\def\qest{q_{\rm est}}
\def\qctr{q_{\rm ctr}}
\def \qdoe{q_{\rm opt}}
\def\Cest{C_{\rm est}}
\def\CinvV{C_{\rm inv}}
\def\Cinv{C_{\rm inv}}
\def\Cmark{C_{\rm mark}}
\def\Cstab{C_{\rm stab}}
\def\Cone{C_1}
\def\Ctwo{C_2}
\def\H{\widetilde{H}}
\def\N{\mathbb{N}}
\def\R{\mathbb{R}}
\def\A{\mathbb{A}}
\def\T{\mathbb{T}}
\def\Z{\mathbb{Z}}
\def\EE{\mathcal{E}}
\def\HH{\mathcal{H}}
\def\KK{\mathcal{K}}
\def\WW{\mathcal{W}}
\def\NN{\mathcal{N}}
\def\RR{\mathcal{R}}
\def\SS{\mathcal{S}}
\def\TT{\mathcal{T}}
\def\XX{\mathcal{X}}
\def\refine{{\tt ref}}
\def\diam{{\rm diam}}
\def\supp{{\rm supp}}
\newcommand{\set}[3][\big]{#1\{#2\,:\,#3#1\}}
\newcommand{\norm}[3][]{#1\|#2#1\|_{#3}}
\def\dual#1#2{\langle#1\,;\,#2\rangle}
\numberwithin{equation}{section}
\numberwithin{figure}{section}
\newtheorem{theorem}{Theorem}[section]
\newtheorem{proposition}[theorem]{Proposition}
\newtheorem{lemma}[theorem]{Lemma}
\newtheorem{algorithm}[theorem]{Algorithm}
\newtheorem{remark}[theorem]{Remark}
\newtheorem{assumption}[theorem]{Assumption}
\renewcommand{\subsection}[1]{\refstepcounter{subsection}\medskip{\bf\thesubsection.~#1.}}
\newcounter{const}
\date{\today}
\keywords{isogeometric analysis, boundary element method, IGABEM, adaptive algorithm, convergence, optimal convergence rates}
\def\MM{\mathcal M}
\def\XX{\mathcal X}
\def\Crel{C_{\rm rel}}
\def\Clin{C_{\rm lin}}
\def\Cstb{C_{\rm sz}}
\def\Chdown{C_{\rm wt}^{-1}}
\def\Chup{C_{\rm wt}}
\def\qlin{q_{\rm lin}}
\def \Cdrel{C_{\rm rel}}
\def \Cref{C_{\rm ref}}
\def \Cmon{C_{\rm mon}}
\def \Cmesh{C_{\rm mesh}}
\title{Optimal convergence\\ for adaptive IGA boundary element methods\\ for weakly-singular integral equations}
\author{Michael Feischl, Gregor Gantner, Alexander Haberl, Dirk Praetorius}
\begin{document}

%\author{Michael Feischl\corref{cor2}}
%\ead{Michael.Feischl@tuwien.ac.at}
%\author{Gregor Gantner\corref{cor1}}
%\ead{Gregor.Gantner@tuwien.ac.at}
%\cortext[cor1]{Corresponding author}
%\author{Alexander Haberl\corref{cor2}}
%\ead{Alexander.Haberl@tuwien.ac.at}
%\author{Dirk Praetorius\corref{cor2}}
%\ead{Dirk.Praetorius@tuwien.ac.at}

%\address{Vienna University of Technology,
% Institute for Analysis and Scientific Computing,
% Wiedner Hauptstra\ss{}e 8-10,
% A-1040 Wien, Austria}

%\subjclass[2000]{65N30, 65N50}

\begin{abstract}
In a recent work~\cite{resigabem}, we analyzed a weighted-residual error estimator for isogeometric boundary element methods in 2D and proposed an adaptive algorithm which steers the local mesh-refinement of the underlying partition as well as the multiplicity of the knots. In the present work, we give a mathematical proof that this algorithm leads to convergence even with optimal algebraic rates. 
Technical contributions include a novel mesh-size function which also monitors the knot multiplicity as well as inverse estimates for NURBS in fractional-order Sobolev norms.
\end{abstract}

%\begin{keyword}
%isogeometric analysis\sep boundary element method\sep a~posteriori error estimate\sep adaptive mesh-refinement\sep optimal convergence
%\end{keyword}

\maketitle

%%%%%%%%%%%%%%%%%%%%%%%%%%%%%%%%%%%%%%%%%%%%%%%%%%%%%%%%%%%%%%%%%%%%%%%%%%%%%%%%
%%%%%%%%%%%%%%%%%%%%%%%%%%%%%%%%%%%%%%%%%%%%%%%%%%%%%%%%%%%%%%%%%%%%%%%%%%%%%%%%%
\section{Introduction} 
%%%%%%%%%%%%%%%%%%%%%%%%%%%%%%%%%%%%%%%%%%%%%%%%%%%%%%%%%%%%%%%%%%%%%%%%%%%%%%%%%
%%%%%%%%%%%%%%%%%%%%%%%%%%%%%%%%%%%%%%%%%%%%%%%%%%%%%%%%%%%%%%%%%%%%%%%%%%%%%%%%

%!TEX root = resigaconv.tex
\subsection{Isogeometric analysis} The central idea of isogeometric analysis (IGA) is to use the same ansatz functions for the discretization of the partial differential equation at hand, as are used for the representation of the problem geometry. Usually, the problem geometry $\Omega$ is represented in CAD by means of non-uniform rational B-splines (NURBS), T-splines, or hierarchical splines. This concept, originally invented in~\cite{hughes2005} for finite element methods (IGAFEM) has proved very fruitful in applications; see also the monograph \cite{bible}. 

Since CAD directly provides a parametrization of the boundary $\partial \Omega$, this makes the boundary element method (BEM) the most attractive numerical scheme, if applicable (i.e., provided that the fundamental solution of the differential operator is explicitly known).
 However, compared 
to the IGAFEM literature, only little is found for isogeometric BEM (IGABEM). 
The latter has first been considered for 2D BEM in \cite{igabem2d} and for 3D BEM in \cite{igabem3d}.
Unlike standard BEM with piecewise polynomials which is well-studied in the literature, cf.~the monographs~\cite{ss,steinbach} and the references therein,  the numerical analysis of IGABEM is widely open. We refer {to~\cite{SBTR,helmholtziga,TM,zechner}} for numerical experiments, {to \cite{zechnerH} for fast IGABEM with $\mathcal{H}$-matrices}, and to~\cite{stokesiga} for some quadrature analysis.
To the best of our knowledge, 
{\sl {\sl a~posteriori}} error estimation for IGABEM, however, has only been considered for simple 2D model problems in the recent own works~\cite{igafaermann,resigabem}. 
%Here, the question is
%how to adapt the known techniques from standard BEM to non-polynomial ansatz functions and how to exploit 
The present work extends the techniques from standard BEM to non-polynomial ansatz functions.
 {The remarkable flexibility of the IGA ansatz functions to manipulate their smoothness properties motivates the development of a new adaptive algorithm which does not only automatically adapt the mesh-width, but also the continuity of the IGA ansatz function to exploit the additional freedomss and the full potential of IGA.
This is the first algorithm which simultaneously steers the resulution and the smoothness of the ansatz functions, and, it  may thus be a first step to a full $hpk$-adaptive algorithm.}

For standard BEM with discontinuous piecewise polynomials, {\sl {\sl a~posteriori}} error estimation and adaptive mesh-refinement are well understood. We refer to~\cite{cmps,cms,meshoneD} for weighted-residual error estimators and to~\cite{arcme,zzbem} for  recent overviews on available {\sl a~posteriori} error estimation strategies. Moreover, optimal convergence of mesh-refining adaptive algorithms has recently been proved for polyhedral boundaries~\cite{part1,part2,fkmp} as well as smooth boundaries~\cite{gantumur}. The  work~\cite{invest} allows to transfer these results to piecewise smooth boundaries; see also the discussion in the review article~\cite{axioms}.

While this work focusses on adaptive IGABEM, adaptive IGAFEM is considered, e.g., in \cite{juettler1,juettler2}.
A rigorous error and convergence analysis in the frame of adaptive IGAFEM is first  found in \cite{buffa} which  proves linear convergence for some adaptive IGAFEM with hierarchical splines for the Poisson equation, and optimal rates are announced for some future work.

\subsection{Model problem} We develop and analyze an adaptive algorithm for the following model problem:  Let $\Omega\subset\R^2$ be a Lipschitz domain with $\diam(\Omega)<1$ and $\Gamma\subseteq \partial\Omega$ be a compact, piecewise smooth part of its boundary with finitely many connected components.
%; see Section~\ref{section:preliminaries} for details. 
We consider the weakly-singular boundary integral equation 
\begin{align}\label{eq:strong}
 V\phi(x):=-\frac{1}{2\pi}\int_\Gamma\log|x-y|\,\phi(y)\,dy = f(x)
 \quad\text{for all }x\in\Gamma_{},
\end{align}
where the right-hand side $f$ is given and the density $\phi$ is sought. 
{We note that \eqref{eq:strong} for $\Gamma=\partial\Omega$ is equivalent to the Laplace-Dirichlet problem
\begin{align}
-\Delta u=0\text{ in }\Omega\quad\text{with } u=f\text{ on }\Gamma,\quad\text{where }u:=V\phi.
\end{align}}
To approximate $\phi$, we employ a Galerkin boundary element method (BEM) with ansatz spaces consisting of $p$-th order NURBS. The convergence order for uniform partitions of $\Gamma$ is usually suboptimal, since the unknown density $\phi$ may exhibit singularities, which are stronger than the singularities in the geometry. In~\cite{resigabem}, we analyzed a weighted-residual error estimator and proposed an adaptive algorithm which uses this {\sl a~posteriori} error information to steer the $h$-refinement of the underlying partition as well as the local smoothness of the NURBS across the nodes of the adaptively refined partitions. It reflects the fact that it is {\sl a~priori} unknown, where the singular and smooth parts of the density $\phi$ are located and where approximation by nonsmooth resp. smooth functions is required. In~\cite{resigabem}, we observed experimentally that the proposed algorithm detects singularities and possible jumps of $\phi$ and leads to optimal convergence behavior. In particular, we observed that the proposed adaptive strategy is also superior to adaptive BEM with discontinuous piecewise polynomials in the sense that our adaptive NURBS discretization requires less degrees of freedom to reach a prescribed accuracy.

%\footnote{\bf\color{red}In der Einleitung fehlt noch Allgemeines zu IGA, insbesondere zu IGABEM, sowie das Paper von Buffa zur Konvergenz von IGAFEM.}

\subsection{Contributions}
We  prove that the adaptive algorithm from~\cite{resigabem} is rate optimal in the sense of~\cite{axioms}:
Let $\mu_\ell$ be the weighted-residual error estimator in the $\ell$-th step of the adaptive algorithm. First, the adaptive algorithm leads to linear convergence of the error estimator, i.e., $\mu_{\ell+n} \le Cq^n\mu_\ell$ for all $\ell,n\in\N_0$ and some independent constants $C>0$ and $0<q<1$. Moreover, for sufficiently small marking parameters, i.e. agressive adaptive refinement, the estimator decays even with the optimal algebraic convergence rate. Here, the important innovation is that the adaptive algorithm does not only steer the local refinement of the underlying partition (as is the case in the available  literature, e.g., \cite{axioms,part1,part2,fkmp,gantumur}), but also the multiplicity of the knots. 
{In particular, the present work is  the first available optimality result for adaptive algorithms in the frame of isogeometric analysis.} Additionally, we can prove at least plain convergence if the adaptive algorithm is driven by the Faermann estimator $\eta_\ell$ analyzed in \cite{igafaermann} instead of the weighted-residual estimator $\mu_\ell$, which generalizes a corresponding result for standard adaptive BEM  \cite{mitscha}.

Technical contributions of general interest include a novel mesh-size function $h\in L^\infty(\Gamma)$ which is locally equivalent to the element length (i.e., $h|_T \simeq {\rm length}(T)$ for all elements $T$), but also accounts for the knot multiplicity. Moreover, for $0<\sigma<1$we prove a local inverse estimate $\norm{h^{\sigma}\Psi}{L^2(\Gamma)} \le C\,\norm{\Psi}{\H^{-\sigma}(\Gamma)}$ for NURBS on locally refined meshes. 
Similar estimates for piecewise polynomials are shown in \cite{inversefaermann,inversesauter,georgoulis}, while \cite{approximation} considers NURBS but integer-order Sobolev norms only. %Finally, we provide an $L^2(\Gamma)$-stable Scott-Zhang-type quasi-interpolation operator for NURBS. 

Throughout, all results apply for piecewise smooth parametrizations $\gamma$ of $\Gamma$ and discrete NURBS spaces. In particular, the analysis thus covers the NURBS ansatz used for IGABEM, where the same ansatz functions are used for the discretization of the integral equation and for the resolution of the problem geometry,  as well as spline spaces and even piecewise polynomials on the piecewise smooth boundary $\Gamma$ which can be understood as special NURBS.

\subsection {Outline}
The remainder of this work is organized as follows: Section~\ref{section:preliminaries} fixes the notation and provides the necessary preliminaries. This includes, e.g., the involved Sobolev spaces (Section~\ref{section:sobolev}), the functional analytic setting of the weakly-singular integral equation (Section~\ref{section:weaksing}), the assumptions on the parametrization of the boundary $\Gamma$ (Section~\ref{subsec:boundary parametrization}), the discretization of the boundary (Section~\ref{section:boundary:discrete}), the mesh-refinement strategy (Section~\ref{section:mesh-refinement}), B-splines and NURBS (Section~\ref{subsec:splines}), and the IGABEM ansatz spaces (Section~\ref{section:igabem}). Section~\ref{section:algorithm} states our adaptive algorithm (Algorithm~\ref{the algorithm}) from~\cite{resigabem} and formulates the main theorems on linear convergence with optimal rates  for the weighted-residual estimator $\mu_\ell$ (Theorem~\ref{thm:main}) and on plain convergence for the Faermann estimator $\eta_\ell$ (Theorem~\ref{thm:faermann}). 
The linear convergence for the $\mu_\ell$-driven algorithm is proved in Section~\ref{section:rlinear}. The proof requires an inverse estimate for NURBS in a fractional-order Sobolev norm  (Proposition~\ref{prop:inverse estimate}) as well as a novel mesh-size function for B-spline and NURBS discretizations (Proposition~\ref{lem:mesh function}) which might be of independent interest. The proof of optimal convergence behaviour is given in Section~\ref{section:optimal}. 
In Section~\ref{section:faermann}, we show convergence for the $\eta_\ell$-driven algorithm.

For the empirical verification of the optimal convergence behavior of Algorithm~\ref{the algorithm} for $\mu_\ell$- as well as $\eta_\ell$-driven adaptivity and a comparison of IGABEM and standard BEM with discontinuous piecewise polynomials, we refer to the numerous numerical experiments in our preceding work~\cite{resigabem}.

\section{Preliminaries}
\label{section:preliminaries}

%%%%%%%%%%%%%%%%%%%%%%%%%%%%%%%%%%%%%%%%%%%%%%%%%%%%%%%%%%%%%%%%%%%%%%%%%%%%%%%%%
%%%%%%%%%%%%%%%%%%%%%%%%%%%%%%%%%%%%%%%%%%%%%%%%%%%%%%%%%%%%%%%%%%%%%%%%%%%%%%%%

% !TEX encoding = MacOSRoman
%!TEX root = resigaconv.tex
%*** PRELIMINARIES

%%%%%%%%%%%%%%%%%%%%%%%%%%%%%%%%%%%%%%%%%%%%%%%%%%%%%%%%%%%%%%%%%%%%%%%%%%%%%%%%
\subsection{General notation}
%%%%%%%%%%%%%%%%%%%%%%%%%%%%%%%%%%%%%%%%%%%%%%%%%%%%%%%%%%%%%%%%%%%%%%%%%%%%%%%%
Throughout, $|\cdot|$ denotes the absolute value of scalars, the Euclidean norm of vectors in $\R^2$, the measure of a set in $\R$ (e.g., the length of an interval), or the arclength of a curve in $\R^{2}$.
The respective meaning will be clear from the context.
We write $A\lesssim B$ to abbreviate $A\le cB$ with some generic constant $c>0$ which is clear from the context.
Moreover, $A\simeq B$ abbreviates $A\lesssim B\lesssim A$. 
Throughout, mesh-related quantities have the same index, e.g., $\NN_\star$ is the set of nodes of the partition $\TT_\star$, and $h_\star$ is the corresponding local mesh-width etc. The analogous notation is used for partitions $\TT_+$ resp.\ $\TT_\ell$ etc.

\def\Cgamma{C_\Gamma}
%%%%%%%%%%%%%%%%%%%%%%%%%%%%%%%%%%%%%%%%%%%%%%%%%%%%%%%%%%%%%%%%%%%%%%%%%%%%%%%%
\subsection{Sobolev spaces}
\label{section:sobolev}
%%%%%%%%%%%%%%%%%%%%%%%%%%%%%%%%%%%%%%%%%%%%%%%%%%%%%%%%%%%%%%%%%%%%%%%%%%%%%%%%
For any measurable subset ${\Gamma_0}\subseteq\Gamma$, let $L^2({\Gamma_0})$ denote
the Lebesgue space of all square integrable functions which is associated with the norm
$\norm{u}{L^2({\Gamma_0})}^2:=\int_{\Gamma_0} |u(x)|^2\,dx$.
%If $\nu$ is the Lebesgue measure, we simply write $L^2({\Gamma_0})$.
We define  for any $0<\sigma\le 1$ the Hilbert space
\begin{align}
 H^{\sigma}({\Gamma_0}) := \set{u\in L^2({\Gamma_0})}{\norm{u}{H^{\sigma}({\Gamma_0})}<\infty},
\end{align}
associated with the Sobolev-Slobodeckij norm
\begin{align}\label{eq:SS-norm}
 \norm{u}{H^{\sigma}({\Gamma_0})}^2
 := \norm{u}{L^2({\Gamma_0})}^2
 + |u|_{H^{\sigma}({\Gamma_0})}^2,
 \end{align}
 {with
 \begin{align}
 |u|_{H^{\sigma}({\Gamma_0})}^2 := \begin{cases} \int_{\Gamma_0}\int_{\Gamma_0}\frac{|u(x)-u(y)|^2}{|x-y|^{1+2\sigma}}\,dy\,dx, & \text{for }0<\sigma<1\\\norm{\partial_\Gamma u}{L^2(\Gamma_0)},&\text{for }\sigma=1,\end{cases}
 \end{align}
 where $\partial_\gamma$ denotes the arclength derivative.
 }
For finite intervals $I\subseteq \R$, we use analogous definitions.
By $\H^{-\sigma}({\Gamma_0})$, we denote the dual space of $H^{\sigma}({\Gamma_0})$, where duality
is understood with respect to the extended $L^2({\Gamma_0})$-scalar product, i.e.,
\begin{align}
 \dual{u}{\phi}_{\Gamma_0} = \int_{\Gamma_0} u(x)\phi(x)\,dx
 \quad\text{for all }u\in H^{\sigma}({\Gamma_0})
 \text{ and }\phi\in L^2({\Gamma_0}).
\end{align}
{We note that $H^{\sigma}(\Gamma)\subset L^2(\Gamma)\subset \H^{-\sigma}(\Gamma)$ form a Gelfand triple and all inclusions are dense and compact.}
Amongst other equivalent definitions of $H^{\sigma}({\Gamma_0})$ are 
%the characterization
%as trace space of functions {in} $H^1(\Omega)$ as well as equivalent 
for example
interpolation techniques.
All these definitions provide the same space of functions but different norms, where
norm equivalence constants depend only  on ${\Gamma_0}$; see, e.g., the monographs~\cite{hw,mclean}
and the references therein.
Throughout our proofs, we shall use the Sobolev-Slobodeckij
norm~\eqref{eq:SS-norm}, since it is  numerically computable.

%%%%%%%%%%%%%%%%%%%%%%%%%%%%%%%%%%%%%%%%%%%%%%%%%%%%%%%%%%%%%%%%%%%%%%%%%%%%%%%%
\subsection{{Weakly-singular integral equation}}
\label{section:weaksing}
%%%%%%%%%%%%%%%%%%%%%%%%%%%%%%%%%%%%%%%%%%%%%%%%%%%%%%%%%%%%%%%%%%%%%%%%%%%%%%%%
It is known~\cite{hw,mclean} that the weakly-singular integral operator $V:\H^{-1/2}(\Gamma)\to H^{1/2}(\Gamma)$ from~\eqref{eq:strong} is a symmetric and elliptic isomorphism if $\diam(\Omega)<1$ which can always be achieved by scaling. For a given right-hand side $f\in H^{1/2}(\Gamma)$,
the strong form~\eqref{eq:strong} is thus equivalently stated by 
\begin{align}\label{eq:weak}
\dual{V\phi}{\psi}_{\Gamma}
 = \dual{f}{\psi}_{\Gamma}
 \quad\text{for all }\psi\in\H^{-1/2}(\Gamma_{}),
\end{align}
and the left-hand side defines an equivalent scalar product on $\H^{-1/2}(\Gamma)$.
In particular, the  Lax-Milgram lemma proves existence and uniqueness of the solution $\phi\in\H^{-1/2}(\Gamma)$. Additionally, $V:L^2(\Gamma)\to H^1(\Gamma)$ is well-defined, linear,  and continuous.

In the Galerkin boundary element method, the test
space $\H^{-1/2}(\Gamma_{})$ is replaced by some discrete subspace  $\XX_{\star}\subset {L^{2}(\Gamma_{})}\subset\H^{-1/2}(\Gamma_{})$.
Again, the Lax-Milgram lemma guarantees existence and uniqueness of the solution
$\Phi_\star\in\XX_\star$ of the discrete variational formulation 
\begin{align}\label{eq:discrete}
\dual{V\Phi_\star}{\Psi_\star}_{\Gamma}
 = \dual{f}{\Psi_\star}_{\Gamma}
 \quad\text{for all }\Psi_\star\in\XX_\star.
\end{align}
Below, we shall assume that $\XX_\star$ is linked to a partition $\TT_\star$ of $\Gamma$ into a set of connected segments.

%%%%%%%%%%%%%%%%%%%%%%%%%%%%%%%%%%%%%%%%%%%%%%%%%%%%%%%%%%%%%%%%%%%%%%%%%%%%%%%%
\subsection{Boundary parametrization}
\label{subsec:boundary parametrization}
%%%%%%%%%%%%%%%%%%%%%%%%%%%%%%%%%%%%%%%%%%%%%%%%%%%%%%%%%%%%%%%%%%%%%%%%%%%%%%%%
Let $\Gamma = \bigcup_i\Gamma_i$ be decomposed into its finitely many connected
components $\Gamma_i$. Since the $\Gamma_i$ are compact and piecewise
smooth as well, it holds
\begin{align*}
% \sum_{m} \|u\|^2_{H^{1/2}(\Gamma_m)}
% \le 
 \|u\|^2_{H^{1/2}(\Gamma)}
 = \sum_{i}\|u\|^2_{H^{1/2}(\Gamma_i)} 
 + \sum_{{i,j}\atop{i\neq j}} \int_{\Gamma_i}\int_{\Gamma_j}\frac{|u(x)-u(y)|^2}{|x-y|^2}\,dy\,dx
 \simeq \sum_{i} \|u\|^2_{H^{1/2}(\Gamma_i)};
\end{align*}
see, e.g., \cite[Section 2.2]{igafaermann}.
The usual piecewise polynomial and NURBS basis functions have connected support
and are hence supported by some \emph{single} $\Gamma_i$ each. Without loss of generality
and for the ease of presentation, 
we may therefore assume {throughout} that $\Gamma$ is connected. All results of this
work remain valid for non-connected $\Gamma$.

We assume that either $\Gamma=\partial\Omega$ is parametrized by a closed continuous and
piecewise {two times} continuously differentiable path $\gamma:[a,b]\to\Gamma$ such
that the restriction $\gamma|_{[a,b)}$ is even bijective, or that $\Gamma\subsetneqq\partial\Omega$ is parametrized by a bijective continuous and piecewise two times continuously differentiable path $\gamma:[a,b]\to\Gamma$.  In the first case, we speak of \textit{closed} $\Gamma=\partial\Omega$, whereas the second case is referred to as \textit{open} $\Gamma\subsetneqq\partial\Omega$.

For closed $\Gamma=\partial\Omega$, we denote the $(b-a)$-periodic extension to $\R$ also by $\gamma$. 
For the left and right derivative of $\gamma$, we assume that {$\gamma^{\prime_\ell}(t)\neq 0$ for $t\in(a,b]$ and $\gamma^{\prime_r}(t)\neq 0$  for $t\in [a,b)$.}
Moreover we assume that $\gamma^{\prime_\ell}(t)
+c\gamma^{\prime_r}(t)\neq0$ for all $c>0$ {and $t\in[a,b]$ resp. $t\in(a,b)$.} 
Finally, let $\gamma_L:[0,L]\to\Gamma$ denote the arclength parametrization, i.e.,
$|\gamma_L^{\prime_\ell}(t)| = 1 = |\gamma_L^{\prime_r}(t)|$, and its periodic extension. Elementary
differential geometry yields bi-Lipschitz continuity
\begin{align}\label{eq:bi-Lipschitz}
 \Cgamma^{-1} \le \frac{|\gamma_L(s)-\gamma_L(t)|}{|s-t|}\le\Cgamma
 \quad\text{for }s,t\in\R, {\text{ with }\begin{cases}
 |s-t|\le \frac{3}{4}\,L, \text{ for closed }\Gamma,\\
  s\neq t\in [0,L], \text{ for open }\Gamma,
\end{cases}}
\end{align}
where $\Cgamma>0$ depends only on $\Gamma$.
A proof is given in \cite[Lemma 2.1]{diplarbeit} for closed $\Gamma=\partial\Omega$. 
For open $\Gamma\subsetneqq\partial\Omega$, the proof is even simpler. 

Let $I\subseteq[a,b]$.
If $\Gamma=\partial\Omega$ is closed and $|I|\le \frac{3}{4} L$ resp.\ if $\Gamma\subsetneqq\partial\Omega$ is open, the bi-Lipschitz continuity~\eqref{eq:bi-Lipschitz} implies
\begin{align}\label{eq:equivalent Hsnorm} 
\Cgamma^{-1}|u\circ\gamma_{L}|_{H^{1/2}(I)}\leq |u|_{H^{1/2}(\gamma_L(I))}\leq \Cgamma|u\circ\gamma_{L}|_{H^{1/2}(I)}.
\end{align}

%%%%%%%%%%%%%%%%%%%%%%%%%%%%%%%%%%%%%%%%%%%%%%%%%%%%%%%%%%%%%%%%%%%%%%%%%%%%%%%%
\subsection{Boundary discretization}
\label{section:boundary:discrete}
%%%%%%%%%%%%%%%%%%%%%%%%%%%%%%%%%%%%%%%%%%%%%%%%%%%%%%%%%%%%%%%%%%%%%%%%%%%%%%%%
In the following, we describe the different quantities which define the discretization.

{\bf Nodes $\boldsymbol{z_j=\gamma(\check{z}_j)\in\mathcal{N}_\star}$.}\quad Let $\mathcal{N}_\star:=\set{z_j}{j=1,\dots,n}$ and $z_0:=z_n$ for closed $\Gamma=\partial\Omega$ resp.\ $\mathcal{N}_\star:=\set{z_j}{j=0,\dots,n}$ for open $\Gamma\subsetneqq \partial\Omega$ be a set of nodes. We suppose that $z_j=\gamma(\check{z}_j)$ for some $\check{z}_j\in[a,b]$ with
$a=\check{z}_0<\check{z}_1<\check{z}_2<\dots<\check{z}_n=b$ such that 
$\gamma|_{[\check{z}_{j-1},\check{z}_j]}\in C^2([\check{z}_{j-1},\check{z}_j])$.

{\bf Multiplicity $\boldsymbol{\#z_j}$ and knots $\boldsymbol{\KK_\star},\boldsymbol{\check{\KK}_\star}$.}\quad
Let $p\in\N_0$ be some fixed polynomial order.
{Each node $z_j$ has a multiplicity $\#z_j\in\{1,2\dots, p+1\}$ with $\#{z}_{0}=\#z_{n}=p+1$}.
This induces knots 
\begin{align}
\KK_\star=(\underbrace{z_k,\dots,z_k}_{\#z_k-\text{times}},\dots,\underbrace{z_n,\dots,z_n}_{\#z_n-\text{times}}),
\end{align} 
with $k=1$ resp. $k=0$ and corresponding knots $\check{\KK}_\star:=\gamma|_{(a,b]}^{-1}(\KK_\star)$ resp. $\check{\KK}_\star:=\gamma^{-1}(\KK_\star)$ on the parameter domain $[a,b]$.

{\bf Elements, partition $\boldsymbol{\mathcal{T}_\star}$, and $\boldsymbol{[T]}$, $\boldsymbol{[\mathcal{T}_\star}]$.} \quad
Let $\mathcal{T}_\star=\{T_1,\dots,T_n\}$ be a partition of $\Gamma$ into compact and connected segments $T_j=\gamma(\check{T}_j)$ with $\check{T}_j=[\check{z}_{j-1},\check{z}_j]$.
We define
\begin{align}
[\TT_\star]:=\set{[T]}{T\in \TT_\star}
\quad\text{with}\quad
[T]:=(T,\#z_{T,1},\#z_{T,2}),
\end{align}
where $z_{T,1}=z_{j-1}$ and $z_{T,2}=z_j$ are the two nodes of $T=T_j$.

{\bf Local mesh-sizes $\boldsymbol{h_{\star,T}}$, $\boldsymbol{\check{h}_{\star,T}}$ and $\boldsymbol{h_\star}$, $\boldsymbol{\check{h}_\star}$.}\quad
The arclength of each element $T\in \mathcal{T}_\star$ is denoted by $h_{\star,T}$.
We define the local mesh-width function $h_\star\in L^\infty(\Gamma)$ by $h_\star|_T=h_{\star,T}$.
Additionally, we define for each element $T\in \mathcal{T}_\star$ its length $\check{h}_{\star,T}:=|\gamma^{-1}(T)|$ with respect to the parameter domain $[a,b]$.
This gives rise to $\check{h}_\star\in L^\infty(\Gamma)$ with $\check{h}_\star|_T=\check{h}_{\star,T}$. 
Note that the lengths $h_{\star,T}$ and $\check{h}_{\star,T}$ of an element $T$ are equivalent, where the equivalence constants depend only on $\gamma$.

{\bf Local mesh-ratios $ \boldsymbol{\check{\kappa}_\star}$.}\quad
We define the {local mesh-ratio} by
  %\textit{shape regularity constant} 
\begin{align}\label{eq:meshratio}
%\kappa_\star&:=\max\set{h_{\star,T}/h_{\star,T'}}{T,T'\in\mathcal{T}_\star\text{ with } T\cap T'\neq \emptyset},\\
\check{\kappa}_\star&:=\max\set{\check{h}_{\star,T}/\check{h}_{\star,T'}}{{T},{T}'\in\TT_\star \text{ with }  T\cap T'\neq \emptyset}.
\end{align}
%Again, $\kappa_\star\simeq\check{\kappa}_\star$, where the hidden equivalence constants depend only on $\gamma$.
%We often deal with two different meshes $[\TT]$ and $[\TT_\star']$. For $[\TT_\star']$ the notation is adopted analogously, we write for example $\NN_\star'$ for the nodes and $h_\star'$ for the mesh size function.
%\subsection{Parameter domain discretization}
%Given the parametrization $\gamma:[a,b]\to \Gamma$, the discretization $\TT_\ell$ induces a discretization $\check{\TT}_\ell=\{\check{T}_1,\dots,\check{T}_n\}$ on the parameter domain $[a,b]$.
%Let $a=\check{z}_0<\check{z}_1<\dots<\check{z}_n$ be the endpoints of the elements of $\check{\TT}_\ell$. 
%We assume $\check{T}_j=[\check{z}_{j-1},\check{z}_j]$, $\gamma(\check{T_j})=T_j$ and $\gamma(\check{z}_j)=x_j$. 
%We define $\check{\mathcal{N}}_\ell:=\set{\check{z}_j}{j=1,\dots,n}$ for closed $\Gamma=\partial\Omega$, and $\check{\mathcal{N}}_\ell:=\set{\check{z}_j}{j=0,\dots,n}$ for open $\Gamma\subsetneqq \partial\Omega$.
%The length of each element $\check{T}\in \check{\TT}_h$ is denoted by $h_{\check{T}}$.
%Moreover, we define the \textit{shape regularity constant} on $[a,b]$ as 
%\begin{align*}
%\kappa(\check{\mathcal{T}_h})&:=\max\Big(\set{h_{\check{T}}/h_{\check{T}'}}{\check{T},\check{T}'\in\check{\mathcal{T}_h}, \gamma(\check{T})\cap \gamma(\check{T}')\neq \emptyset}\Big).
%\end{align*}

{\bf Patches $\boldsymbol{\omega_\star(z)}$, $\boldsymbol{\omega_\star(U)}$, $\boldsymbol{\omega_\star(\mathcal{U})}$,  {and $\boldsymbol{\bigcup\mathcal U}$}.}
For each set $U\subseteq\Gamma$, we  {inductively define for $m\in\N_0$}
%\footnote{\bf\color{red}Wir sollten noch Bilder zu den Patches machen!}
{\begin{align*}
 \omega_\star^m(U) :=\begin{cases} U\quad&\text{if }m=0,\\
 \omega_\star(U):= \bigcup\set{T\in \TT_\star}{T\cap U\neq \emptyset}\quad&\text{if }m=1,\\
 \omega_\star(\omega_\star^{m-1}(U)) \quad&\text{if }m>1.\end{cases}
\end{align*}}
For nodes $z\in\Gamma$, we abbreviate $\omega_\star(z)=:\omega_\star(\{z\}).$
{For each set $\mathcal{U}\subseteq[\TT_\star]$, we define
\begin{align*}
 \bigcup\mathcal U := \bigcup\set{T\in\TT_\star}{[T]\in\mathcal U},
\end{align*}
and
\begin{align*}
 \omega_\star^m(\mathcal{U}) := \omega_\star^m(\bigcup\mathcal{U}).
\end{align*}}
%{\bf Short-hand notation .}\quad
%For each set $\mathcal{U}\subseteq[\TT_\star]$, we define
%%We define the \emph{patch function}
%\begin{align}
%\omega_\star:\mathcal{P}(\Gamma)\to \Gamma, \quad \omega_\star(\Gamma_0):= \bigcup\set{T\in \TT_\star}{T\cap\Gamma_0\neq \emptyset}.
%\end{align}
%{For some $z\in\Gamma$, we abbreviate $\omega_\star(z)=:\omega_\star(\{z\}).$}
\begin{figure}[t] 
\psfrag{pacman (Section 5.3)}[c][c]{}%{\footnotesize pacman (Section~\ref{subsec:singular sol})}
\psfrag{z}[r][r]{z}
\psfrag{T}[r][r]{T}
\psfrag{45}[r][r]{\tiny $45^\circ$}
\begin{center}
\includegraphics[width=0.5\textwidth]{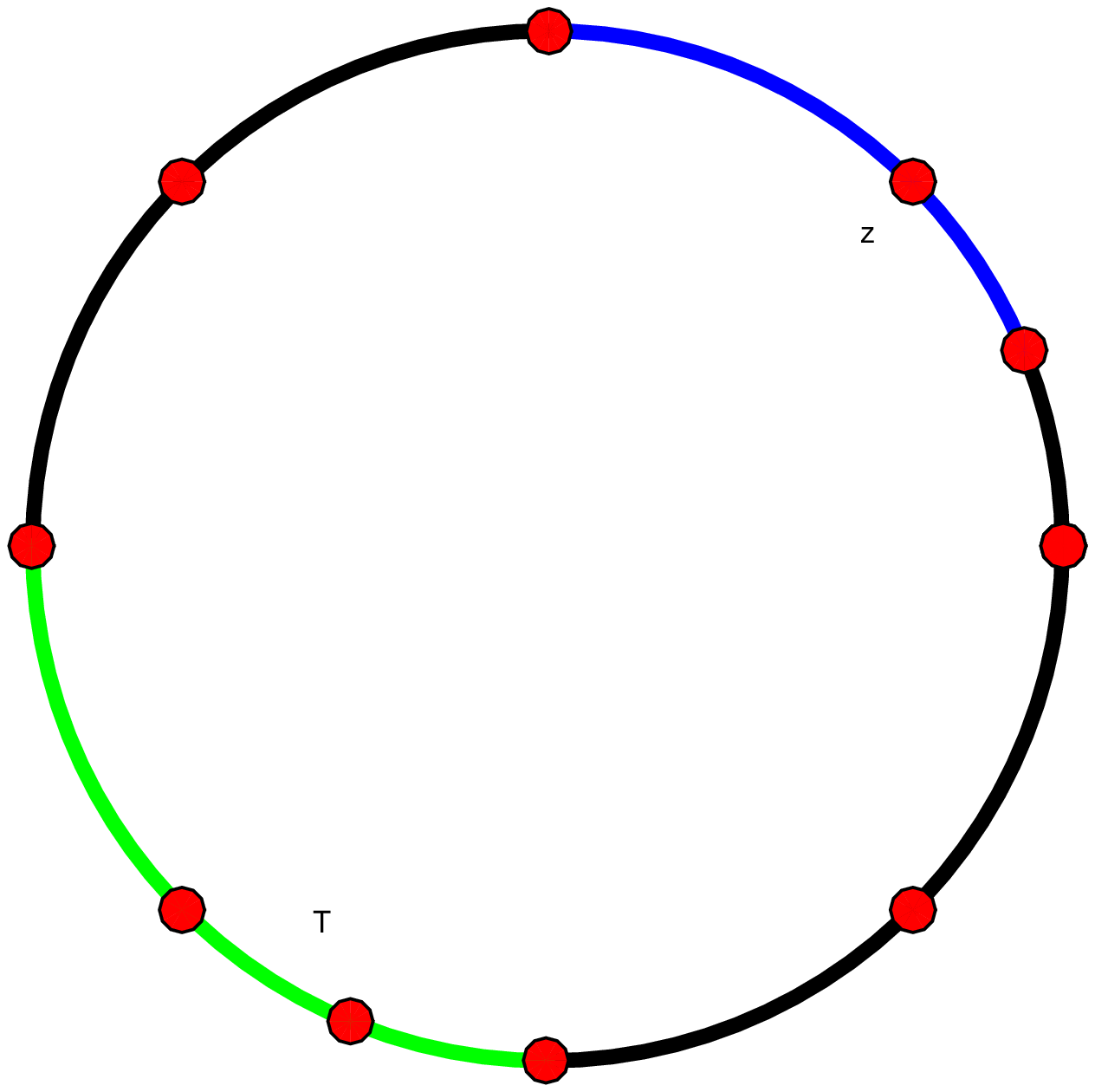}
\end{center}
\caption{The patch $\omega_\star(z)$ of some node $z\in\NN_\star$ resp. the patch $\omega_\star(T)$ are illustrated in blue resp. green.} 
\label{fig:patches}
\end{figure}

%%%%%%%%%%%%%%%%%%%%%%%%%%%%%%%%%%%%%%%%%%%%%%%%%%%%%%%%%%%%%%%%%%%%%%%%%%%%%%%%
\subsection{Mesh-refinement}
\label{section:mesh-refinement}
%%%%%%%%%%%%%%%%%%%%%%%%%%%%%%%%%%%%%%%%%%%%%%%%%%%%%%%%%%%%%%%%%%%%%%%%%%%%%%%%
Suppose that we are given a deterministic mesh-refinement strategy $\refine(\cdot)$ such 
that, for each mesh $[\TT_{\star}]$ and an arbitrary set of marked nodes  $\MM_\star\subseteq \NN_\star$, the application $[\TT_{+}] := \refine([\TT_\star],\MM_\star)$ provides a mesh in the sense of Section~\ref{section:boundary:discrete} such that, first, the marked nodes belong to the union of the refined elements, i.e., 
$\MM_\star\subset\bigcup([\TT_\star]\setminus [\TT_{+}])$, and, second, the knots $\KK_\star$ form a subsequence of the knots $\KK_{+}$.
The latter implies the estimate
\begin{align}
|[\TT_\star]\setminus [\TT_{+}]|\le 2(|\KK_{+}|-|\KK_\star|),
\end{align}
{since $[\TT_\star]\setminus[\TT_+]$ is the set of all elements in which a new knot is inserted and one new knot can be inserted in at most $2$ elements of the old mesh, i.e., at the intersection of $2$ elements.}
%We also demand that the nodes are in the union of the non-refined elements, 
%\begin{align}
%\MM_\star\subset \pi_1(\bigcup [\TT_\star]\setminus [\TT_{+}])
%\end{align}
%with the projection $\pi_1$ on the first component.
%We assume that $\KK_{+}$ form again admissible knots, i.e. $\#z\leq p+1$ for each knot $z$ and $\#z=p+1$ if $z$ is the start resp. end point of some open $\Gamma\subsetneqq \partial\Omega$.
%These knots induce some mesh $\TT_{+}$.
%We denote $[\TT_{+}]$ as $\refine([\TT_\star],\MM_\star)$.

We write $[\TT_{+}]\in\refine([\TT_\star])$, if there exist finitely many meshes $[\TT_1],\dots,[\TT_\ell]$ and subsets $\MM_{j}\subseteq\NN_{j}$ of the corresponding nodes such that $[\TT_\star] = [\TT_1]$, $[\TT_{+}]=[\TT_\ell]$, and $[\TT_{j}] = \refine([\TT_{j-1}],\MM_{j-1})$ for all $j=2,\dots,\ell$, where we formally allow
$m=1$, i.e., $[\TT_\star]=[\TT_{1}]\in\refine([\TT_\star])$. 

%Throughout the work, we consider some fixed start mesh $[\TT_0]$  such that 
%\begin{align}\label{eq:h small}
%\|h(\TT_0)\|_\infty \le |\Gamma|/4,
%\end{align}
%if the part of the boundary $\Gamma=\partial\Omega$ is closed.

For the proof of our main result, we need the following assumptions on $\refine(\cdot)$.
%In Section~\ref{section:algorithm}, we prove that the refinement strategy used in \cite{resigabem} satisfies these properties.
%It was inspired by an analogous version \cite[Section 2.4]{axioms} for mesh elements instead of knots. 
%{In Section~\ref{section:refinement}, we prove that the refinement strategy used in~\cite{resigabem} satisfies these properties.}
\begin{assumption}\label{ass:mesh assumption}
{For an arbitrary initial mesh $[\TT_0]$ and $[\T]:=\refine([\TT_0])$, we assume that the mesh-refinement strategy satisfies the properties {\rm(M1)--(M3)}:}
\begin{itemize}
\item[{\rm (M1)}] There exists a constant $\check{\kappa}_{\max}\ge 1$ such that the {local mesh-ratios}~\eqref{eq:meshratio} are uniformly bounded
\begin{align}\label{eq:kappamax}
\check{\kappa}_\star\le \check{\kappa}_{\max}\quad\text{for all }[\TT_\star]\in [\T].
\end{align}
\item[{\rm (M2)}]
For all $[\TT_\star],[\TT_{+}]\in [\T]$, there is a common refinement $[\TT_\star\oplus \TT_{+}]\in\refine([\TT_\star])\cap\refine([\TT_{+}])$ such that the  knots $\KK_\star\oplus \KK_{+}$ of $[\TT_\star\oplus \TT_{+}]$ satisfy the overlay estimate
\begin{align}
|\KK_\star\oplus \KK_{+}|\le |\KK_\star|+|\KK_{+}|-|\KK_0|.
\end{align}
\item[{\rm (M3)}]
Each sequence $[\TT_\ell]\in [\T]$ of meshes generated by successive mesh-refinement, i.e., $[\TT_j] = \refine([\TT_{j-1}],\MM_{j-1})$ for all $j \in \N$ and arbitrary $\MM_j \subseteq  \NN_j,$  satisfies
\begin{align}
|\KK_\ell|-|\KK_0|\le \Cmesh \sum_{j=0}^{\ell-1} |\MM_j|\quad \text{for }\ell\in \N,
\end{align}
where $\Cmesh>0$ depends only on $[\TT_0]$.
\end{itemize}
\end{assumption}

These assumptions are especially satisfied for pure $h$-refinement based on local bisection \cite{meshoneD} as well as for   the concrete strategy used in \cite{igafaermann} and \cite{resigabem}.
The latter strategy looks as follows: Let  $[\TT_\star]\in[\T]$. 
Let $\MM_\star\subseteq \NN_\star$ be a set of marked nodes.
To get the refined mesh $[\TT_{+}]:=\refine([\TT_\star],\MM_\star)$, we proceed as follows:
\begin{enumerate}[\rm(i)]
\item If both nodes of an element $T\in \TT_\star$ belong to $\MM_\star$, the element $T$ will be marked.
\item For all other nodes in $\MM_\star$, the multiplicity will be increased if it is less or equal to $p+1$, otherwise the elements which contain one of these nodes $z\in \MM_\star$, will be marked.
{\item Recursively, mark further elements $T'\in\TT_\star$ for refinement if there exists a marked element $T\in\TT_\star$ with $T\cap T'\neq\emptyset$ and $\check{h}_{\star,T'}>\check{\kappa}_0 \check{h}_{\star,T}$.
\item Refine all marked elements $T\in\TT_\star$ by bisection and hence obtain $[\TT_+]$.}
\end{enumerate}

{According to \cite{meshoneD}, the proposed recursion in step (iii) terminates and the generated partition $\TT_+$ guarantees (M1) with $\check{\kappa}_{\max}=2\check{\kappa}_0$.
The following proposition shows that also the assumptions (M2)--(M3) are satisfied.}

\begin{proposition}
The proposed refinement strategy $\refine(\cdot)$ used in \cite{igafaermann,resigabem} satisfies Assumption~\ref{ass:mesh assumption}, where  $\check{\kappa}_{\max}=2\check{\kappa}_0$ and $\Cmesh$  depends only on the initial partition of the parameter domain, i.e., $\TT_0$ transformed onto $[a,b]$.
\end{proposition}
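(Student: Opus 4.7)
The plan is to verify each of the three properties (M1)--(M3) in turn, with (M1) essentially already in the literature, (M2) following from the binary-tree structure of bisection together with monotonicity of knot multiplicities, and (M3) being the main work.

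For (M1), the closure loop in step (iii) is precisely the one-dimensional analog of the mesh-closure procedure analyzed in \cite{meshoneD}. Since each bisection halves the element length, any two neighboring elements in $\TT_+$ can differ in bisection level by at most one as long as the closure enforces $\check{h}_{+,T'}\le \check{\kappa}_0\check{h}_{+,T}$ on the pre-closure mesh; after one additional bisection caused by (iii) the ratio at neighbors is bounded by $2\check{\kappa}_0$. I would just invoke \cite{meshoneD} for the termination and this ratio bound, setting $\check{\kappa}_{\max}:=2\check{\kappa}_0$.

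For (M2), I would construct the overlay explicitly on the parameter domain. Denote by $\#_\star z$, $\#_+ z$, $\#_0 z$ the multiplicity of a node $z$ in $\TT_\star$, $\TT_+$, $\TT_0$ respectively (taken to be zero if absent). Because knots can only be inserted and multiplicities can only increase, $\#_\star z\ge \#_0 z$ and $\#_+ z\ge \#_0 z$ whenever $z\in\NN_0$. Define $[\TT_\star\oplus\TT_+]$ as the mesh whose nodes are $\NN_\star\cup\NN_+$ with multiplicity $\max(\#_\star z,\#_+ z)$ at each node. Since the bisection-generated nodes form a binary tree refining $\NN_0$, the union is again obtainable by successive bisection and multiplicity increase from either parent, so $[\TT_\star\oplus\TT_+]\in\refine([\TT_\star])\cap\refine([\TT_+])$. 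The counting then uses the elementary inequality
\begin{align*}
\max(\#_\star z,\#_+ z)\le \#_\star z+\#_+ z-\#_0 z,
\end{align*}
which holds because $\#_0 z\le \min(\#_\star z,\#_+ z)$ (with $\#_0 z:=0$ if $z\notin\NN_0$). Summing over $z\in\NN_\star\cup\NN_+$ gives $|\KK_\star\oplus\KK_+|\le |\KK_\star|+|\KK_+|-|\KK_0|$.

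For (M3), which I expect to be the main obstacle, I would separately account for the three possible effects of marking a node: (a) direct element bisection via step (i), contributing at most one new knot per marked node; (b) multiplicity increase via step (ii), contributing exactly one new knot per marked node and being possible at each node at most $p+1-\#_0 z\le p+1$ times before the node turns into a bisection trigger instead; (c) recursive closure bisections forced by (iii). For (c) one uses the one-dimensional analog of Binev--Dahmen--DeVore type arguments. A simpler direct argument in 1D: assign to each newly created knot from a closure bisection the originally marked node whose refinement triggered the chain, and bound the chain length using the uniform ratio (M1) on the parameter domain. The number of hops in a chain is bounded in terms of $\log_2\check{\kappa}_{\max}$ and the initial local mesh-ratios, so each direct bisection generates at most a constant number of closure bisections, the constant depending only on $\TT_0$. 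Combining (a)--(c) and summing over $j=0,\dots,\ell-1$ yields $|\KK_\ell|-|\KK_0|\le \Cmesh\sum_{j=0}^{\ell-1}|\MM_j|$ with $\Cmesh$ depending on $p$ and on $\TT_0$ transferred to $[a,b]$. The subtle point in writing this step carefully will be handling the multiplicity marks, for which one has to observe that marking the same node more than $p+1$ times consecutively is ultimately counted against bisections (so the amortised cost per marked node is still $O(1)$).
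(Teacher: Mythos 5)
Your treatment of (M1) coincides with the paper's: both invoke \cite[Theorem~2.3]{meshoneD} for termination and the ratio bound $\check\kappa_{\max}=2\check\kappa_0$.

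For (M2) the idea is the same. You build the overlay directly on the parameter domain with multiplicity $\max(\#_\star z,\#_+z)$ and use $\max(\#_\star z,\#_+ z)\le\#_\star z+\#_+ z-\#_0 z$; the paper first takes the coarsest common $h$-refinement from \cite{meshoneD} and then adds multiplicity increases by repeated marking, but the counting is identical. One small point to make explicit: justifying $[\TT_\star\oplus\TT_+]\in\refine([\TT_\star])\cap\refine([\TT_+])$ for the $h$-part is exactly the overlay statement of \cite[Theorem~2.3]{meshoneD}, so you are implicitly relying on the same reference.

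There is a genuine gap in (M3). Your ``simpler direct argument'' — that each closure chain has length bounded by $\log_2\check\kappa_{\max}$ and the initial mesh ratios, hence each mark triggers $O(1)$ closure bisections — is false. The chain length scales with the depth of the current grading, not with $\check\kappa_{\max}$. Concretely, with $\check\kappa_0=1$, refine the leftmost node of a uniform mesh $k$ times to obtain the graded parameter mesh $2^{-k},2^{-k},2^{-k+1},\dots,1/2,1,\dots$, which has local ratio $\le 2$ everywhere. Now mark a single interior node with saturated multiplicity so that the element of size $2^{-k+2}$ is queued. Step (iii) forces its neighbor of size $2^{-k+3}$ (since $2^{-k+3}>1\cdot2^{-k+2}$), that one forces $2^{-k+4}$, and so on up to size~$1$: a chain of length $\Theta(k)$ from one marked node. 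So no per-mark constant bound exists, and the ``amortised over $p+1$ repetitions'' argument you sketch for multiplicity marks cannot rescue it. The correct route — and the one the paper actually uses — is to split $|\KK_\ell|-|\KK_0|=(|\TT_\ell|-|\TT_0|)+\sum_{j<\ell}|\#_j|$, bound $\sum_j|\#_j|\le\sum_j|\MM_j|$ trivially, and invoke the genuinely amortised mesh-closure estimate $|\TT_\ell|-|\TT_0|\le C\sum_j|\SS_j|$ from \cite[Theorem~2.3]{meshoneD} together with $|\SS_j|\le2|\MM_j|$. That mesh-closure estimate is the 1D Binev--Dahmen--DeVore-type result you allude to and then abandon; you should just cite it rather than attempt a bounded-chain-length shortcut.
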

\begin{proof}
For any partition $\TT_\star$ of $\Gamma$ and any subset of marked elements $\SS_\star\subseteq \TT_\star$, let $\widetilde{\refine}(\TT_\star,\SS_\star)$ be the partition obtained from the recursive bisection in step {\rm (iii)--(iv)} above.
{This local $h$-refinement procedure has been analyzed in \cite{meshoneD}.
According to \cite[Theorem~2.3]{meshoneD}, the recursion is well-defined and guarantees $\check{\kappa}_\star\le 2\check{\kappa}_0$ for all $\TT_\star\in \widetilde{\refine}(\TT_0)$.}

{To see {\rm(M2)}, \cite[Theorem 2.3]{meshoneD} guarantees the existence of some coarsest common refinement $\TT_\star \widetilde\oplus\TT_+\in\widetilde\refine(\TT_\star)\cap\widetilde\refine(\TT_+)$ such that}
\begin{align*}
|\TT_\star\widetilde{\oplus}\TT_{+}|\le |\TT_\star|+|\TT_{+}|-|\TT_0|.
\end{align*}
The corresponding nodes just satisfy $\NN_\star\oplus\NN_{+}=\NN_\star\cup\NN_{+}$.
There exists a finite sequence of meshes $\TT_\star=\widetilde\TT_1,\widetilde\TT_2=\widetilde\refine(\widetilde\TT_1,\SS_1),\dots,\widetilde\TT_\ell=\widetilde\refine(\widetilde\TT_{\ell-1},\SS_{\ell-1})=\TT_\star\widetilde{\oplus}\TT_{+}$ with suitable $\SS_j\subseteq \TT_j$ for $j=1,\dots,\ell-1$.
If we define $\MM_j\subseteq \NN_j$ as the set of all nodes in $\SS_j$, we see that the sequence $[\TT_\star]=[\TT_1],[\TT_2]=\refine([\TT_1],\MM_1),\dots[\TT_\ell]=\refine([\TT_{\ell-1},\MM_{\ell-1})$ satisfies $\TT_j=\widetilde\TT_j$ for $j=1,\dots\ell$.
By repetitively marking one single node, we obtain from $[\TT_\ell]$ a mesh $[\TT_\star\oplus \TT_+]$ with nodes $\NN_\star\oplus\NN_+=\NN_\star\cup\NN_+$ and $\# z=\max(\#_\star z,\#_+z)$, where $\#_\star$ resp. $\#_+$ denote the multiplicity in $\KK_\star$ resp. $\KK_+$ and, e.g.,  $\#_+ z:=0$ if $z\in\NN_\star\setminus\NN_+$.
There obviously holds
\begin{align*}
|\KK_\star\oplus\KK_+|=\sum_{z\in\NN_\star\cup\NN_+} \# z\le|\KK_\star|+|\KK_+|-|\KK_0|.
\end{align*}
Moreover, $[\TT_\star\oplus\TT_+]$ is clearly a refinement of $[\TT_+]$ as well.
%If we define $\MM_j\subseteq \NN_j$ as the set of all nodes  in $\SS_j$, we see that the finite sequence $[\TT_\star]=[\TT_1],[\TT_2]=\refine([\TT_1],\MM_1),\dots,[\TT_\ell]=\refine([\TT_{\ell-1}],\MM_{\ell-1})$ satisfies $\TT_j=\widetilde\TT_j$ for $j=1,\dots,\ell$.
%By repetitively marking one single node, we obtain from $[\TT_\ell]$ a mesh $[\TT_\star\oplus\TT_{+}]$ with nodes $\NN_\star\oplus\NN_{+}=\NN_\star\cup\NN_{+}$ and $\# z=\max(\#_\star z,\#_{+} z)$, where $\#_\star$ resp. $\#_{+}$ denote the multiplicity in $\KK_\star$ resp. $\KK_{+}$, {where $\#_+z:=0$ if $z\in\NN_\star\setminus\NN_+$}.
%There obviously holds 
%\begin{align*}
%|\KK_\star\oplus\KK_{+}|={\sum_{z\in\NN_\star\cup \NN_+} \# z}\le |\KK_\star|+|\KK_{+}|-|\KK_0|.
%\end{align*}
%Moreover, $[\TT_\star \oplus\TT_{+}]$ is clearly a refinement of $[\TT_{+}]$ as well.

Finally we consider {\rm(M3)}. As before we have  $\TT_1=\widetilde\refine(\TT_0,\SS_0),\dots,$$\TT_\ell=\widetilde\refine(\TT_{\ell-1},\SS_{\ell-1})$ for suitable $\SS_{j}\subseteq \TT_j$, $j=0,\dots,\ell-1$.
Note that there holds $|\SS_j|\le 2æ|\MM_j|$.
We denote $|\#_j|:=|\KK_{j+1}|-|\KK_{j}|-(|\NN_{j+1}|-|\NN_j|)$ as the number of multiplicity increases during the $j$-th refinement.
There holds 
{\begin{align*}
|\KK_{j+1}|-|\KK_j|= |\TT_{j+1}|-|\TT_j|+|\#_j|
\end{align*}
}and hence
\begin{align*}
|\KK_\ell|-|\KK_0|=|\TT_\ell|-|\TT_0|+\sum_{j=0}^{\ell-1} |\#_j|.
\end{align*}
The term $|\TT_\ell|-|\TT_0|$ can be estimated by $C\sum_{j=0}^{\ell-1} |\SS_j|$ with some constant $C>0$ which  depends only on the initial partition of the parameter domain, see \cite[Theorem~2.3]{meshoneD}, and hence by $2C\sum_{j=0}^{\ell-1}|\MM_j|$.
The estimate $|\#_j|\le |\MM_j|$ concludes the proof with $\Cmesh=2C+1$.
\end{proof}

%%%%%%%%%%%%%%%%%%%%%%%%%%%%%%%%%%%%%%%%%%%%%%%%%%%%%%%%%%%%%%%%%%%%%%%%%%%%%%%%
\subsection{B-splines and NURBS}
\label{subsec:splines}
%%%%%%%%%%%%%%%%%%%%%%%%%%%%%%%%%%%%%%%%%%%%%%%%%%%%%%%%%%%%%%%%%%%%%%%%%%%%%%%%
Throughout this subsection, we consider \textit{knots} $\check{\mathcal{K}}:=(t_i)_{i\in\Z}$ on $\R$ with multiplicity $\#t_i$ which satisfy
$t_{i-1}\leq t_{i}$ for $i\in \Z$ and $\lim_{i\to \pm\infty}t_i=\pm \infty$.
Let $\check{\mathcal{N}}:=\set{t_i}{i\in\Z}=\set{\check{{z}}_{j}}{j\in \Z}$ denote the corresponding set of nodes with $\check{{z}}_{j-1}<\check{{z}}_{j}$ for $j\in\Z$.
%!TEX encoding = UTF-8 Unicode{We define the local mesh-ratio
%\begin{align}
%\check{\kappa}(\check{\KK}):=\sup_{j\in\Z} \left(\max\Big\{\frac{\check{z}_j-\check{z}_{j-1}}{\check{z}_{j+1}-\check{z}_j},\frac{\check{z}_{j+1}-\check{z}_j}{\check{z}_j-\check{z}_{j-1}}\Big\}\right).
%\end{align}}
For $i\in\Z$, the $i$-th \textit{B-spline} of degree $p$ is defined inductively by
\begin{align}
\begin{split}
B_{i,0}&:=\chi_{[t_{i-1},t_{i})},\\
B_{i,p}&:=\beta_{i-1,p} B_{i,p-1}+(1-\beta_{i,p}) B_{i+1,p-1} \quad \text{for } p\in \N,
\end{split}
\end{align}
where, for $t\in \R$,
\begin{align*}
\beta_{i,p}(t):=
\begin{cases}
\frac{t-t_i}{t_{i+p}-t_i} \quad &\text{if } t_i\neq t_{i+p},\\
0 \quad &\text{if } t_i= t_{i+p}.
\end{cases}
\end{align*}
 We also use the notations $B_{i,p}^{\check{\mathcal{K}}}:=B_{i,p}$ and $\beta_{i,p}^{\check{\mathcal{K}}}:=\beta_{i,p}$ to stress the dependence on the knots~$\check{\mathcal{K}}$.
The following lemma collects some
basic properties of B-splines.

%\begin{lemma} \label{lem:spline basis}
%Let $I=[a,b)$ be a finite interval and $p\in \N_0$.
%Then
%\begin{equation}
%\set{B_{i,p}|_I}{i\in \Z, B_{i,p}|_I\neq 0}
%\end{equation}
%is a basis for the space of all right-continuous $\check{\mathcal{N}}$-piecewise polynomials of degree lower or equal $p$ on $I$ and which are, at each knot $t_i$, $p-\#t_i$ times continuously differentiable if $p-\#t_i\geq 0$.\qed
%\end{lemma}
%
%\begin{proof}
%The proof is found in {\cite[Theorem 6]{Boor-SplineBasics}}.
%\end{proof}

\begin{lemma}\label{lem:properties for B-splines}
Let $I=[a,b)$ be a finite interval and $p\in \N_0$.
Then, the following assertions  {\rm(i)--(vi)} hold:
\begin{enumerate}[\rm(i)]
\item \label{item:spline basis}
The set $\set{B_{i,p}|_I}{i\in \Z, B_{i,p}|_I\neq 0}$ is a basis for the space of all right-continuous $\check{\mathcal{N}}$-piecewise polynomials of degree lower or equal $p$ on $I$ which are, at each knot $t_i$, $p-\#t_i$ times continuously differentiable if $p-\#t_i\geq 0$.
\item \label{item:B-splines local} For $i\in\Z$,  $B_{i,p}$ vanishes outside the interval $[t_{i-1},t_{i+p})$. 
It is positive on the open interval $(t_{i-1},t_{i+p})$.
\item \label{item:B-splines determined} For $i\in \Z$,  $B_{i,p}$ is completely determined by the $p+2$ knots $t_{i-1},\dots,t_{i+p}$. 
\item\label{item:B-splines partition} The  B-splines of degree $p$ form a (locally finite) partition of unity, i.e.,
\begin{equation}
\sum_{i \in\Z} B_{i,p}=1\quad \text{on }\R.
\end{equation}
%\item \label{item:splines translated} {For $i\in \Z$ and  $s\in\R$, we have 
%\begin{equation}
%\forall t\in\R:\quad B_{i,p}^{s+\check{\mathcal{K}}}(t)=B_{i,p}^{\check{\mathcal{K}}}(t-s),
%\end{equation}
%and for $c>0$
%\begin{equation}
%\forall t\in \R:\quad B_{i,p}^{c\check{\mathcal{K}}}(t)=B_{i,p}^{\check{\mathcal{K}}}(t/c).
%\end{equation} 
%Streichen!!!}
% \item \label{item:spline convergence}{ For $\ell \in \N$, let $\check{\mathcal{K}}_{\ell}=(t_{\ell,i})_{i\in\Z}$ be a sequence of knots such that $\#t_{i,\ell}=\#t_i$ for all $i\in\Z$.
%If $(\check{\mathcal{K}}_{\ell})_{\ell\in\N}$ converges pointwise to $\check{\mathcal{K}}$, then $\big(B_{i,p}^{\check{\mathcal{K}}_{\ell}}\big)_{\ell\in\N}$ converges almost everywhere to $B_{i,p}^{\check{\mathcal{K}}}$ for all $i\in \N$. Streichen!!!}
\end{enumerate}
\end{lemma}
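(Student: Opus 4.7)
The plan is to prove items \textrm{(ii)}--\textrm{(iv)} by straightforward induction on $p$ using the defining recursion, and then deduce item \textrm{(i)} via a smoothness check for each $B_{i,p}$ combined with a dimension count, with linear independence supplied by Marsden's identity.

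For \textrm{(ii)}, the base case $p=0$ is immediate from $B_{i,0}=\chi_{[t_{i-1},t_{i})}$. For $p\ge 1$, the recursion $B_{i,p}=\beta_{i-1,p}B_{i,p-1}+(1-\beta_{i,p})B_{i+1,p-1}$ combined with the inductive supports $[t_{i-1},t_{i+p-1})$ and $[t_{i},t_{i+p})$ yields $\supp B_{i,p}\subseteq [t_{i-1},t_{i+p})$; on the open interval $(t_{i-1},t_{i+p})$ positivity follows since $\beta_{i-1,p}>0$ on $(t_{i-1},t_{i+p-1})$ and $1-\beta_{i,p}>0$ on $(t_{i},t_{i+p})$ whenever the relevant knots do not coincide, while the coincident-knot cases are handled by noting that the corresponding summand simply vanishes and the remaining summand covers the appropriate subinterval. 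For \textrm{(iii)}, one reads off the recursion that $B_{i,p}$ only consults the knots $\{t_{i-1},t_{i+p-1}\}$, $\{t_{i},t_{i+p}\}$ (through the $\beta$'s) together with the knots used by $B_{i,p-1}$ and $B_{i+1,p-1}$, whose union by the inductive hypothesis is $\{t_{i-1},\dots,t_{i+p}\}$. For \textrm{(iv)}, re-indexing $i\mapsto i+1$ in the second sum of $\sum_i B_{i,p}$ collapses it to $\sum_i[\beta_{i-1,p}+(1-\beta_{i-1,p})]B_{i,p-1}=\sum_i B_{i,p-1}=1$.

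For \textrm{(i)}, the first step is to verify that each $B_{i,p}$ belongs to the claimed smoothness class at every knot $t_j$: another induction on $p$ shows that multiplication by the piecewise linear factor $\beta$ raises continuity by one order away from $t_i$ and $t_{i+p}$, whereas at a knot $t_j$ of multiplicity $\#t_j$ the same recursion produces exactly $p-\#t_j$ continuous derivatives when this number is non-negative. Next, one checks that the number of B-splines with support intersecting $I$ equals the dimension of the target piecewise-polynomial space, namely $(p+1)+\sum_{t_i\in(a,b)}\#t_i$, which is standard. Finally, linear independence of the restrictions $B_{i,p}|_I$ is established locally through Marsden's identity, which expresses any polynomial of degree $\le p$ on a knot span as a linear combination of the $p+1$ B-splines nonvanishing there. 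A vanishing global combination then contradicts this local representation unless all coefficients vanish.

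The main obstacle lies in \textrm{(i)}: the precise smoothness count at knots of arbitrary multiplicity and the dimension-matching argument require some care, and linear independence in the presence of repeated knots does not reduce to a trivial support argument. The cleanest route is to invoke the Curry--Schoenberg theorem from classical spline theory, which packages exactly this dimension count and basis property; alternatively Marsden's identity combined with the support property \textrm{(ii)} suffices. Items \textrm{(ii)}--\textrm{(iv)} are then comparatively routine.
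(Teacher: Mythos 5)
Your proposal is mathematically correct, but it takes a genuinely different route from the paper: the paper delegates Lemma~\ref{lem:properties for B-splines} entirely to citations in de~Boor's \emph{Spline Basics} (Theorem~6 for~(i), Section~2 for (ii)--(iii), pp.~9--10 for~(iv)), supplying no inline argument. What you provide is essentially a self-contained reconstruction of those classical proofs. Your inductive treatment of (ii)--(iv) via the defining recursion $B_{i,p}=\beta_{i-1,p}B_{i,p-1}+(1-\beta_{i,p})B_{i+1,p-1}$ is correct, including the care you flag around coincident knots (where $\beta_{i-1,p}$ or $1-\beta_{i,p}$ may degenerate but the corresponding lower-order B-spline already vanishes), and the re-indexing collapse for the partition of unity. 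Your dimension count $(p+1)+\sum_{t_i\in(a,b)}\#t_i$ for the target spline space is right, and deferring linear independence/spanning to Marsden's identity or Curry--Schoenberg is exactly how de~Boor organizes the argument, so you are reproving rather than diverging from the source. The trade-off is the usual one: the paper keeps the exposition short by outsourcing standard spline theory, while your version would make the lemma verifiable without consulting the reference, at the cost of several additional paragraphs (especially for~(i), whose smoothness-at-multiple-knots verification and the Curry--Schoenberg dimension/independence machinery are the genuinely non-trivial part, as you correctly note).
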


\begin{proof}
The proof of~\eqref{item:spline basis} is found in {\cite[Theorem 6]{Boor-SplineBasics}}, and (\ref{item:B-splines local})--(\ref{item:B-splines determined}) are proved in \cite[Section 2]{Boor-SplineBasics}.
(\ref{item:B-splines partition}) is proved in \cite[page 9--10]{Boor-SplineBasics}.
%Finally, (\ref{item:splines translated})--(\ref{item:spline convergence}) is proved in \cite[Lemma 4.2]{igafaermann}.
\end{proof}
\begin{figure}[t] 
\psfrag{pacman (Section 5.3)}[c][c]{}%{\footnotesize pacman (Section~\ref{subsec:singular sol})}
\psfrag{45}[r][r]{\tiny $45^\circ$}
\begin{center}
\includegraphics[width=1\textwidth]{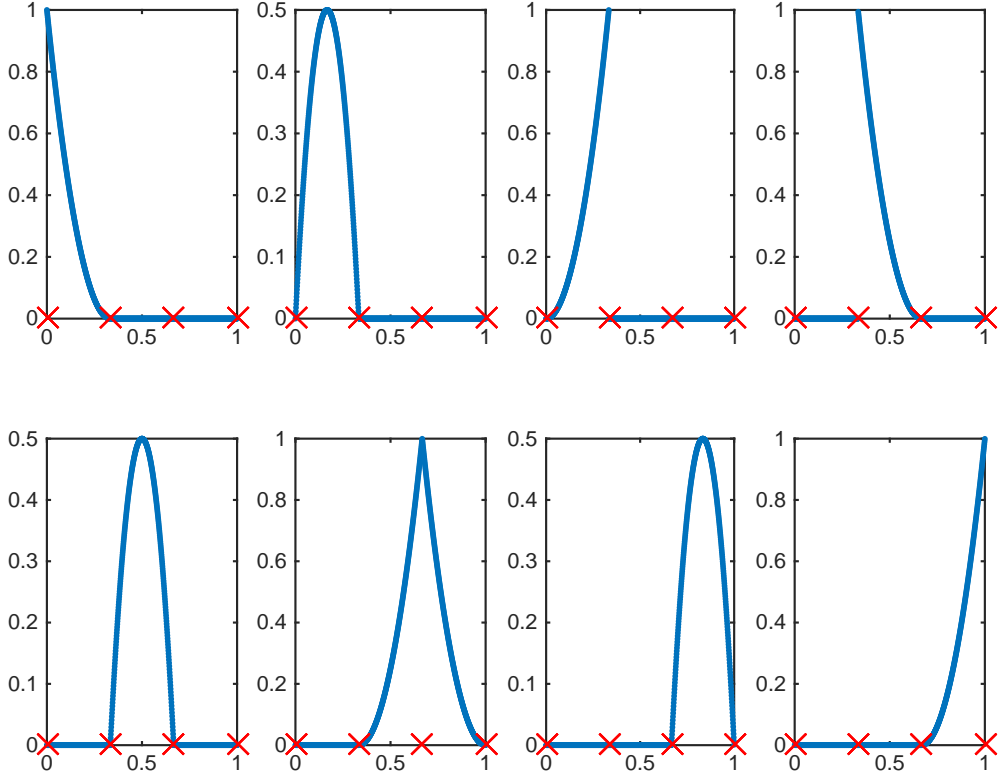}
\end{center}
\caption{B-splines on the interval $[0,1]$ corresponding to knot sequence $(\dots,0,0,0,1/3,1/3,1/3,2/3,2/3,1,1,1,\dots)$.} 
\label{fig:B-splines}
\end{figure}

In addition to the knots $\check{\mathcal{K}}=(t_i)_{i\in\Z}$, we consider positive weights $\mathcal{W}:=(w_i)_{i\in\Z}$ with $w_i>0$.
For $i\in \Z$ and $p\in \N_0$, we define the $i$-th 
% \textit{non-uniform rational B-Spline} of degree $p$ or shortly 
NURBS by
\begin{equation}
R_{i,p}:=\frac{w_iB_{i,p}}{\sum_{\ell\in\Z}  w_{\ell}B_{\ell,p}}.
\end{equation}
%We note that $R_{i,p}$ vanishes outside the interval $[t_{i-1},t_{i+p})$, while it
%is positive on the open interval $(t_{i-1},t_{i+p})$.
We also use the notation $R_{i,p}^{\check{\mathcal{K}},\mathcal{W}}:=R_{i,p}$.
Note that the denominator is locally finite and positive.

For any $p\in\N_0$, we define the B-spline space
\begin{equation}
\mathscr{S}^p(\check{\mathcal{K}}):=\left\{\sum_{i\in\Z}a_i B_{i,p}:a_i\in\R\right\}
\end{equation}
as well as the NURBS space
\begin{equation}\label{eq:NURBS space defined} 
\mathscr{N}^p(\check{\mathcal{K}},\mathcal{W}):=\left\{\sum_{i\in\Z}a_i R_{i,p}:a_i\in\R\right\}=\frac{\mathscr{S}^p(\check{\mathcal{K}})}{\sum_{i\in \Z} w_\ell B_{\ell,p}^{\check{\mathcal{K}}}}.
\end{equation}
%Note that the sums are locally finite.

%%%%%%%%%%%%%%%%%%%%%%%%%%%%%%%%%%%%%%%%%%%%%%%%%%%%%%%%%%%%%%%%%%%%%%%%%%%%%%%%
\subsection{Ansatz spaces}
\label{section:igabem}
Let $[\TT_0]$ be a given initial mesh with corresponding knots $\KK_0$ {such that $h_0 \le |\Gamma|/4$ for closed $\Gamma=\partial\Omega$.
We set $[\T]:=\refine([\TT_0])$.} Suppose that {$\mathcal{W}_0=(w_{i})_{i=1-p}^{N-p}$} are given initial weights 
with $N=|\KK_0|$ for closed $\Gamma=\partial\Omega$ resp.\ $N=|\KK_0|-(p+1)$ for open $\Gamma\subsetneqq\partial\Omega$.

If $\Gamma=\partial\Omega$ is closed, {we extend the transformed knot sequence $\check{\KK}_0=(t_i)_{i=1}^N$ arbitrarily to $(t_i)_{i\in \Z}$ with $t_{-p}=\dots=t_0=a$, $t_i\le t_{i+1}$, $\lim_{i\to \pm\infty}t_i=\pm \infty$ and $\mathcal{W}_0=(w_i)_{i\in \Z}$ with $w_i>0$.}
For the extended sequences, we also write  $\check{\mathcal{K}}_0$ and $\mathcal{W}_0$ and set
\begin{equation}\label{eq:X0:closed}
%\XX([\TT_0])=
\XX_{0}:=\mathscr{N}^p(\check{\mathcal{K}}_{0},\mathcal{W}_{0})|_{[a,b)}\circ \gamma|_{[a,b)}^{-1}.
\end{equation}

If $\Gamma_{}\subsetneqq \partial\Omega$ is open,
%let {\color{red}$\mathcal{W}_\star=(w_{i})_{i=1}^{N}$} be a sequence of weights.
we extend the sequences $\check{\KK}_0=(t_i)_{i=-p}^N$ and $\WW_0$ {arbitrarily to $(t_i)_{i\in \Z}$ with $t_i\le t_{i+1}$, $\lim_{i\to \pm\infty}t_i=\pm \infty$ and $\mathcal{W}_0=(w_i)_{i\in \Z}$ with $w_i>0$.} This allows to define
\begin{equation}\label{eq:X0:open}
%\XX([\TT_0])=
\XX_0:=\mathscr{N}^p(\check{\mathcal{K}}_0,\mathcal{W}_0)|_{[a,b]}\circ \gamma^{-1}.
\end{equation}
Due to Lemma~\ref{lem:properties for B-splines}, %\eqref{item:B-splines local} and~\eqref{item:B-splines determined},
 this definition does not depend on how the sequences are extended.

Let $[\TT_\star]\in[\T]$ be a mesh with knots $\KK_\star$. 
Via \emph{knot insertion} from $\KK_0$ to $\KK_\star$, one obtains unique corresponding weights $\WW_\star$.
These are chosen such that the denominators of the NURBS functions do not change.
In particular, this implies nestedness 
\begin{align}
\XX_\star\subseteq \XX_{+} \quad\text{for all }[\TT_\star]\in[\T], [\TT_{+}]\in\refine(\TT_\star),
\end{align}
 where the spaces $\XX_\star$ resp.\ $\XX_+$ are defined analogously to~\eqref{eq:X0:closed}--\eqref{eq:X0:open}.
Moreover, the weights are just convex combinations of $\WW_0$, wherefore
\begin{align}
w_{\min}:=\min(\WW_0)\le \min(\WW_\star)\le \max(\WW_\star)\le \max(\WW_0)=:w_{\max}.
\end{align}
For further details, we refer to, e.g., \cite[Section 4.2]{igafaermann}.
%If $w$ denotes the denominator of the used NURBS, it holds
%\begin{align}
%\XX_\star=\frac{\mathscr{S}^p(\check{K}_\star)}{w}\circ\gamma|^{-1}_{[a,b\rangle}\le\frac{\mathscr{S}^p(\check{K}_{+})}{w}\circ\gamma|^{-1}_{[a,b\rangle}=\XX_{+}, 
%\end{align}
%where we write $"\rangle"$ for $")"$ if $\Gamma=\partial\Omega$ is closed and $"]"$ else.
%The solution of the discrete equation~\eqref{eq:discrete} is denoted as $\Phi_\star$ resp. $\Phi_{+}$. 
%\newpage

%%%%%%%%%%%%%%%%%%%%%%%%%%%%%%%%%%%%%%%%%%%%%%%%%%%%%%%%%%%%%%%%%%%%%%%%%%%%%%%%
%%%%%%%%%%%%%%%%%%%%%%%%%%%%%%%%%%%%%%%%%%%%%%%%%%%%%%%%%%%%%%%%%%%%%%%%%%%%%%%%%

\section{Adaptive algorithm and main results}
\label{section:algorithm}
%%%%%%%%%%%%%%%%%%%%%%%%%%%%%%%%%%%%%%%%%%%%%%%%%%%%%%%%%%%%%%%%%%%%%%%%%%%%%%%%%
%%%%%%%%%%%%%%%%%%%%%%%%%%%%%%%%%%%%%%%%%%%%%%%%%%%%%%%%%%%%%%%%%%%%%%%%%%%%%%%%

%!TEX root = resigaconv.tex
For each mesh $[\TT_\star]\in [\T]$, define the node-based error estimator 
\begin{subequations}\label{eq:weighted-residual}\begin{align}
\mu_{\star}^2=\sum_{z\in \NN_\star} \mu_{\star}(z)^2,
\end{align}
where the refinement indicators read
\begin{align}
\mu_{\star}(z)^2:= |\gamma^{-1}({\omega}_{\star}(z))| \norm{\partial_\Gamma(f-V\Phi_\star)}{L^2(\omega_{\star}(z))}^2
\quad\text{for all }z\in \NN_\star.
\end{align}\end{subequations}
Here, we must additionally suppose $f\in H^{1}(\Gamma)$ to ensure that $\mu_\star$ is well-defined.
It has been proved in~\cite{resigabem} that $\mu_\star$ is reliable, i.e., 
\begin{align}\label{eq:reliability}
\norm{\phi-\Phi_\star}{\H^{-1/2}(\Gamma)} \le \Crel\,\mu_\star,
\end{align}
 where $\Crel>0$ depends only on 
$p$, $w_{\rm min}$, $w_{\rm max}$, $\gamma$, and $\check{\kappa}_{\rm max}$.
{We note that the weighted-residual error estimator in the form $\mu_\star\simeq \norm{h_\star^{1/2}\partial_\Gamma(f-V\Phi_\star)}{L^2(\Gamma)}$ goes back to the works \cite{cs96, cc97}, where reliability \eqref{eq:reliability} is proved for standard 2D BEM with piecewise constants on polyhedral geometries, while the corresponding result for 3D BEM is found in \cite{cms}.}
We consider the following adaptive algorithm which employs the D\"orfler marking strategy~\eqref{eq:Doerfler} from \cite{dorfler} to single out nodes for refinement.

\begin{algorithm}\label{the algorithm}
\textbf{Input:} Adaptivity parameter $0<\theta<1$, $\Cmark\ge 1$, polynomial order $p\in \N_0$, initial mesh $[\TT_0]$, initial weights $\mathcal{W}_0$.\\
\textbf{Adaptive loop:} For each $\ell=0,1,2,\dots$ iterate the following steps {\rm(i)--(iv)}:
\begin{itemize}
\item[\rm(i)] Compute discrete approximation $\Phi_\ell\in\XX_\ell$ from Galerkin BEM.
\item[\rm(ii)] Compute refinement indicators $\mu_\ell({z})$
for all nodes ${z}\in\NN_\ell$.
\item[\rm(iii)] Determine an up to the multiplicative constant $\Cmark$  minimal set of nodes $\MM_\ell\subseteq\NN_\ell$ such that
\begin{align}\label{eq:Doerfler}
 \theta\,\mu_\ell^2 \le \sum_{{z}\in\MM_\ell}\mu_\ell({z})^2.
\end{align}
\item[\rm(iv)] Generate refined mesh $[\TT_{\ell+1}]:=\refine([\TT_\ell],\MM_\ell)$. 
\end{itemize}
\textbf{Output:} Approximate solutions $\Phi_\ell$ and error estimators $\mu_\ell$ for all $\ell \in \N_0$.
\end{algorithm}

Our main result is that the proposed algorithm is linearly convergent, even with the optimal algebraic rate.
For a precise statement of this assertion, let
$[\T_N]:=\set{[\TT_\star]\in[\T]}{|\KK_\star|-|\KK_0|\le N}$ be the finite set of all refinements having at most $N$ knots more than $[\TT_0]$.
Following~\cite{axioms}, we introduce an estimator-based approximation class $\A_s$ for $s>0$:
We write $\phi\in\A_s$ if 
\begin{align}
\norm{\phi}{\A_s}:=\sup_{N\in\N_0} \big( (N+1)^s \min_{[\TT_\star]\in[\T_N]} \mu_\star\big)<\infty.
\end{align}
In explicit terms, this just means that an algebraic convergence rate of $\mathcal{O}(N^{-s})$ for the estimator is possible, if the optimal meshes are chosen.
The following theorem is the main result of our work:

\def\Copt{C_{\rm opt}}
\begin{theorem}\label{thm:main}
Let $f\in H^1(\Gamma)$, so that the weighted-residual error estimator $\mu_\ell$ from~\eqref{eq:weighted-residual} is well-defined and that Algorithm~\ref{the algorithm} is driven by $\mu_\ell$.
We suppose that the Assumption~\ref{ass:mesh assumption} on the mesh-refinement holds true.
 Then, for each $0<\theta\le1$, there  exist constants $0<\qlin<1$ and $\Clin>0$ such that Algorithm~\ref{the algorithm} is linearly convergent in the sense of 
\begin{align}\label{eq:R-linear}
\mu_{\ell+n}\leq \Clin \,\qlin^n\,\mu_\ell \quad \text{for all }\ell,n\in\N_0.
\end{align}
In particular, this implies convergence
\begin{align}
 \Crel^{-1}\,\norm{\phi-\Phi_\ell}{\H^{-1/2}(\Gamma)} \le \mu_\ell
 \le \Clin\qlin^\ell\,\mu_0
 \xrightarrow{\ell\to\infty}0.
\end{align}%
Moreover, there is a constant $0<\theta_{\rm opt}<1$ such that for all $0<\theta<\theta_{\rm opt}$, there exists a constant $\Copt>0$ such that for all  $s>0$, it holds
\begin{align}\label{eq:optimal}
\phi\in\A_s\quad\Longleftrightarrow\quad \mu_\ell\le \frac{\Copt^{1+s}}{(1-\qlin^{1/s})^s} \norm{\phi}{\A_s}(|\KK_\ell|-|\KK_0|)^{-s}\quad \text{for all }\ell\in\N_0.
\end{align}
The constants $\qlin, \Clin$ depend only on $p, w_{\min},w_{\max}, \gamma, \theta$, and $\check{\kappa}_{\max}$ from {\rm(M1)}.
The constant $\theta_{\rm opt}$ depends only on $p, w_{\min},w_{\max}, \gamma$, and {\rm(M1)--(M3)}, and $\Copt$ depends additionally on $\theta$.
\end{theorem}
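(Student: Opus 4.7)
The plan is to verify the abstract axioms from the framework of \cite{axioms} for the estimator $\mu_\ell$ and then invoke the general theory to deduce both linear convergence \eqref{eq:R-linear} and optimal rates \eqref{eq:optimal}. Throughout, I write $\enorm{\cdot}$ for the energy norm induced by $V$, which by ellipticity is equivalent to $\norm{\cdot}{\H^{-1/2}(\Gamma)}$. The four ingredients I need are (A1) stability on non-refined nodes, (A2) estimator reduction on refined nodes, (A3) quasi-orthogonality, and (A4) discrete reliability.

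\textbf{Stability (A1) and reduction (A2).} Let $[\TT_+]\in\refine([\TT_\star])$. For nodes $z$ whose patch is untouched, $\omega_\star(z)=\omega_+(z)$ and the weighted mesh-size of Proposition~\ref{lem:mesh function} agrees on the patch, so the reverse triangle inequality gives
\begin{align*}
\biggl|\Bigl(\sum_{z\in\NN_\star\cap\NN_+}\mu_+(z)^2\Bigr)^{1/2}-\Bigl(\sum_{z\in\NN_\star\cap\NN_+}\mu_\star(z)^2\Bigr)^{1/2}\biggr|\lesssim \norm{h_\star^{1/2}\partial_\Gamma V(\Phi_+-\Phi_\star)}{L^2(\Gamma)}.
\end{align*}
The right-hand side is controlled by $\enorm{\Phi_+-\Phi_\star}$ via the mapping property $V:L^2(\Gamma)\to H^1(\Gamma)$ combined with the novel inverse estimate $\norm{h_\star^{\sigma}\Psi}{L^2(\Gamma)}\lesssim \norm{\Psi}{\H^{-\sigma}(\Gamma)}$ from Proposition~\ref{prop:inverse estimate} applied with $\sigma=1/2$ to $\Psi=\partial_\Gamma V(\Phi_+-\Phi_\star)$ (after a short localization and duality argument). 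For (A2), on refined nodes each corresponding element is either bisected (so $h_+\le \tfrac12 h_\star$ pointwise) or carries an increased knot multiplicity, and the latter case is precisely why the mesh-size function of Proposition~\ref{lem:mesh function} is designed to decrease under knot-multiplicity increase. This yields a uniform reduction factor $q_{\rm red}<1$ such that
\begin{align*}
\sum_{z\in\NN_+\setminus\NN_\star}\mu_+(z)^2\le q_{\rm red}\sum_{z\in\NN_\star\setminus\NN_+}\mu_\star(z)^2 + C\,\enorm{\Phi_+-\Phi_\star}^2.
\end{align*}

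\textbf{Quasi-orthogonality (A3) and discrete reliability (A4).} Nestedness $\XX_\star\subseteq\XX_+$ (guaranteed by the knot-insertion construction of the NURBS weights in Section~\ref{section:igabem}) together with symmetry and ellipticity of $V$ yields the Pythagoras identity $\enorm{\phi-\Phi_\star}^2=\enorm{\phi-\Phi_+}^2+\enorm{\Phi_+-\Phi_\star}^2$, which gives (A3) in its strongest form. For (A4) I would prove
\begin{align*}
\enorm{\Phi_+-\Phi_\star}\lesssim \Bigl(\sum_{z\in\mathcal R_{\star,+}}\mu_\star(z)^2\Bigr)^{1/2},
\end{align*}
where $\mathcal R_{\star,+}\subseteq\NN_\star$ collects the nodes in a fixed-order patch of the refined region. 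The standard recipe is to test the Galerkin error against itself, invoke Galerkin orthogonality to replace it by $f-V\Phi_\star$, and localize via a Scott--Zhang-type quasi-interpolation $J_\star:\XX_+\to\XX_\star$ with the three properties $J_\star\Psi=\Psi$ outside $\omega_\star^m(\mathcal R_{\star,+})$, local $\H^{-1/2}$-stability, and first-order approximation in a weighted $L^2$-norm. Such an operator is obtained by applying a parametric Scott--Zhang projection on B-splines and dividing by the unchanged NURBS denominator, using the uniform positivity $w_{\min}\le w_i\le w_{\max}$.

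\textbf{Conclusion and main obstacle.} Once (A1)--(A4) are at hand, the abstract theory of \cite{axioms} gives \eqref{eq:R-linear} using only (A1)--(A3) and the Dörfler property \eqref{eq:Doerfler}, while \eqref{eq:optimal} follows from (A1)--(A4) combined with the overlay estimate (M2) and the closure estimate (M3), via the Stevenson-type comparison with an optimal mesh from $[\T_N]$ realizing $\norm{\phi}{\A_s}$. The hard part is ensuring that the inverse estimate (Proposition~\ref{prop:inverse estimate}) and the weighted mesh-size function (Proposition~\ref{lem:mesh function}) cooperate across arbitrary knot multiplicities: NURBS with nontrivial multiplicities do not admit a clean reference-element scaling, and the weighted size must shrink both under bisection and under multiplicity increase in order for the reduction constant in (A2) to be strictly less than one uniformly in the sequence of meshes. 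All other steps are then reductions to standard axiomatic consequences.
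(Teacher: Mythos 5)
The overall route you describe---verifying the adaptivity axioms using the inverse estimate (Proposition~\ref{prop:inverse estimate}), the multiplicity-aware mesh-size function (Proposition~\ref{lem:mesh function}), a Scott--Zhang type projection for discrete reliability, and the Pythagoras identity from nestedness, then invoking the machinery of \cite{axioms}---is precisely what the paper does, and you have correctly identified every technical ingredient. But there is a genuine gap in how you close the argument: you claim to verify a reduction axiom for $\mu_\ell$ itself, whereas the indicator $\mu_\star(z)$ carries the weight $|\gamma^{-1}(\omega_\star(z))|$, which is completely insensitive to a pure knot-multiplicity increase. In that case $\NN_+=\NN_\star$, $\TT_+=\TT_\star$, $\omega_+(z)=\omega_\star(z)$ for every node $z$, and if moreover $\Phi_+=\Phi_\star$ (which is not excluded), then $\mu_+(z)=\mu_\star(z)$ exactly. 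Hence no uniform contraction factor $q_{\rm red}<1$ for $\mu_\ell$ can exist under multiplicity increase, and your displayed (A2) inequality is vacuous in this case because $\NN_+\setminus\NN_\star=\emptyset$, which defeats the purpose of the reduction step. You have spotted the fix yourself---the modified mesh-size $\widetilde h_\star$ contracts under multiplicity increase---but you never actually substitute that weight into the estimator.

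The paper closes this gap by introducing the auxiliary \emph{element-based} estimator $\widetilde\rho_\star$ in~\eqref{eq:tilde rho} with weight $\widetilde h_\star$, establishing local equivalence $\mu_\star\simeq\widetilde\rho_\star$~\eqref{eq:loceqtilde}, transferring the D\"orfler marking from $\mu_\ell$ to $\widetilde\rho_\ell$, and then proving the estimator-reduction inequality for $\widetilde\rho_\ell$; linear convergence then transports back to $\mu_\ell$ through the equivalence. For the rate-optimality part it introduces a \emph{second} auxiliary estimator $\rho_\star$ in~\eqref{eq:rho} with the plain parameter weight $\check h_\star$, and proves stability (Lemma~\ref{lem:stability rho}) and discrete reliability (Lemma~\ref{lem:discrete reliability}) for $\rho_\star$, again returning to $\mu_\star$ via~\eqref{eq:loceq}. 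Without this bookkeeping through auxiliary estimators, your argument as written does not close, even though all the right lemmas are in your toolbox.
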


\begin{remark}
The proof of Theorem~\ref{thm:main} reveals that linear convergence~\eqref{eq:R-linear} only requires~{\rm(M1)}, while optimal rates~\eqref{eq:optimal} rely on~\eqref{eq:R-linear} and~{\rm(M2)--(M3)}.
\end{remark}%

The proof of Theorem~\ref{thm:main} is given in Section~\ref{section:rlinear}--\ref{section:optimal}.
The ideas essentially follow those of~\cite{axioms}, where an axiomatic approach of adaptivity for abstract problems is found.
%In \cite[Section 5.4]{axioms} an element-based version of $\mu_\star$, $\norm{h_\star^{1/2}\partial_\Gamma (f-V\Phi_\star)}{L^2(\Gamma)}$ with indicators $\norm{h_{\star,T}^{1/2}\partial_\Gamma(f-V\Phi_\star)}{L^2(T)}$, is analysed for classical BEM with polygonal boundary $\Gamma$ and piecewise polynomial ansatz functions with discontinuous transitions.
%It satisfies the four axioms of adaptivity \cite[Section 3.1]{axioms}: stability on non-refined element domains, reduction on refined elements domains, general quasi-orthogonality and discrete reliability.
%In \cite[Section 4]{axioms} it is showed that these axioms imply that the estimator converges R-linearly and optimally with respect to the number of elements.
We note, however, that \cite{axioms} only considers $h$-refinement, while the present formulation of  Algorithm~\ref{the algorithm} steers both, the $h$-refinement and the knot multiplicity increase.

If Algorithm~\ref{the algorithm} is steered by the Faermann estimator
\begin{subequations}\label{eq:Faermann}\begin{align}
\eta_{\star}^2=\sum_{z\in \NN_\star} \eta_{\star}(z)^2
\end{align}
with the refinement indicators
\begin{align}
\eta_{\star}(z)^2:=  |f-V\Phi_\star|_{H^{1/2}(\omega_{\star}(z))}^2
\quad\text{for all }z\in \NN_\star,
\end{align}\end{subequations}
instead of $\mu_\star$, we can prove at least plain convergence of the estimator to zero.
In contrast to the weighted-residual estimator which requires additional regularity $f\in H^1(\Gamma)$,  the Faermann estimator $\eta_\star$  
allows a right-hand side $f\in H^{1/2}(\Gamma)$. 
Moreover, $\eta_\star$ estimator is efficient and reliable
\begin{align}\label{eq:reliability2}
C_{\rm eff}^{-1} \eta_\star \le \norm{\phi-\Phi_\star}{\H^{-1/2}(\Gamma)} \le \Crel\,\eta_\star,
\end{align}
{where $C_{\rm eff}>0$ depends only on $\Gamma$, while $\Crel>0$ depends additionally on $p, \check{\kappa}_{\max}, w_{\min}, w_{\max}$ and $\gamma$;
see \cite[Theorem 3.1 and 4.4]{igafaermann}.
This equivalence of error and estimator puts some interest on the following convergence theorem which is, however, weaker than the statement of Theorem~\ref{thm:main}.}

\begin{theorem}\label{thm:faermann}
Let $f\in H^{1/2}(\Gamma)$.
We suppose that {\rm (M1)} from Assumption~\ref{ass:mesh assumption} for the mesh-refinement holds.
Then, for each $0<\theta\le1$, Algorithm~\ref{the algorithm} steered by the Faermann estimator \eqref{eq:Faermann} is convergent in the sense of
\begin{align}\label{eq:Faermann converges}
\eta_\ell \xrightarrow{\ell\to\infty}0.
\end{align}
According to \eqref{eq:reliability2}, this is equivalent to
\begin{align}
\norm{\phi-\Phi_\ell}{\H^{-1/2}(\Gamma)}
 \xrightarrow{\ell\to\infty}0.
\end{align}%
\end{theorem}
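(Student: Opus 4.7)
The plan is to adapt the plain-convergence framework for adaptive BEM from \cite{mitscha} to the present setting, which in contrast to the situation treated there mixes $h$-refinement and knot-multiplicity increase.

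\emph{Step 1 (a priori convergence).} Nestedness $\XX_\ell\subseteq\XX_{\ell+1}$ together with the ellipticity of $V$ on $\H^{-1/2}(\Gamma)$ allows one to introduce $\XX_\infty:=\overline{\bigcup_{\ell\in\N_0}\XX_\ell}^{\H^{-1/2}(\Gamma)}$ and to obtain via Lax--Milgram a unique Galerkin limit solution $\Phi_\infty\in\XX_\infty$. Since $\Phi_\ell$ is the $V$-orthogonal projection of $\Phi_\infty$ onto $\XX_\ell$, C\'ea's lemma combined with density of $\bigcup_\ell\XX_\ell$ in $\XX_\infty$ yields $\Phi_\ell\to\Phi_\infty$ in $\H^{-1/2}(\Gamma)$. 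Continuity of $V$ as a map $\H^{-1/2}(\Gamma)\to H^{1/2}(\Gamma)$ then gives residual convergence $r_\ell:=f-V\Phi_\ell\to r_\infty:=f-V\Phi_\infty$ in $H^{1/2}(\Gamma)$.

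\emph{Step 2 (node classification and D\"orfler).} I would call a node $z\in\NN_\ell$ \emph{stable} if for every $k\ge\ell$ the patch $\omega_k(z)$ and the multiplicity of $z$ in $\KK_k$ both coincide with their values at step $\ell$; all other nodes are \emph{unstable}. By the refinement rules {\rm(i)}--{\rm(iv)} in Section~\ref{section:mesh-refinement}, marking a node always triggers either a multiplicity increase at $z$ or a bisection of one of the elements in $\omega_\ell(z)$ at the following step. Hence stable nodes are never marked, and the D\"orfler criterion~\eqref{eq:Doerfler} yields
\[ \theta\,\eta_\ell^2\;\le\;\sum_{z\in\MM_\ell}\eta_\ell(z)^2\;\le\;\sum_{z\in\NN_\ell\setminus\NN_\ell^{\mathrm{stable}}}|r_\ell|^2_{H^{1/2}(\omega_\ell(z))}. \]

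\emph{Step 3 (vanishing of the unstable sum).} The multiplicity at any fixed node can be increased at most $p+1$ times over the whole adaptive history, so every unstable node is eventually unstable only because one of the two elements in its patch is bisected. Together with the local mesh-ratio bound {\rm(M1)}, this implies that the individual diameters of the unstable patches $\omega_\ell(z)$ shrink to zero as $\ell\to\infty$, while the overlap of the patches stays uniformly bounded. Combining $r_\ell\to r_\infty$ in $H^{1/2}(\Gamma)$ with absolute continuity of the Sobolev--Slobodeckij double integral on shrinking subsets of $\Gamma$ should then give $\sum_{z\in\NN_\ell\setminus\NN_\ell^{\mathrm{stable}}}|r_\ell|^2_{H^{1/2}(\omega_\ell(z))}\to 0$. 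Combined with Step~2, this forces $\eta_\ell\to 0$, and reliability~\eqref{eq:reliability2} concludes the proof.

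The main technical obstacle is precisely this last absolute-continuity step: the non-nested mixture of $h$-refinement and multiplicity-increase forces a careful bookkeeping of which patches shrink at which level and of the behaviour of the Sobolev--Slobodeckij double integral on the resulting non-nested family of unstable patches. This is where the argument departs most visibly from the pure $h$-refinement analysis carried out in \cite{mitscha}.
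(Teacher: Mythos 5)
Your Steps~1--2 replicate the standard skeleton: a priori Galerkin convergence $\Phi_\ell\to\Phi_\infty$ in $\H^{-1/2}(\Gamma)$, and the observation that the D\"orfler mass sits on unstable nodes. The gap is Step~3, and you already flag it yourself. ``Unstable at level $\ell$'' is a statement about the \emph{future} of a node, not about its current patch size: a node $z$ adjacent to a large element that is only bisected once, at some late level $L$, is unstable at every level $\ell<L$ while its patch keeps its original size throughout. Moreover, the union of unstable patches need not shrink in measure at all --- if the algorithm effectively refines everywhere, every node is unstable forever and that union is all of $\Gamma$. So the absolute-continuity argument for the Sobolev--Slobodeckij double integral has nothing to bite on, and $\sum_{z\text{ unstable}}|r_\ell|^2_{H^{1/2}(\omega_\ell(z))}\to 0$ does not follow. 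Efficiency does not close the gap either, since Step~1 only yields $\Phi_\ell\to\Phi_\infty$, and $\Phi_\infty=\phi$ is precisely what still has to be proved.

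The paper sidesteps this entirely. It does not try to make the Faermann estimator itself decay on shrinking patches; instead it verifies the abstract plain-convergence framework of \cite{mitscha} (Lemma~\ref{lem:mitscha}) for an \emph{auxiliary} weighted-residual estimator $\widetilde\rho_\ell(f,z)=\|\widetilde h_\ell^{1/2}\partial_\Gamma(f-V\Phi_\ell)\|_{L^2(\omega_\ell(z))}$ built from the \emph{modified} mesh-size function $\widetilde h_\ell$ of Proposition~\ref{lem:mesh function}. The point of $\widetilde h_\ell$ is that it contracts by a uniform factor $\qctr<1$ on a marked node's patch \emph{whether} that node is treated by bisection \emph{or} by multiplicity increase. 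Combined with the inverse estimate of Proposition~\ref{prop:inverse estimate}, this yields the contraction axiom {\rm(A2)} with $\RR_\ell=\MM_\ell$. The Faermann estimator only enters through the local lower bound {\rm(A1)} ($\eta_\ell(\MM_\ell)\lesssim\widetilde\rho_\ell(\MM_\ell)$, from \cite[Theorem~4.3]{resigabem}) and the density axiom {\rm(A3)} with $D=H^1(\Gamma)$ dense in $H^{1/2}(\Gamma)$, which extends the result from $f\in H^1(\Gamma)$ to all $f\in H^{1/2}(\Gamma)$. That contractive auxiliary quantity is precisely the ingredient your Step~3 needs and lacks; without it the D\"orfler inequality alone does not force $\eta_\ell\to0$.
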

{\begin{remark}
The statements of Theorem~\ref{thm:main} and Theorem~\ref{thm:faermann} remain valid, if only adaptive $h$-refinement is used, i.e., if Algorithm~\ref{the algorithm} does not steer the knot multiplicity.
\end{remark}}

\section{Proof of Theorem~\ref{thm:main}, linear convergence~\eqref{eq:R-linear}}
\label{section:rlinear}
%%%%%%%%%%%%%%%%%%%%%%%%%%%%%%%%%%%%%%%%%%%%%%%%%%%%%%%%%%%%%%%%%%%%%%%%%%%%%%%%%
%%%%%%%%%%%%%%%%%%%%%%%%%%%%%%%%%%%%%%%%%%%%%%%%%%%%%%%%%%%%%%%%%%%%%%%%%%%%%%%%

%!TEX root = resigaconv.tex
%with a sufficiently small $\qmult\in (0,1)$ which is fixed later on.
%With the Galerkin approximation $\Phi_{\tTT}\in \XX_{\tTT}$ we write 
%\begin{align}
%\mu_{\tTT}:=\mu_{\tTT}(\Phi_{\tTT})\quad \text{and}\quad \rho_{\tTT}:=\rho_{\tTT}(\Phi_{\tTT}).
%\end{align}
%For $\tTT_\ell$ generated by the adaptive algorithm with the node-based estimator $\mu$, all associated quantities are indexed with $\ell$ instead of $\tTT_\ell$, we write for example $\mu_\ell(\cdot):=\mu_{\tTT_\ell}(\cdot)$.
%
%We proceed similarly as in \cite[Section 8.8]{axioms} to prove R-linear convergence of the error estimator.
{As an auxiliary result, we need an inverse-type estimate for NURBS with respect to the fractional $\H^{-1/2}(\Gamma)$-norm}.
%\eqref{eq:Cinvsig} is even stated and proved for the $\H^{-\sigma}(\Gamma)$-norm, where $0<\sigma<1$.
In the following, a result is stated and proved for the $\H^{-\sigma}(\Gamma)$-norm, where $0<\sigma<1$.
%{As it is nearly no additional effort, we prove it even for the $\H^{-\sigma}(\Gamma)$-norm, where $0<\sigma<1.}
For piecewise polynomials, an analogous result is already found in \cite[Theorem 3.6]{inversesauter} resp. \cite[Theorem 3.9]{georgoulis}.
Our proof is inspired by \cite[Section 4.3]{inversefaermann}, where a similar result is found for piecewise constant functions as well as for piecewise affine and globally continuous functions in 1D.
For integer-order Sobolev norms, inverse estimates for NURBS are found in \cite[Section 4]{approximation}, and 
\eqref{eq:Cinv} is  proved in \cite[Theorem 3.1]{invest} for piecewise polynomials.

\begin{proposition}\label{prop:inverse estimate}
\label{prop:invest}
Let $[\TT_\star]\in [\T]$  and $0<\sigma<1$. 
Then, there is a constant $\Cinv>0$ such that
\begin{align}\label{eq:Cinvsig}
\norm{h_\star^{\sigma}\Psi_\star}{L^2(\Gamma)}\leq \Cinv\norm{\Psi_\star}{\H^{-\sigma}(\Gamma)}\quad  \text{for all } \Psi_\star\in\XX_\star.
\end{align}
For $\sigma=1/2$, it holds
\begin{align}\label{eq:Cinv}
\norm{h_\star^{1/2}\partial_\Gamma (V\Psi_\star)}{L^2(\Gamma)}+\norm{h_\star^{1/2}\Psi_\star}{L^2(\Gamma)}\leq \Cinv\norm{\Psi_\star}{\H^{-1/2}(\Gamma)}\quad  \text{for all } \Psi_\star\in\XX_\star.
\end{align}
The constant $\Cinv$ only depends on $\check{\kappa}_{\max}, p, w_{\min}, w_{\max}$, $\gamma$, and $\sigma$.
%With ?? [inverse V]\footnote{\color{blue}{Eigentlich muss $\gamma$ hier zus\"atzliche Eigenschaften, notfalls einfach fordern. Vermutlich implizieren die bisher geforderten Eigenschaften zumindest teilweise die notwendigen.}}  (which states $\norm{h^{1/2}\partial_\Gamma (V\psi)}{L^2(\Gamma)}\le C(\norm{h^{1/2}\psi}{L^2(\Gamma)}+\norm{\psi}{\H^{-1/2}(\Gamma)})$ for arbitrary $\psi\in L^2(\Gamma)$ with a constant $C$ only dependent on $\check{\kappa}_{\max}$), we see 
%\begin{align}\label{eq:CinvV}
%\norm{h^{1/2}\partial_\Gamma (V\Psi_\star)}{L^2(\Gamma)}\le \CinvV \norm{\Psi_\star}{\H^{-1/2}(\Gamma)}\quad\text{for all }\Psi_\star \in \XX_\star,
%\end{align}
%with a constant $\CinvV$ which only depends on $\check{\kappa}_{\max}, p, w_{\min}, w_{\max}$ and $\gamma$.
\end{proposition}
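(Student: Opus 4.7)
The plan is to apply a duality argument in the spirit of \cite[Section 4.3]{inversefaermann}. Setting $v_\star := h_\star^{2\sigma}\Psi_\star$, the definition of the $\H^{-\sigma}$-norm yields
\begin{align*}
\norm{h_\star^{\sigma}\Psi_\star}{L^2(\Gamma)}^2
= \dual{\Psi_\star}{v_\star}_\Gamma
\le \norm{\Psi_\star}{\H^{-\sigma}(\Gamma)}\,\norm{v_\star}{H^\sigma(\Gamma)},
\end{align*}
so that \eqref{eq:Cinvsig} reduces to the inverse-type estimate $\norm{v_\star}{H^\sigma(\Gamma)} \lesssim \norm{h_\star^{\sigma}\Psi_\star}{L^2(\Gamma)}$. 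Transferring to the parameter domain via the bi-Lipschitz parametrization and the norm equivalence \eqref{eq:equivalent Hsnorm} reduces everything to NURBS on a real interval.

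For the inverse estimate on $v_\star$, the $L^2$-contribution is immediate from $\norm{h_\star}{L^\infty(\Gamma)} \le |\Gamma|$. For the Sobolev-Slobodeckij seminorm I would use the triangle inequality
\begin{align*}
|v_\star(x)-v_\star(y)| \le h_\star(x)^{2\sigma}|\Psi_\star(x)-\Psi_\star(y)| + |h_\star(x)^{2\sigma}-h_\star(y)^{2\sigma}|\,|\Psi_\star(y)|
\end{align*}
and split the double integral into an elementwise part and a cross-element part. The elementwise piece is controlled by the classical local inverse estimate $|\Psi_\star|_{H^\sigma(T)} \lesssim h_{\star,T}^{-\sigma}\norm{\Psi_\star}{L^2(T)}$, itself obtained by scaling to a reference interval using that NURBS of degree $p$ on a single element are smooth rational functions with denominators in $[w_{\min},w_{\max}]$.

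The main obstacle lies in the cross-element contributions, where $h_\star^{2\sigma}$ has jumps at nodes. Here I would use the uniform mesh-ratio bound $\check{\kappa}_\star \le \check{\kappa}_{\max}$ from {\rm(M1)}, which ensures that the jumps in $h_\star^{2\sigma}$ across adjacent elements $T,T'$ are comparable to $\min(h_{\star,T},h_{\star,T'})^{2\sigma}$. Combining this with explicit estimates of the double integrals $\int_T\int_{T'}|x-y|^{-1-2\sigma}\,dx\,dy$ over disjoint element pairs, which decay with inter-element distance, the resulting double sum collapses to a geometric-type series bounded by $\norm{h_\star^{\sigma}\Psi_\star}{L^2(\Gamma)}^2$. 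This bookkeeping is the bulk of the technical work.

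For the strengthened estimate \eqref{eq:Cinv} with $\sigma=1/2$ and the extra term $\norm{h_\star^{1/2}\partial_\Gamma(V\Psi_\star)}{L^2(\Gamma)}$, I would invoke \cite[Theorem 3.1]{invest}, which establishes precisely this bound for piecewise polynomials on piecewise smooth boundaries. Its proof uses only local inverse estimates and the mapping properties of $V$, both of which extend to NURBS via the same scaling argument as above together with the uniform bounds $w_{\min}\le W_\star\le w_{\max}$ on the NURBS denominator, yielding the full statement \eqref{eq:Cinv}.
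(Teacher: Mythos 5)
Your first step---taking the test function $v_\star := h_\star^{2\sigma}\Psi_\star$ directly---is where the proof breaks down. The duality argument requires $v_\star\in H^\sigma(\Gamma)$, but $v_\star$ is generically discontinuous at nodes: $h_\star^{2\sigma}$ is piecewise constant and jumps whenever adjacent elements have different sizes, and $\Psi_\star$ itself jumps at any node with multiplicity $p+1$. A piecewise-smooth function with a jump has \emph{infinite} Sobolev--Slobodeckij seminorm once $\sigma\ge 1/2$, because the jump contributes a non-integrable singularity $\int\int|x-y|^{-1-2\sigma}$ near the node. Thus $v_\star\notin H^{1/2}(\Gamma)$, and the pairing $\dual{\Psi_\star}{v_\star}_\Gamma$ cannot be bounded by $\norm{\Psi_\star}{\H^{-1/2}(\Gamma)}\,\norm{v_\star}{H^{1/2}(\Gamma)}$. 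Since $\sigma=1/2$ is precisely the case that drives the entire optimality analysis, the gap is fatal rather than cosmetic. Your discussion of ``cross-element contributions'' recognizes that something goes wrong at nodes but treats it as a bookkeeping task; in fact no bookkeeping can make a geometric-series bound out of divergent integrals when $\sigma\ge\tfrac12$.

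The paper's proof works around exactly this obstacle. Instead of testing $\Psi_\star$ against $h_\star^{2\sigma}\Psi_\star$, it tests against a sum of \emph{bubble functions} $\chi=\sum_{T}c_T P_{\Delta(T,\psi)}$ supported on subintervals $\Delta(T,\psi)\subset T$ (where $\Psi_\star$ has fixed sign and is bounded below relative to its sup, cf.\ Steps~2--3). Each bubble vanishes at the endpoints of its support, so $\chi$ is globally continuous, in fact $\chi\in H^1(\Gamma)\subset H^\sigma(\Gamma)$ for every $\sigma<1$, and the cross-element jump pathology never arises. The sign-persistence property of NURBS (Steps~2--3, using the bounds $w_{\min}\le w\le w_{\max}$ on the denominator) is what lets one still recover $\norm{h_\star^\sigma\Psi_\star}{L^2(\Gamma)}^2$ from $\dual{\Psi_\star}{\chi}_\Gamma$. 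Your final paragraph invoking \cite[Theorem~3.1]{invest} for the additional $V$-term in \eqref{eq:Cinv} matches Step~4 of the paper and is fine, but it rests on \eqref{eq:Cinvsig} which your argument does not establish at $\sigma=1/2$.
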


\begin{proof}
The proof is done in four steps.
First, we show that $\norm{h_\star^{\sigma}\psi}{L^2(\Gamma)}\lesssim \norm{\psi}{\H^{-\sigma}(\Gamma)}$ 
holds for all $\psi\in L^2(\Gamma)$ which satisfy
a certain assumption.
In the second step, we prove an auxiliary result for polynomials which is needed in the third one, where we show that all $\psi\in\XX_\star$ satisfy the mentioned assumption.
In the last step, we apply a recent result of~\cite{invest}, which then concludes the proof.

\noindent
{\bf Step 1:}
Let $\XX\subset L^2(\Gamma)$ satisfy the following assumption:
There exists a constant $q \in (0,1)$ such that for all $T\in\TT_\star$ and  all $\psi\in \XX$ there exists some connected subset $\Delta(T,\psi) \subseteq T$ of length $|\Delta(T,\psi)|\geq q|T|$ such that $\psi$ does not change its sign on $\Delta(T,\psi)$ and 
\begin{align}\label{eq:min-max}
\min_{x\in \Delta(T,\psi)}|\psi(x)|\geq q \,\max_{x\in T} |\psi(x)|.
\end{align} 
Then, there exists a constant $C>0$ which depends only on $q$ and $\check{\kappa}_\star$, such that 
\begin{align*}
\|h_\star^{\sigma} \psi \|_{L^2(\Gamma)} \leq C \|\psi\|_{\H^{-\sigma}(\Gamma)} \quad \text{for all }\psi\in \XX.
\end{align*}
%To see this, we assume without loss of generality  that {$\gamma=\gamma_L$ is the arclength parametrization}.
For a compact nonempty interval $[c,d]= I \subseteq [a,b]$, we define the bubble function
\begin{align*}
P_I(t):=\begin{cases}
\left(\frac{t-c}{d-c}\cdot\frac{d-t}{d-c}\right)^2\quad &\text{if } t\in I, \\ 0 \quad &\text{if }t\in [a,b]\setminus I.
\end{cases}
\end{align*}  
It obviously satisfies $0\leq P_I\leq 1$ and $\supp P_I=I$.
A standard scaling argument proves
\begin{align}\label{eq:scaling1}
C_1|I|\leq \|P_I\|_{L^2(I)}^2 \le \|P_I\|_{L^1(I)} \leq C_2 |I|
\end{align}
and
\begin{align}\label{eq:scaling2}
|I|^2\|P_I'\|_{L^2(I)}^2\leq C_3\|P_I\|_{L^2(I)}^2
\end{align}
with generic constants $C_1,C_2,C_3>0$ which do not depend on $I$.
For each $T\in \TT_\star$, let $I(T,\psi)$ be some interval with $\gamma(I(T,\psi))=\Delta(T,\psi)$.
With the arclength parametrization $\gamma_L$, we define, for all $T\in \TT_\star$,  the functions $P_{\Delta(T,\psi)}:=P_{I(T,\psi)}\circ\gamma_L$  and the coefficients
\begin{align}\label{eq:cT}
c_T:=\mathrm{sgn}(\psi|_{\Delta(T,\psi)})h_{\star,T}^{2\sigma}  \min_{x\in \Delta(T,\psi)} |\psi(x)|.
\end{align}
Note that~\eqref{eq:scaling1}--\eqref{eq:scaling2} hold for $P_{\Delta(T,\psi)}$ with $I$ simply replaced by $\Delta(T,\psi)$ and with $(\cdot)'$ replaced by the arclength derivative $\partial_\Gamma$.
By definiti gon of the dual norm, it holds 
\begin{align}\label{eq:test fct}
\|\psi\|_{\H^{-\sigma}(\Gamma)}\geq \frac{|\dual{\psi}{\chi}|}{\|\chi\|_{H^{\sigma}(\Gamma)}}
\quad\text{with, e.g., }
%\end{align}
%with, e.g.,
%\begin{align*}
\chi:= \sum_{T\in\TT_\star} c_T P_{\Delta(T,\psi)}\in H^1(\Gamma)\subset H^{\sigma}(\Gamma).
\end{align}
First, we estimate the numerator in \eqref{eq:test fct}:
\begin{eqnarray*}
|\dual{\psi}{\chi}|
&=& \left|\sum_{T\in\TT_\star} \int_{T}  \psi(x) c_T P_{\Delta(T,\psi)}(x) \,dx\right|\\
&\stackrel{\eqref{eq:cT}}{=}& \sum_{T\in\TT_\star} h_{\star,T}^{2\sigma} \min_{x\in \Delta(T,\psi)}|\psi(x)|^2\,\|P_{\Delta(T,\psi)}\|_{L^1(\Delta(T,\psi))}\\
&\stackrel{\eqref{eq:min-max}}{\geq}& {q^2} \sum_{T\in\TT_\star}  h_{\star,T}^{2\sigma}\max_{x\in T} |\psi(x)|^2 {\|P_{\Delta(T,\psi)}\|_{L^1(\Delta(T,\psi))}}\\
&\stackrel{\eqref{eq:scaling1}}{\geq}& {C_1q^3 \sum_{T\in\TT_\star} h_{\star,T}^{2\sigma}\,\|\psi\|_{L^2(T)}^2 }\\
&=& C_1 q^3  \|h_\star^{\sigma}\psi\|_{L^2(\Gamma)}^2.
\end{eqnarray*}
It remains to estimate the denominator in \eqref{eq:test fct}:
We first note that it holds $|u|_{H^{\sigma}(I)}^2\lesssim |I|^{1-\sigma}\norm{u'}{L^2(I)}$ for any  interval $I\subset \R$ of finite length and $u\in H^1(I)$.
This is already stated in \cite[Lemma 7.4]{carsfaer}. 
However, a detailed proof is  given only for $1/2<\sigma<1$.
For $0<\sigma\le 1/2$ this inequality can be shown exactly as in the proof of~\cite[Lemma 4.5]{resigabem}, where only $\sigma=1/2$ is considered.
This, together with \eqref{eq:equivalent Hsnorm}, implies for any connected $\omega\subseteq \Gamma$ with  $|\omega|\le \frac{3}{4}|\Gamma|$ that 
\begin{align}\label{eq:carsfaer}
|u|_{H^{\sigma}(\omega)}\lesssim |\omega|^{1-\sigma} \norm{\partial_\Gamma u}{L^2(\omega)}\quad\text{for all } u\in H^1(\Gamma).
\end{align} 
The hidden constant in \eqref{eq:carsfaer} depends only on $\sigma$ and $\Gamma$. 
\eqref{eq:carsfaer} is applicable for any node patch $\omega_\star(z)$ since we assumed in Section~\ref{section:igabem} that $h_0\le |\Gamma|/4$ if $\Gamma=\partial\Omega$
With \cite[Lemma 2.3]{faermann2d},  we hence see 
\begin{eqnarray*}
|\chi|_{H^{\sigma}(\Gamma)}^2&\stackrel{\mbox{\scriptsize\cite{faermann2d}}}\lesssim &
\|h_{\star}^{-\sigma}\chi\|_{L^2(\Gamma)}^2+\sum_{z\in\mathcal{N}_\star} |\chi|_{H^{\sigma}(\omega_z)}^2 \\
&{\stackrel{\mbox{\scriptsize\eqref{eq:carsfaer}}}\lesssim}& 
\|h_{\star}^{-\sigma}\chi\|_{L^2(\Gamma)}^2+\sum_{z\in\mathcal{N}_\star} \norm{h_\star^{1-\sigma} \partial_\Gamma\chi}{L^2(\omega_z)}^2  
\\&\simeq& \|h_{\star}^{-\sigma}\chi\|_{L^2(\Gamma)}^2+\sum_{T\in\TT_\star}  \norm{h_\star^{1-\sigma} \partial_\Gamma\chi}{L^2(\Delta(T,\psi))}^2  \\
&=&\norm{h_\star^{-\sigma}\chi}{L^2(\Gamma)}^2 + \sum_{T\in\TT_\star}  h_{\star,T}^{2-2\sigma}c_T^2 \norm{\partial_\Gamma P_{\Delta(T,\psi)}}{L^2(\Delta(T,\psi))}^2\\
&\stackrel{\eqref{eq:scaling2}}\le& {\norm{h_\star^{-\sigma}\chi}{L^2(\Gamma)}^2}+C_3\sum_{T\in\TT_\star}  h_{\star,T}^{2-2\sigma}c_T^2|\Delta(T,\psi)|^{-2}  \norm{ P_{\Delta(T,\psi)}}{L^2(\Delta(T,\psi))}^2\\
&\simeq& \|h_\star^{-\sigma}\chi\|_{L^2(\Gamma)}^2.
\end{eqnarray*}
This yields
\begin{align*}
\|\chi\|_{H^{\sigma}(\Gamma)}^2
= {\|\chi\|_{L^2(\Gamma)}^2 + |\chi|_{H^{\sigma}(\Gamma)}^2}
\lesssim \|h_\star^{-\sigma} \chi\|_{L^2(\Gamma)}^2,
\end{align*}
{where the hidden constant depends only on $\check{\kappa}_{\max}, \sigma$, and $\gamma$}.
With
\begin{eqnarray*}
\|h_\star^{-\sigma} \chi\|_{L^2(\Gamma)}^2&=& \sum_{T\in \TT_\star} h_{\star,T}^{-2\sigma} c_T ^2\| P_{\Delta(T,\psi)}\|_{L^2(\Delta(T,\psi))}^2
\stackrel{\eqref{eq:scaling1}}{\le} C_2\sum_{T\in\TT_\star}h_{\star,T}^{-2\sigma} c_T^2 |\Delta(T,\psi)| 
\\&\stackrel{\eqref{eq:cT}}{=}& C_2\sum_{T\in\TT_\star} h_{\star,T}^{2\sigma}\min_{x\in \Delta(T,\psi)} |\psi(x)|^2|\Delta(T,\psi)| \\
&\leq &{C_2 \sum_{T\in \TT_\star} h_{\star,T}^{2\sigma} \norm{\psi}{L^2(\Delta(T,\psi))}^2}\le C_2 \|h_\star^{\sigma} \psi \|_{L^2(\Gamma)}^2,
\end{eqnarray*}
we finish the first step.

\noindent
{\bf Step 2:}
For some fixed polynomial degree $p\in\N_0$, there exists a constant $q_1\in(0,1)$ such that for all polynomials $F$ of degree $p$ on $[0,1]$ there exists some interval $I\subseteq [0,1]$ of length $|I|\geq q_1$ with 
\begin{align}
\min_{t\in I}|F(t)|\geq q_1 \,\max_{t\in [0,1]} |F(t)|.
\end{align}
Instead of considering general polynomials $\mathcal{P}^p([0,1])$ of degree $p$, it is sufficient to consider the following subset
\begin{align*}
\mathcal{M}:=\set{F\in \mathcal{P}^p([0,1])}{\|F\|_{\infty}=1}.
\end{align*}
Note that $\mathcal{M}$ is a compact subset of $L^{\infty}([0,1])$ and that differentiation $(\cdot)'$ is a continuous mapping on $\mathcal{M}$ due to finite dimension.
This especially implies boundedness $\sup_{F\in \mathcal{M}} \norm{F'}{\infty}\leq C_4<\infty$.
We may assume $C_4>2$.
For given $F\in \mathcal{M}$, we define the interval $I$ as follows:
Without loss of generality, we assume that the maximum of $|F|$ is attained  at some $t_1\in[0,1/2]$ and that $F(t_1)=1$.
We set $t_3:=t_1+C_4^{-1}\in (t_1,1]$ and $t_2:=t_1+C_4^{-1}/2\in (t_1,3/4]$ and $I:=[t_1,t_2]$.
Then, $|I|=1/(2C_4)$ and for all $t\in I$ it holds
\begin{align*}
1/2\le  C_4(t_3-t)=F(t_1)+C_4(t_1-t)\leq F(t_1)+\norm{F'}{\infty}(t_1-t)\le F(t)=|F(t)|.
\end{align*}
Altogether, we thus have
\begin{align*}
q_1:= 1/(2C_4)\leq 1/2 \le \min_{t\in I} |F(t)| \quad \text{and} \quad |I|=q_1
\end{align*}
and conclude this step.

\noindent
{\bf Step 3:}
We show that $\XX_\star$ satisfies the assumption of Step 1 and hence conclude $\norm{h_\star^{\sigma}\Psi_\star}{L^2(\Gamma)}$ $\lesssim \norm{\Psi_\star}{\H^{-\sigma}(\Gamma)}$ for all $\Psi_\star\in\XX_\star$:
Let $\check{T}\subset [a,b]$ be the interval with $\gamma(\check{T})=T$ and  $\check{\psi}:=\psi\circ\gamma|_{\check{T}}$.
Since $|I|\simeq |\gamma(I)|$ for any interval $I\subseteq [a,b]$, where the hidden constants  depend only on $\gamma$, we just have to find a uniform constant $q_2\in (0,1)$ and some interval $I\subseteq \check{T}$ of length $|I|\geq q_2 |\check{T}|$ with 
\begin{align}\label{eq:parameter NURBS ass}
\min_{t\in I}|\check{\psi}(t)|\geq q_2 \max_{x\in \check{T}} |\check{\psi}(t)|.
\end{align} 
The function $\check{\psi}$ has the form $F/w$ with a polynomial $F$ of degree $p$ and the weight function $w$, which is also a polynomial of degree $p$ and which satisfies
$w_{\min}\leq w\leq w_{\max}$.
Hence, \eqref{eq:parameter NURBS ass} is especially satisfied if
\begin{align}\label{eq:parameter spline ass}
\min_{t\in I}|F(t)|\geq q_1\,\frac{w_{\max}}{w_{\min}} \, \max_{x\in \check{T}} |F(t)|.
\end{align}
After scaling to the interval $[0,1]$, we can apply Step 2 and conclude this step.
Altogether, this  proves \eqref{eq:Cinvsig}.

\noindent
{\bf Step 4:}
According to~\cite{invest}, it holds $\norm{h_\star^{1/2}\partial_\Gamma(V\psi)}{L^2(\Gamma)}\lesssim \norm{h_\star^{1/2}\psi}{L^2(\Gamma)}+\norm{\psi}{\H^{-1/2}(\Gamma)}$ for all $\psi \in L^2(\Gamma)$, where the hidden constant depends only on $\Gamma$, $\gamma$, and $\check{\kappa}_{\max}$.
Together with Step 3, this shows \eqref{eq:Cinv}.
\end{proof}

The proof of linear convergence~\eqref{eq:R-linear} will be done with the help of some auxiliary (and purely theoretical) error estimator $\widetilde\rho_\star$. The latter relies on the following definition of an equivalent mesh-size function which respects the multiplicity of the knots.

\begin{proposition}\label{lem:mesh function}
Assumption~\ref{ass:mesh assumption} {\rm (M1)} implies the existence of a modified mesh-size function $\widetilde{h}:[\T]\to L^\infty(\Gamma)$ with the following properties:
There exists a constant $\Chup>0$ and $0<\qctr<1$ which depend only on $\check{\kappa}_{\max}, p$ and $\gamma$ such that for all $[\TT_\star]\in [\T]$ and all refinements $[\TT_{+}]\in\refine([\TT_\star])$, the corresponding mesh-sizes $\widetilde{h}_\star:=\widetilde{h}([\TT_\star])$ and $\widetilde{h}_{+}:=\widetilde{h}([\TT_{+}])$ satisfy equivalence
\begin{align}
\Chdown \check{h}_\star\le \widetilde{h}_\star\le \Chup \check{h}_\star,
\end{align}
reduction
\begin{align}
\widetilde{h}_{+}\le \widetilde{h}_\star,
\end{align}
as well as contraction on the patch of refined elements
\begin{align}
\widetilde{h}_{+}|_{\omega_{+}([\TT_+]\setminus [\TT_\star])}\le \qctr \widetilde{h}_\star|_{\omega_{+}([\TT_+]\setminus [\TT_\star])}.
\end{align}
Note that   $\omega_{+}( [\TT_{+}]\setminus[\TT_{\star}])=\omega_{{\star}}( [\TT_{\star}]\setminus[\TT_{+}])$, which follows from $\bigcup([\TT_+]\setminus [\TT_\star])=\bigcup ([\TT_\star]\setminus[\TT_+])$ and the fact that the application of $\omega_{+}$ resp. $\omega_{\star}$ only adds elements of $\TT_{\star}\cap\TT_{+}$.
%We also denote $\widetilde{h}([\TT_\star])=\widetilde{h}_\star$ and $\widetilde{h}([\TT_{+}])=\widetilde{h}_{+}$.
\end{proposition}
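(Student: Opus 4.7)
The plan is to construct $\widetilde{h}$ so that it combines the physical element length $\check{h}_\star$ with a penalty reflecting the knot multiplicities at the element's endpoints, ensuring that both bisection and multiplicity increase are felt. A natural first candidate is the per-element weighting
\begin{align*}
\widetilde{h}_\star|_T := \check{h}_{\star,T}\,q^{\#z_{T,1}+\#z_{T,2}}
\end{align*}
for a parameter $q\in(0,1)$ to be fixed below. Since the multiplicities lie in $\{1,\ldots,p+1\}$, the factor $q^{\#z_{T,1}+\#z_{T,2}}$ is trapped between $q^{2(p+1)}$ and $q^2$, which immediately yields the equivalence $\Chdown\check{h}_\star\le\widetilde{h}_\star\le\Chup\check{h}_\star$ with constants depending only on $p$ and $q$.

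The central technical tool is a pigeonhole consequence of~{\rm (M1)}: whenever $T\in\TT_\star$ is subdivided in going from $\TT_\star$ to $\TT_+$, every resulting piece $T'\subsetneq T$ in $\TT_+$ satisfies $\check{h}_{+,T'}\le q_0\,\check{h}_{\star,T}$ with $q_0:=\check{\kappa}_{\max}/(1+\check{\kappa}_{\max})\in(0,1)$. Indeed, the longest piece necessarily has a $\TT_+$-neighbor inside $T$, and~{\rm (M1)} bounds its length by the fraction $q_0$ of $\check{h}_{\star,T}$. Equipped with this, I would prove the pointwise reduction $\widetilde{h}_+\le\widetilde{h}_\star$ by case analysis on $T'\subseteq T$ with $T'\in\TT_+$ and $T\in\TT_\star$. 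If $T'=T$ in partition, the endpoint multiplicities can only increase, so $q^{\#z_{T',1}+\#z_{T',2}}\le q^{\#z_{T,1}+\#z_{T,2}}$. If $T'\subsetneq T$, then $\check{h}_{+,T'}\le q_0\,\check{h}_{\star,T}$, while the multiplicity sum can drop by at most $p$ per refinement step (worst case: shared endpoint of multiplicity $p+1$ versus new endpoint of multiplicity $1$), giving $\widetilde{h}_{+,T'}/\widetilde{h}_{\star,T}\le q_0\,q^{-p}$. Choosing $q\in(q_0^{1/p},1)$ (any $q\in(0,1)$ for $p=0$) forces $q_0\,q^{-p}<1$ and hence strict contraction on every refined element, with factor $\max(q,\,q_0\,q^{-p})<1$; the general case $[\TT_+]\in\refine([\TT_\star])$ then follows step-by-step along a chain of single-step refinements.

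The main obstacle --- and the reason that the first candidate above must be enhanced --- is that the proposition requires the strict contraction on the \emph{full} patch $\omega_+([\TT_+]\setminus[\TT_\star])$, which by the equality noted in the statement also contains unrefined $\TT_\star\cap\TT_+$-neighbors of refined elements. On such a neighbor the per-element definition is blind to the refinement of its neighbor and produces only equality. To remedy this, the definition has to respond to neighborhood changes, e.g., by replacing the value on $T$ by a patch-weighted quantity such as
\begin{align*}
\widetilde{h}_\star|_T:=\sum_{T''\in\omega_\star(T)}\check{h}_{\star,T''}\,q^{\#z_{T'',1}+\#z_{T'',2}},
\end{align*}
so that any bisection or multiplicity increase inside $\omega_\star(T)$ strictly decreases the value on $T$. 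Reduction then has to be re-verified term by term, using the subdivision bound $\check{h}_{+,T'}\le q_0\,\check{h}_{\star,T}$ together with the worst-case multiplicity factor $q^{-p}$ identified above; the quantitative choice $q\in(q_0^{1/p},1)$ is exactly what is needed to keep the interaction between length shrinkage and multiplicity drop under control. Combined with~{\rm (M1)} to uniformly compare the individual patch contributions in terms of $\check{h}_{\star,T}$, the strict per-term decrease then translates into a single contraction constant $\qctr\in(0,1)$ on the entire patch, depending only on $\check{\kappa}_{\max}$, $p$, and $\gamma$ as required.
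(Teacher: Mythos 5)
Your modified mesh-size function is the same basic idea as the paper's --- a patch-based quantity whose value at $T$ is a product/combination of the parameter-length of $\omega_\star(T)$ with a $q$-power that penalizes the knot multiplicities in the patch --- and you correctly identify that a purely element-local definition cannot see refinement of a neighbor, so you need the patch. The paper uses the \emph{multiplicative} form
\begin{align*}
\widetilde h_\star|_T := |\gamma^{-1}(\omega_\star(T))|\cdot q_1^{\sum_{z\in\NN_\star\cap\omega_\star(T)}\#z},
\end{align*}
which disentangles length shrinkage and multiplicity increase: refinement of any node or element in $\omega_\star(T)$ either multiplies the length factor by some $q_2<1$ (pure bisection, via (M1)) or multiplies the $q_1$-power by $q_1$ (multiplicity increase), and in the bisection case the $q_1$-power can only ``lose'' a factor $q_1^{-4p}$ since there are at most four nodes in the patch. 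Choosing $q_1$ large enough so that $q_2/q_1^{4p}<1$ makes contraction immediate and clean.

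Your additive form $\widetilde h_\star|_T=\sum_{T''\in\omega_\star(T)}\check h_{\star,T''}\,q^{\#z_{T'',1}+\#z_{T'',2}}$ is equivalent up to constants, but your proof of contraction via ``per-term decrease'' has a genuine gap. When an interior element $T_j$ of a patch is bisected into $T_j^a,T_j^b$, the three terms of $\widetilde h_+|_{T_j^a}$ (over $T_{j-1},T_j^a,T_j^b$) cannot be matched one-to-one with the three terms of $\widetilde h_\star|_{T_j}$ (over $T_{j-1},T_j,T_{j+1}$): both new interior terms come from the single parent $T_j$, and there is no new term corresponding to the old $T_{j+1}$-contribution. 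If one nonetheless tries the natural pairing $T_j^b\leftrightarrow T_{j+1}$, the ratio of the corresponding terms involves $\check h_{+,T_j^b}/\check h_{\star,T_{j+1}}\le q_0\check{\kappa}_{\max}$ as well as a possible multiplicity mismatch $q^{-p}$, so it can exceed $1$ as soon as $\check{\kappa}_{\max}>(1+\sqrt 5)/2$ --- there is no per-term reduction, let alone contraction. One is forced to compare the two \emph{sums}, and the cancellation one needs (roughly $q^{2p}\ge q_0$) is quantitatively more demanding than your stated choice $q\in(q_0^{1/p},1)$, which only guarantees $q^{2p}>q_0^2$ and therefore does not suffice. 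The paper avoids this bookkeeping entirely by its multiplicative formula, and I recommend you follow that route: it lets you handle the bisection case (shrink $|\gamma^{-1}(\omega)|$ by a fixed factor, lose at most $q^{-4p}$ in the multiplicity penalty) and the multiplicity-increase case (gain a factor $q$) with a single parameter choice, without ever matching patch terms across refinement.
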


\begin{proof}
For all $[\TT_\star]\in\T$, we define $\widetilde{h}_\star\in L^\infty(\Gamma)$ by 
\begin{align*}
\widetilde{h}_\star|_T=|\gamma^{-1}(\omega_\star(T))| \cdot q_1^{\sum_{z\in\NN_\star\cap\omega_\star(T)}\#z} \quad\text{for all }T\in\TT_\star,
\end{align*}
where $0<q_1<1$ is fixed later. Clearly, $\widetilde h_\star\simeq \check{h}_\star$, where the hidden equivalence constants depend only on $\check{\kappa}_\star$, $p$, and $q_1$.
Let $x\in\Gamma$. First, suppose $x\not\in\omega_{+}([\TT_{+}]\setminus[\TT_\star])\cup\NN_{+}$, i.e., neither
the element $[T]\in[\TT_{+}]$ containing $x$ nor its neighbors result from  $h$-refinement or from multiplicity increase.
Then, $\widetilde{h}_{+}(x)=\widetilde{h}_\star(x)$.
Second, suppose $x\in \omega_{+}([\TT_{+}]\setminus[\TT_\star])\setminus\NN_{+}$, i.e., the element $[T']\in [\TT_{+}]$ containing $x$ or one of its neighbors result from $h$-refinement and/or multiplicity increase.
If only multiplicity increase took place, we get 
\begin{align*}
q_1^{\sum_{z\in\NN_+\cap\omega_+(T')} \# z}\leq q_1\cdot q_1^{\sum_{z\in\NN_\star\cap\omega_\star(T)} \# z}.
\end{align*}
In the other case, consider the father $[T]\in[\TT_\star]$ of $[T']$, i.e., $T'\subseteq T$.
%Obviously $|\omega_\star(T)$ contracts uniformly to $|\omega_{+}(T')|$
Note that
\begin{align*}
|\gamma^{-1}(\omega_{+}(T'))|\le q_2\, |\gamma^{-1}(\omega_\star(T))|
\end{align*}
with a constant $0<q_2<1$  which depends only on $\check{\kappa}_{\max}$. 
Choose $0<q_1<1$ sufficiently large such that 
\begin{align*}
{q_2}/{q_1^{4p}}<1.
\end{align*} 
This choice yields $\widetilde{h}_{+}(x)\le ({q_2}/q_1^{4p}) \cdot\widetilde{h}_\star (x)$, since $\NN_\star\cap\omega_\star(T)$ contains at most $4$ nodes.
Therefore, we conclude the proof with $\qctr:=\max(q_1,{q_2}/q_1^{4p})$.
\end{proof}

\begin{remark}
Note that the construction of $\widetilde h_\star$ in Proposition~\ref{lem:mesh function} even ensures contraction
$\widetilde{h}_{+}|_{\omega_{+}(T)}\le \qctr \widetilde{h}_\star|_{\omega_{+}(T)}$
if ${[T]\in[\TT_{+}]\setminus [\TT_\star]}$ is obtained by $h$-refinement, while the multiplicity of all nodes $z\in\NN_+\cap\omega_+(T)$ is arbitrarily chosen $\#z\in\{1,\dots,p+1\}$.
In explicit terms, this allows for instance to set the multiplicity of all nodes $z\in\NN_+\cap\omega_+(T)$ to $\#z:=1$, if $T$ is obtained by $h$-refinement.\qed
\end{remark}
%\improve{Was soll das hei{\ss}en? Bei $\refine$ wurde gefordert, dass Knoten nur mehr werden...}%

For any $[\TT_\star]\in[\T]$, we define the auxiliary estimator
\begin{align}\label{eq:tilde rho}
\widetilde{\rho}^{\,2}_\star:=\sum_{T\in\TT} \widetilde{\rho}^{\,2}_{\star}(T)
%=\norm{\widetilde{h}_\star^{1/2}\partial_\Gamma (f-V\Phi_\star)}{L^2(\Gamma)}^2 
\quad \text{with}\quad
\widetilde{\rho}^{\,2}_{\star}(T):=\norm{\widetilde{h}_\star^{1/2}\partial_\Gamma (f-V\Phi_\star)}{L^2(T)}^2
\end{align}
which employs the novel mesh-size function $\widetilde h_\star$ from Proposition~\ref{lem:mesh function}.
{Obviously the estimators $\mu_\star$ and $\widetilde{\rho}_\star$ are locally equivalent
\begin{align}\label{eq:loceqtilde}
{\widetilde{\rho}_\star^{\,2}(T) \lesssim \mu_\star^2(z)\lesssim\sum_{T'\in\TT_\star\atop z\in T'} \widetilde\rho_\star^{\,2}(T')
\quad\text{for all }z\in \NN_\star\text{ and }T\in\TT_\star \text{ with }z\in T,}
\end{align}
where the hidden constants depend only on $\check{\kappa}_{\max}$, $p$, and $\gamma$.
}
The proof of the following lemma is inspired by \cite[Proposition 3.2]{fkmp} resp. \cite[Lemma 8.8]{axioms}, where only $h$-refinement is considered.

\begin{lemma}[estimator reduction of $\widetilde{\rho}$] 
Algorithm~\ref{the algorithm} guarantees  
\begin{align}\label{eq:estimator reduction}
\widetilde{\rho}_{\ell+1}^{\,2} \leq \qest\widetilde{\rho}_\ell^{\,2} +\Cest \norm{\Phi_{\ell+1}-\Phi_\ell}{\H^{-1/2}(\Gamma)}^2\quad \text{for all }\ell\geq 0.
\end{align}
The  constants $0<\qest<1$ and $\Cest>0$ depend only on $\check{\kappa}_{\max},p,w_{\min},w_{\max},\gamma,$ and $\theta$. 
\end{lemma}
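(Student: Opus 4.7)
The strategy is the now-standard estimator-reduction argument (as in \cite{fkmp} and \cite[Section~8]{axioms}), but adapted to the new mesh-size function $\widetilde h_\star$ so that multiplicity increases participate in the contraction. I split
\begin{align*}
\widetilde\rho_{\ell+1}^{\,2} = \norm{\widetilde h_{\ell+1}^{1/2}\partial_\Gamma(f-V\Phi_{\ell+1})}{L^2(\Gamma)}^2
\end{align*}
via $f-V\Phi_{\ell+1} = (f-V\Phi_\ell) - V(\Phi_{\ell+1}-\Phi_\ell)$ and the Young inequality $(a+b)^2\le(1+\delta)a^2+(1+\delta^{-1})b^2$ for a small parameter $\delta>0$ to be chosen at the end. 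This produces a ``perturbed old residual'' term and an ``increment'' term that I treat separately.

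\textbf{Old residual with mesh contraction.} Set $\Omega_{\rm ref}:=\omega_{+}([\TT_{\ell+1}]\setminus[\TT_\ell])=\omega_\ell([\TT_\ell]\setminus[\TT_{\ell+1}])$. Proposition~\ref{lem:mesh function} gives $\widetilde h_{\ell+1}\le\qctr\widetilde h_\ell$ on $\Omega_{\rm ref}$ and $\widetilde h_{\ell+1}\le\widetilde h_\ell$ elsewhere, whence
\begin{align*}
\norm{\widetilde h_{\ell+1}^{1/2}\partial_\Gamma(f-V\Phi_\ell)}{L^2(\Gamma)}^2
 \le \widetilde\rho_\ell^{\,2} - (1-\qctr)\norm{\widetilde h_\ell^{1/2}\partial_\Gamma(f-V\Phi_\ell)}{L^2(\Omega_{\rm ref})}^2.
\end{align*}
D\"orfler marking \eqref{eq:Doerfler} combined with $\MM_\ell\subset\bigcup([\TT_\ell]\setminus[\TT_{\ell+1}])$ ensures that for every $z\in\MM_\ell$ each element $T\in\TT_\ell$ containing $z$ satisfies $T\subset\Omega_{\rm ref}$. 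Together with the local equivalence \eqref{eq:loceqtilde} between $\mu_\ell(z)^2$ and $\sum_{T\ni z}\widetilde\rho_\ell^{\,2}(T)$, this yields
\begin{align*}
\theta\,\widetilde\rho_\ell^{\,2} \lesssim \theta\,\mu_\ell^2 \le \sum_{z\in\MM_\ell}\mu_\ell(z)^2 \lesssim \norm{\widetilde h_\ell^{1/2}\partial_\Gamma(f-V\Phi_\ell)}{L^2(\Omega_{\rm ref})}^2.
\end{align*}
Therefore there exists a constant $\kappa\in(0,1)$ depending only on $\theta,\qctr,\check\kappa_{\max},p,\gamma$ such that
\begin{align*}
\norm{\widetilde h_{\ell+1}^{1/2}\partial_\Gamma(f-V\Phi_\ell)}{L^2(\Gamma)}^2 \le \kappa\,\widetilde\rho_\ell^{\,2}.
\end{align*}

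\textbf{Increment via the inverse estimate.} Using the equivalence $\widetilde h_{\ell+1}\le\Chup\check h_{\ell+1}\simeq h_{\ell+1}$ and the fact that $\Phi_{\ell+1}-\Phi_\ell\in\XX_{\ell+1}$, the second inverse estimate \eqref{eq:Cinv} of Proposition~\ref{prop:inverse estimate} gives
\begin{align*}
\norm{\widetilde h_{\ell+1}^{1/2}\partial_\Gamma V(\Phi_\ell-\Phi_{\ell+1})}{L^2(\Gamma)}^2
 \lesssim \Cinv^2\,\norm{\Phi_{\ell+1}-\Phi_\ell}{\H^{-1/2}(\Gamma)}^2.
\end{align*}
Combining both bounds yields
\begin{align*}
\widetilde\rho_{\ell+1}^{\,2}
 \le (1+\delta)\kappa\,\widetilde\rho_\ell^{\,2} + (1+\delta^{-1})\,C\,\norm{\Phi_{\ell+1}-\Phi_\ell}{\H^{-1/2}(\Gamma)}^2,
\end{align*}
and choosing $\delta>0$ small enough so that $\qest:=(1+\delta)\kappa<1$ gives \eqref{eq:estimator reduction} with $\Cest:=(1+\delta^{-1})C$.

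The main obstacle is the bookkeeping in the old-residual step: one must verify that each marked node indeed forces all adjacent elements into the patch $\Omega_{\rm ref}$ (i.e.~the interplay between $\MM_\ell\subset\bigcup([\TT_\ell]\setminus[\TT_{\ell+1}])$ guaranteed by the refinement strategy and the patch identity in Proposition~\ref{lem:mesh function}), so that the local equivalence \eqref{eq:loceqtilde} can transfer the D\"orfler mass of $\mu_\ell$ to $\widetilde\rho_\ell$ restricted to $\Omega_{\rm ref}$. Everything else reduces to standard Young/inverse-estimate manipulations.
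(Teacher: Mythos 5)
Your proposal is correct and follows essentially the same route as the paper's proof: mesh-contraction from Proposition~\ref{lem:mesh function}, the inverse estimate~\eqref{eq:Cinv} for the increment, the local equivalence~\eqref{eq:loceqtilde} to transfer D\"orfler mass from $\mu_\ell$ to $\widetilde\rho_\ell$ restricted to $\omega_\ell([\TT_\ell]\setminus[\TT_{\ell+1}])$, and Young's inequality with a final choice of small $\delta$. The only (cosmetic) difference is the order of operations: the paper first splits the integral over $\omega_{\ell+1}([\TT_{\ell+1}]\setminus[\TT_\ell])$ and its complement and applies Young on each piece separately, whereas you apply Young once globally and then invoke the domain split only on the old-residual term; both lead to the same form of estimate, yours with a marginally smaller factor in $\Cest$.
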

\begin{proof}
The proof is done in several steps.

\noindent
{\bf Step 1:} With the inverse estimate \eqref{eq:Cinv},  there holds the following stability property for any measurable $\Gamma_0\subseteq \Gamma$
\begin{align*}
\begin{split}
&\left|\norm{ \widetilde{h}_{\ell+1}^{1/2} \partial_\Gamma (f-V\Phi_{\ell+1})}{L^2(\Gamma_0)}-\norm{ \widetilde{h}_{\ell+1}^{1/2} \partial_\Gamma (f-V\Phi_{\ell})}{L^2(\Gamma_0)}\right|
\\ &\quad\leq \norm{ \widetilde{h}_{\ell+1}^{1/2} \partial_\Gamma V(\Phi_{\ell+1}-\Phi_{\ell})}{L^2(\Gamma_0)}\leq C \norm{\Phi_{\ell+1}-\Phi_\ell}{\H^{-1/2}(\Gamma)},
\end{split}
\end{align*}
with a constant $C>0$ which depends only on $\Chup,\CinvV$, and $\gamma$.

\noindent
{\bf Step 2:}
With Proposition~\ref{lem:mesh function}, we split the estimator into a contracting and into a non-contracting part
\begin{align*}
\widetilde{\rho}_{\ell+1}^{\,2}=\norm{ \widetilde{h}_{\ell+1}^{1/2} \partial_\Gamma(f-V\Phi_{\ell+1})}{L^2(\omega_{\ell+1}([\TT_{\ell+1}]\setminus[\TT_{\ell}]))}^2+\norm{\widetilde{h}^{1/2}_{\ell+1} \partial_\Gamma(f-V\Phi_{\ell+1})}{L^2(\Gamma\setminus \omega_{\ell+1}([\TT_{\ell+1}]\setminus[\TT_{\ell}]))}
\end{align*}
%Note that $\bigcup ([\TT_{\ell+1}]\setminus[\TT_{\ell}])=\bigcup ([\TT_{\ell}]\setminus[\TT_{\ell+1}])$.
%By applying $\omega_{\ell+1}$ resp. $\omega_{\ell}$, one only adds elements of $\TT_{\ell}\cap\TT_{\ell+1}$, wherefore {$\omega_{{\ell+1}}( [\TT_{\ell+1}]\setminus[\TT_{\ell}])=\omega_{{\ell}}( [\TT_{\ell}]\setminus[\TT_{\ell+1}])$}.
Step 1, the Young inequality, and Proposition~\ref{lem:mesh function} show, for arbitrary $\delta>0$, that
\begin{align*}
&\norm{ \widetilde{h}_{\ell+1}^{1/2}\partial_\Gamma (f-V\Phi_{\ell+1})}{L^2(\omega_{{\ell+1}}([\TT_{\ell+1}]\setminus[\TT_{\ell}]))}^2 \\
&\quad \leq (1+\delta) \norm{ \widetilde{h}_{\ell+1}^{1/2} \partial_\Gamma (f-V\Phi_{\ell})}{L^2(\omega_{{\ell+1}}([\TT_{\ell+1}]\setminus[\TT_{\ell}]))}^2+(1+\delta^{-1}) C^2 \norm{\Phi_{\ell+1}-\Phi_\ell}{\H^{-1/2}(\Gamma)}^2\\
&\quad\leq (1+\delta) \qctr\norm{ \widetilde{h}_{\ell}^{1/2} \partial_\Gamma (f-V\Phi_\ell)}{L^2(\omega_{{\ell}}([\TT_{\ell}]\setminus[\TT_{\ell+1}]))}^2 +(1+\delta^{-1}) C^2 \norm{\Phi_{\ell+1}-\Phi_\ell}{\H^{-1/2}(\Gamma)}^2.
\end{align*}
Analogously, we get
\begin{align*}
&\norm{ \widetilde{h}_{\ell+1}^{1/2}\partial_\Gamma (f-V\Phi_{\ell+1})}{L^2(\Gamma\setminus\omega_{{\ell+1}}([\TT_{\ell+1}]\setminus[\TT_{\ell}]))}^2 \\
&\quad\leq (1+\delta) \norm{ \widetilde{h}_{\ell}^{1/2} \partial_\Gamma(f-V\Phi_\ell)}{L^2(\Gamma\setminus \omega_{{\ell}}([\TT_{\ell}]\setminus[\TT_{\ell+1}]))}^2 +(1+\delta^{-1}) C^2 \norm{\Phi_{\ell+1}-\Phi_\ell}{\H^{-1/2}(\Gamma)}^2.
\end{align*}
Combining these estimates, we end up with
\begin{align}\label{eq:final rho lp1}
\begin{split}
\widetilde{\rho}_{\ell+1}^{\,2}&\leq (1+\delta)\widetilde{\rho}_\ell^{\,2}-(1+\delta)(1- \qctr)\norm{ \widetilde{h}_{\ell}^{1/2} \partial_\Gamma(f-V\Phi_\ell)}{L^2(\omega_{{\ell}}([\TT_{\ell}]\setminus[\TT_{\ell+1}]))}^2 \\&\quad +2(1+\delta^{-1}) C^2 \norm{\Phi_{\ell+1}-\Phi_\ell}{\H^{-1/2}(\Gamma)}^2.
\end{split}
\end{align}
{\bf Step 3:} { Local equivalence \eqref{eq:loceqtilde} and the D\"orfler marking \eqref{eq:Doerfler} for $\mu_\ell$ imply}
\begin{align*}
\theta \widetilde{\rho}_\ell^{\,2}\simeq \theta \mu_\ell^2 \leq \sum_{z\in \MM_\ell}\mu_\ell(z)^2\simeq \sum_{\substack{T\in \TT_\ell\\T\subseteq\omega_{{\ell}}(\MM_\ell)}} \widetilde{\rho}_\ell(T)^2,
\end{align*}
where the hidden constants depend only on $\check{\kappa}_{\rm max}$, $p$, and $\gamma$.
%where the hidden constants only depend on $\qmult$ and $\kappa$.
Hence, $\widetilde{\rho}_\ell$ satisfies some D\"orfler marking  with a certain parameter $0<\widetilde{\theta}<1$.
With  $\MM_\ell\subseteq \bigcup([\TT_\ell]\setminus [\TT_{\ell+1}])$, \eqref{eq:final rho lp1} hence becomes 
\begin{align*}
\widetilde{\rho}_{\ell+1}^{\,2}&\leq \left((1+\delta)-(1+\delta)(1- \qctr)\widetilde\theta\right)\widetilde\rho_\ell^{\,2} +2(1+\delta^{-1}) C^2 \norm{\Phi_{\ell+1}-\Phi_\ell}{\H^{-1/2}(\Gamma)}^2.
\end{align*}
Choosing $\delta$ sufficiently small, we prove \eqref{eq:estimator reduction} with $\Cest:=2(1+\delta^{-1})C^2$ and 
$\qest:=(1+\delta)\big(1-(1-\qctr)\widetilde{\theta}\big)<1$.
\end{proof}

\begin{proof}[Proof of linear convergence \eqref{eq:R-linear}]
Due to the properties of the weakly-singular integral operator $V$, the bilinear form $A(\phi,\psi):=\dual{V\phi}{\psi}_{\Gamma}$ defines even a scalar product, and the induced norm $\norm{\psi}V:=A(\psi,\psi)^{1/2}$ is an equivalent norm on $\H^{-1/2}(\Gamma)$. According to nestedness of the ansatz spaces $\XX_{\ell}\subset\XX_{\ell+1}$, the Galerkin orthogonality implies the Pythagoras theorem
\begin{align*}
 \norm{\phi-\Phi_{\ell+1}}{V}^2 + \norm{\Phi_{\ell+1}-\Phi_\ell}{V}^2 = \norm{\phi-\Phi_\ell}{V}^2
 \quad\text{for all }\ell\in\N_0.
\end{align*}
Together with the estimator reduction~\eqref{eq:estimator reduction} and reliability \eqref{eq:reliability} 
\begin{align*}
 \norm{\phi-\Phi_\ell}{V} \simeq \norm{\phi-\Phi_\ell}{\H^{-1/2}(\Gamma)} 
 \lesssim \mu_\ell \simeq \widetilde\rho_\ell,
\end{align*}
this implies the existence of $0<\kappa,\lambda<1$, {which depend only on $\Crel,\Cest$ and $\qest$}, such that $\Delta_\star:=\norm{\phi-\Phi_\star}{V}^2+\lambda\,\widetilde\rho_\star^{\,2}\simeq\widetilde\rho_\star^{\,2} $
satisfies
\begin{align*}
 \Delta_{\ell+1} \le \kappa\,\Delta_\ell
 \quad\text{for all }\ell\in\N_0;
\end{align*}
see, e.g., \cite[Theorem~4.1]{fkmp}, while the original idea goes back to~\cite{ckns}. From this, we infer 
\begin{align*}
 \mu_{\ell+n}^2\simeq\,\widetilde\rho_{\ell+n}^{\,2} \simeq \Delta_{\ell+n} \le \kappa^n\,\Delta_\ell
 \simeq\kappa^n\,\widetilde\rho_{\ell}^{\,2} \simeq \kappa^n\,\mu_{\ell}^2
 \quad\text{for all }\ell,n\in\N_0
\end{align*}
and hence conclude the proof.
%The hidden constants depend on $\Gamma$, the reliability estimate, the estimator reduction~\eqref{eq:estimqatorreduction}, and the equivalence $\widetilde\rho_\star\simeq\mu_\star$ of the estimators.
\end{proof}%

\begin{comment}
 Due to (M1) and Proposition~\ref{lem:mesh function} there holds equivalence $\widetilde{\rho}_\star\simeq \mu_\star$.
 Hence, it is sufficient to prove linear convergence of $(\rho_\ell)_{\ell\in \N}$. 
%The proof of this reads nearly verbatim as \cite[Proposition 4.10]{axioms}, where in our case there even holds a stronger version of \emph{general quasi-orthogonality}, \cite[Section 3.1]{axioms}. 
Galerkin orthogonality and reliability \eqref{eq:residual}  imply
\begin{align*}
&\sum_{k=\ell}^N\norm{\Phi_{k+1}-\Phi_k}{\H^{-1/2}(\Gamma)}^2\simeq\sum_{k=\ell}^N\norm{\Phi_{k+1}-\Phi_k}{V}^2\\
&\quad=\sum_{k=\ell}^N\big(\norm{\phi-\Phi_k}{V}^2-\norm{\phi-\Phi_{k+1}}{V}^2\big)\le \norm{\phi-\Phi_{\ell}}{V}^2\lesssim \norm{\phi-\Phi_\ell}{\H^{-1/2}(\Gamma)}\lesssim\mu_\ell^2\simeq \widetilde{\rho}_\ell^{\,2}.
\end{align*}
for arbitrary $\ell,N\in \N$ with $N\ge \ell$.
{\color{red} Note that the reliability constant depends only on $\check{\kappa}_{\max}, p, w_{\min},w_{\max}$ and $\gamma$, see \cite[Theorem 3.1 and 3.4]{resigabem}.}
Together with the estimator reduction \eqref{eq:estimator reduction},
\cite[Proposition 4.10]{axioms} applies and proves $\widetilde{\rho}_{\ell+1} \le \widetilde{C}_{\rm lin} \qlin^n \widetilde{\rho}_\ell$.
%The constants $\widetilde{C}_{\rm lin}>0$ and $0<\qlin<1$ depend only on $\check{\kappa}_{\max}, p, \gamma$ and $\Cest$.
\end{proof}
\end{comment}
%\newpage

%%%%%%%%%%%%%%%%%%%%%%%%%%%%%%%%%%%%%%%%%%%%%%%%%%%%%%%%%%%%%%%%%%%%%%%%%%%%%%%%
%%%%%%%%%%%%%%%%%%%%%%%%%%%%%%%%%%%%%%%%%%%%%%%%%%%%%%%%%%%%%%%%%%%%%%%%%%%%%%%%%
\section{Proof of Theorem~\ref{thm:main}, optimal convergence~\eqref{eq:optimal}}
\label{section:optimal}

%%%%%%%%%%%%%%%%%%%%%%%%%%%%%%%%%%%%%%%%%%%%%%%%%%%%%%%%%%%%%%%%%%%%%%%%%%%%%%%%%
%%%%%%%%%%%%%%%%%%%%%%%%%%%%%%%%%%%%%%%%%%%%%%%%%%%%%%%%%%%%%%%%%%%%%%%%%%%%%%%%

%% !TEX encoding = UTF-8 Unicode
%%!TEX root = resigaconv.tex

As in the previous section, we define an auxiliary error estimator.
For each $[\TT_\star]\in[\T]$, let
\begin{align}\label{eq:rho}
{\rho}^2_\star:=\sum_{T\in\TT} {\rho}_{\star}(T)^2
\quad\text{with}\quad{\rho}_{\star}(T)^2:=\norm{\check{h}_\star^{1/2}\partial_\Gamma (f-V\Phi_\star)}{L^2(T)}^2.
\end{align}
%We show optimal convergence of $\rho_\star$. 
%In particular, this also implies optimal convergence of $\mu_\star$.
{Note that the estimators $\mu_\star$ and $\rho_\star$ are again locally equivalent
\begin{align}\label{eq:loceq}
{\rho_\star^{\,2}(T) \le \mu_\star^{\,2}(z)\lesssim\sum_{T'\in\TT_\star\atop z\in T'}\rho_\star^{\,2}(T')\quad\text{for all }z\in \NN_\star \text{ and } T\in\TT_\star \text{ with } z\in T,}
\end{align}
where the hidden constant depends only on $\check{\kappa}_{\max}$.
}
{Analogous versions of the next two lemmas are already proved in \cite[Proposition 4.2 and 4.3]{fkmp} for $h$-refinement and piecewise constants; see also \cite[Propostion~5.7]{axioms} for discontinuous piecewise polynomials  and $h$-refinement.
The proof for Lemma~\ref{lem:stability rho} is essentially based on Proposition~\ref{prop:inverse estimate}.
The proof of Lemma~\ref{lem:discrete reliability}  requires 
the construction of a Scott-Zhang type operator \eqref{eq:scotty} which  is not necessary in \cite{fkmp,axioms}, since both works consider discontinuous piecewise polynomials.
}
\begin{lemma}[stability of $\rho$]\label{lem:stability rho}
Let $[\TT_\star]\in [\T]$ and $[\TT_+]\in \refine(\TT_\star)$.
For $\SS\subseteq \TT_\star\cap \TT_+$ there holds
\begin{align}\label{eq:stability}
\Big|\Big(\sum_{T\in\SS} \rho_{+}(T)^2\Big)^{1/2}-\Big(\sum_{T\in\SS} \rho_\star(T)^2\Big)^{1/2}\Big| \le \Cstab \norm{\Phi_{+}-\Phi_\star}{\H^{-1/2}(\Gamma)},
\end{align}
where $\Cstab>0$ depends only on the parametrization $\gamma$ and the constant $\CinvV$ of Proposition~\ref{prop:inverse estimate} 
\end{lemma}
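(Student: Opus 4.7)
\medskip
\noindent\textbf{Proof plan for Lemma~\ref{lem:stability rho}.}

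The plan is to view the two sums as $L^2$-norms of $\check h^{1/2}\partial_\Gamma(f-V\Phi_+)$ and $\check h^{1/2}\partial_\Gamma(f-V\Phi_\star)$ over $\bigcup\SS$, apply the inverse triangle inequality, and reduce to the inverse estimate \eqref{eq:Cinv} from Proposition~\ref{prop:inverse estimate}.

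First, I exploit that $\SS\subseteq\TT_\star\cap\TT_+$: for every element $T\in\SS$, one has $T\in\TT_\star$ and $T\in\TT_+$, hence
\begin{align*}
 \check h_\star|_T = |\gamma^{-1}(T)| = \check h_+|_T.
\end{align*}
Write $\check h:=\check h_\star|_{\bigcup\SS}=\check h_+|_{\bigcup\SS}$. Then
\begin{align*}
 \Big(\sum_{T\in\SS}\rho_\pm(T)^2\Big)^{1/2}=\norm{\check h^{1/2}\partial_\Gamma(f-V\Phi_\pm)}{L^2(\bigcup\SS)},
\end{align*}
and the reverse triangle inequality in $L^2(\bigcup\SS)$ together with linearity of $V$ gives
\begin{align*}
 \Big|\Big(\sum_{T\in\SS}\rho_+(T)^2\Big)^{1/2}-\Big(\sum_{T\in\SS}\rho_\star(T)^2\Big)^{1/2}\Big|
 \le \norm{\check h^{1/2}\partial_\Gamma V(\Phi_+-\Phi_\star)}{L^2(\bigcup\SS)}.
\end{align*}

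Second, I enlarge the domain of integration from $\bigcup\SS$ to all of $\Gamma$ (using $\check h=\check h_+$ on $\bigcup\SS$ and $\check h_+\in L^\infty(\Gamma)$ elsewhere), switch from $\check h_+$ to the arclength mesh-size $h_+$ at the cost of a factor depending only on $\gamma$, and apply the inverse estimate \eqref{eq:Cinv} to the discrete function $\Phi_+-\Phi_\star\in\XX_+$ (note the nestedness $\XX_\star\subseteq\XX_+$):
\begin{align*}
 \norm{\check h^{1/2}\partial_\Gamma V(\Phi_+-\Phi_\star)}{L^2(\bigcup\SS)}
 \lesssim \norm{h_+^{1/2}\partial_\Gamma V(\Phi_+-\Phi_\star)}{L^2(\Gamma)}
 \le \CinvV\,\norm{\Phi_+-\Phi_\star}{\H^{-1/2}(\Gamma)}.
\end{align*}
Combining both steps yields \eqref{eq:stability} with $\Cstab$ depending only on $\gamma$ and $\CinvV$, as claimed.

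There is really no hard step: the identification $\check h_\star=\check h_+$ on common elements is essentially bookkeeping, and the inverse triangle inequality together with the already-established Proposition~\ref{prop:inverse estimate} does all the work. The only conceptual point worth flagging is that one must apply \eqref{eq:Cinv} on the \emph{finer} space $\XX_+$ (not $\XX_\star$), which is why it is important that $\Phi_+-\Phi_\star\in\XX_+$ and that the inverse estimate holds for every discrete space $\XX_\star\in[\T]$ with a constant independent of the particular refinement.
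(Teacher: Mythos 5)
Your proof is correct and follows essentially the same route as the paper's: identify both sums as $L^2(\bigcup\SS)$-norms with a common weight $\check h_\star=\check h_+$ on the shared elements, apply the reverse triangle inequality and linearity of $V$, and then invoke the inverse estimate~\eqref{eq:Cinv} for $\Phi_+-\Phi_\star\in\XX_+$ after passing from $\check h_+$ to $h_+$. The paper's version is marginally more terse (it states the chain of inequalities for a general $\Gamma_0\subseteq\Gamma$ and then specializes $\Gamma_0=\bigcup\SS$), but you have made explicit the same bookkeeping point about $\check h_\star|_{\bigcup\SS}=\check h_+|_{\bigcup\SS}$ that the paper uses implicitly.
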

\begin{proof}
For all subsets $\Gamma_0\subseteq \Gamma$, it holds
%$\SS\subseteq \TT_\star\cap \TT_{+}$, \eqref{eq:Cinv} shows the following stability property
\begin{align}\label{eq:prestability}
%\begin{split}
%&\left|\big(\sum_{T\in\SS} \rho_{+}(T)^2\big)^{1/2}-\big(\sum_{T\in\SS} \rho_\star(T)^2\big)^{1/2}\right| \\
\notag&\left|\norm{\check{h}_{+}^{1/2}\partial_\Gamma (f-V\Phi_{+})}{L^2(\Gamma_0)} -\norm{\check{h}_{+}^{1/2}\partial_\Gamma (f-V\Phi_\star)}{L^2(\Gamma_0)}\right| \le \norm{\check{h}_{+}^{1/2}\partial_\Gamma V(\Phi_{+}-\Phi_\star)}{L^2(\Gamma_0)} \\
&\quad\lesssim \norm{{h}_{+}^{1/2}\partial_\Gamma V(\Phi_{+}-\Phi_\star)}{L^2(\Gamma_0)} \le \CinvV \norm{\Phi_{+}-\Phi_\star}{\H^{-1/2}(\Gamma)}.
%\end{split}
\end{align}
The choice $\Gamma_0=\bigcup\SS$ shows stability
\begin{align*}%\label{eq:stability}
\Big|\Big(\sum_{T\in\SS} \rho_{+}(T)^2\Big)^{1/2}-\Big(\sum_{T\in\SS} \rho_\star(T)^2\Big)^{1/2}\Big| \le \CinvV \norm{\Phi_{+}-\Phi_\star}{\H^{-1/2}(\Gamma)},
\end{align*}
and we conclude the proof.
\end{proof}

\begin{lemma}[discrete reliability of $\rho$]\label{lem:discrete reliability} 
There exist constants $\Cdrel, \Cref>0$, which depend only on $\check{\kappa}_{\max}, p, w_{\min}, w_{\max},$ and $\gamma$, such that for all refinements $[\TT_{+}]\in \refine([\TT_\star])$ of $[\TT_\star]\in [\T]$ there exists a subset $\RR_\star(\TT_+)\subseteq \TT_\star$ with
\begin{align}
\norm{\Phi_{+}-\Phi_\star}{\H^{-1/2}(\Gamma)}^2\le \Cdrel\sum_{T\in \RR_\star(\TT_+)} \rho_\star(T)^2
\end{align}
as well as
\begin{align}
{\bigcup ([\TT_\star]\setminus[\TT_{+}])\subseteq \bigcup\RR_\star(\TT_{+})}\quad\text{and}\quad |\RR_\star(\TT_+)|\le \Cref |[\TT_\star]\setminus[\TT_{+}]|.
\end{align}
\end{lemma}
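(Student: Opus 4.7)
I follow the classical scheme for discrete reliability of weighted-residual estimators as in \cite{fkmp} and \cite[Section~5]{axioms}, adapted to the NURBS setting via the Scott--Zhang type operator $J_\star:L^2(\Gamma)\to\XX_\star$ from \eqref{eq:scotty} and the inverse estimate of Proposition~\ref{prop:inverse estimate}. Abbreviate $e:=\Phi_{+}-\Phi_\star\in\XX_{+}$. Combining nestedness $\XX_\star\subseteq\XX_{+}$ with the two Galerkin identities, together with $J_\star e\in\XX_\star$, yields
\[
\|e\|_V^2=\dual{V\Phi_{+}-V\Phi_\star}{e}_\Gamma=\dual{f-V\Phi_\star}{e}_\Gamma=\dual{f-V\Phi_\star}{(1-J_\star)e}_\Gamma,
\]
where $\|\cdot\|_V\simeq\|\cdot\|_{\H^{-1/2}(\Gamma)}$ is the energy norm induced by $V$.

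I would then define the set of refinement-affected elements
\[
\RR_\star(\TT_{+}):=\bigl\{T\in\TT_\star:\omega_\star(T)\not\subseteq{\textstyle\bigcup([\TT_\star]\cap[\TT_{+}])}\bigr\}.
\]
By the bounded patch property from (M1) this satisfies $\bigcup([\TT_\star]\setminus[\TT_{+}])\subseteq\bigcup\RR_\star(\TT_{+})$ as well as $|\RR_\star(\TT_{+})|\le\Cref\,|[\TT_\star]\setminus[\TT_{+}]|$, where $\Cref$ depends only on the uniform bound on the number of $\TT_\star$-neighbors of any element. Using the locality and projection property of $J_\star$ together with the coincidence $\XX_\star|_{\omega_\star(T)}=\XX_{+}|_{\omega_\star(T)}$ on every $T$ with $\omega_\star(T)\subseteq\bigcup([\TT_\star]\cap[\TT_{+}])$, one verifies that $(1-J_\star)e$ vanishes outside $\bigcup\RR_\star(\TT_{+})$, so the pairing reduces to a sum over $T\in\RR_\star(\TT_{+})$.

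On each such $T$, I would exploit a mean-preservation property of $J_\star$ to subtract a suitable constant $c_T$ from $f-V\Phi_\star\in H^1(\Gamma)$ (note that $V\Phi_\star\in H^1(\Gamma)$ since $\Phi_\star\in L^2(\Gamma)$ and $V:L^2\to H^1$) and apply the Poincar\'e inequality to bound
\[
\Bigl|\int_T(f-V\Phi_\star)(1-J_\star)e\,dx\Bigr|\lesssim h_{\star,T}\,\norm{\partial_\Gamma(f-V\Phi_\star)}{L^2(T)}\,\norm{(1-J_\star)e}{L^2(\omega_\star(T))}.
\]
Summing over $T\in\RR_\star(\TT_{+})$, invoking Cauchy--Schwarz, the local $L^2$-stability $\norm{(1-J_\star)e}{L^2(T)}\lesssim\norm{e}{L^2(\omega_\star(T))}$, and finally Proposition~\ref{prop:inverse estimate} with $\sigma=1/2$ applied to $e\in\XX_{+}$ gives
\[
\|e\|_V^2\lesssim\Bigl(\sum_{T\in\RR_\star(\TT_{+})}\rho_\star(T)^2\Bigr)^{1/2}\|e\|_{\H^{-1/2}(\Gamma)}\simeq\Bigl(\sum_{T\in\RR_\star(\TT_{+})}\rho_\star(T)^2\Bigr)^{1/2}\|e\|_V.
\]
Dividing by $\|e\|_V$ and squaring yields the claimed bound.

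The main obstacle is the interplay between the coarse mesh-size $h_\star$ generated by Poincar\'e and Proposition~\ref{prop:inverse estimate}, which naturally delivers a bound featuring the finer mesh-size $h_{+}$; careful exploitation of the locality of $J_\star$ on the patches $\omega_\star(T)$ and the local quasi-uniformity of $\TT_\star$ from (M1) is needed to keep the scaling consistent on the coarse mesh. A second difficulty, absent from the discontinuous piecewise polynomial setting of \cite{fkmp,axioms}, is the construction and local analysis of the Scott--Zhang operator \eqref{eq:scotty} itself: because the NURBS basis functions have connected but nontrivial supports and are non-polynomial, simultaneously ensuring the projection, locality, $L^2$-stability, and mean-preservation properties demands genuine use of the B-spline machinery from Section~\ref{subsec:splines}.
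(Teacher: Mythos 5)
Your general scheme—Scott--Zhang operator, Galerkin orthogonality, Poincar\'e, inverse estimate—is indeed the one the paper uses, but several of the steps you sketch would not go through as written, and the most subtle point you identify yourself (the $h_\star$ vs.\ $h_+$ mismatch) is precisely where the paper's argument departs from your plan.

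\textbf{The set $\RR_\star(\TT_+)$ is too small for $p\ge1$.} You define $\RR_\star(\TT_+)$ by requiring only that the $1$-patch $\omega_\star(T)$ meet a refined element, but the projection property of the NURBS quasi-interpolant (Lemma~\ref{lem:scotty}(\ref{item:scotty projection})) only gives $(J_\star\psi)|_T=\psi|_T$ when $\psi$ coincides with a discrete function on the full $p$-patch $\omega_\star^p(T)$, since degree-$p$ B-splines propagate information over $p+1$ elements. Consequently $(1-J_\star)e$ need not vanish outside your $\bigcup\RR_\star(\TT_+)$; the paper ends up with $\RR_\star(\TT_+)=\set{T\in\TT_\star}{T\subseteq\omega^{\lceil p/2\rceil+p+1}([\TT_\star]\setminus[\TT_+])}$, i.e.\ a $p$-dependent enlargement.

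\textbf{There is no element-wise mean-preservation for $J_\star$.} The operator $P_{\star,I}$ of \eqref{eq:scotty} is a dual-functional NURBS projection; it does not satisfy $\int_T(1-J_\star)e\,dx=0$ unless $\chi_T\in\XX_\star$, which holds only for $p=0$. Your Poincar\'e step therefore has an unbounded constant term. The paper removes this term not via $J_\star$ at all, but via the spline bubble functions $\psi_T\in\XX_\star$ (satisfying $\norm{1-\psi_T}{L^2}^2\le q|\supp\psi_T|$) together with Galerkin orthogonality $\dual{f-V\Phi_\star}{\psi_T}_\Gamma=0$, which is a genuinely different mechanism.

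\textbf{The mesh-size mismatch is fatal in your direct route, not merely an obstacle.} After your Poincar\'e-plus-Cauchy--Schwarz, the natural bound is
\begin{align*}
\norm{e}{V}^2 \lesssim \Big(\sum_{T\in\RR_\star}\rho_\star(T)^2\Big)^{1/2}\,\norm{h_\star^{1/2}e}{L^2(\Gamma)},
\end{align*}
but $e\in\XX_+$ and Proposition~\ref{prop:inverse estimate} on $\TT_+$ only controls $\norm{h_+^{1/2}e}{L^2(\Gamma)}$; since $h_+\le h_\star$ this is the wrong direction, and no local quasi-uniformity argument closes the gap because on refined elements $h_\star/h_+$ is unbounded over the family $[\T]$. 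What actually saves the day in the paper is the split $\dual{\Sigma}{(1-P_{\star,I})e}=\dual{\Sigma}{e}-\dual{\Sigma}{P_{\star,I}e}$: the first pairing is handled by the $H^{1/2}$--$\H^{-1/2}$ duality without any inverse estimate, while in the second pairing the specific choice of the index set $I$ forces $P_{\star,I}e$ to \emph{vanish on all refined elements}, so the relevant $L^2$ norm lives only on $\bigcup([\TT_\star]\cap[\TT_+])$ where $h_\star=h_+$. This cancellation, together with the $\Sigma=\sum_z\varphi_z(f-V\Phi_\star)$ localization and a Faermann-type localization of the $H^{1/2}$-norm, is the missing key idea in your outline.

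Your general scheme---Scott--Zhang operator, Galerkin orthogonality, Poincar\'e, inverse estimate---is indeed the one the paper uses, but several of the steps you sketch would not go through as written, and the most subtle point you identify yourself (the $h_\star$ vs.\ $h_+$ mismatch) is precisely where the paper's argument departs from your plan.

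\textbf{The set $\RR_\star(\TT_+)$ is too small for $p\ge1$.} You define $\RR_\star(\TT_+)$ by requiring only that the $1$-patch $\omega_\star(T)$ meet a refined element, but the projection property of the NURBS quasi-interpolant (Lemma~\ref{lem:scotty}(\ref{item:scotty projection})) only gives $(J_\star\psi)|_T=\psi|_T$ when $\psi$ coincides with a discrete function on the full $p$-patch $\omega_\star^p(T)$, since degree-$p$ B-splines propagate information over $p+1$ elements. Consequently $(1-J_\star)e$ need not vanish outside your $\bigcup\RR_\star(\TT_+)$; the paper ends up with $\RR_\star(\TT_+)=\set{T\in\TT_\star}{T\subseteq\omega^{\lceil p/2\rceil+p+1}([\TT_\star]\setminus[\TT_+])}$, i.e.\ a $p$-dependent enlargement.

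\textbf{There is no element-wise mean preservation for $J_\star$.} The operator $P_{\star,I}$ of \eqref{eq:scotty} is a dual-functional NURBS projection; it does not satisfy $\int_T(1-J_\star)e\,dx=0$ unless $\chi_T\in\XX_\star$, which holds only for $p=0$. Your Poincar\'e step therefore carries an uncontrolled constant term. The paper removes this term not via $J_\star$ at all, but via the spline bubble functions $\psi_T\in\XX_\star$ satisfying $\norm{1-\psi_T}{L^2(\supp\psi_T)}^2\le q\,|\supp\psi_T|$, combined with Galerkin orthogonality $\dual{f-V\Phi_\star}{\psi_T}_\Gamma=0$---a genuinely different mechanism.

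\textbf{The mesh-size mismatch is fatal in the direct route, not merely an obstacle.} After your Poincar\'e-plus-Cauchy--Schwarz the natural bound is
\begin{align*}
\norm{e}{V}^2 \lesssim \Big(\sum_{T\in\RR_\star}\rho_\star(T)^2\Big)^{1/2}\,\norm{h_\star^{1/2}e}{L^2(\Gamma)},
\end{align*}
but $e\in\XX_+$ so Proposition~\ref{prop:inverse estimate} only controls $\norm{h_+^{1/2}e}{L^2(\Gamma)}$, and since $h_+\le h_\star$ with ratio unbounded over $[\T]$, local quasi-uniformity of $\TT_\star$ cannot close this gap. The paper escapes by splitting $\dual{\Sigma}{(1-P_{\star,I})e}=\dual{\Sigma}{e}-\dual{\Sigma}{P_{\star,I}e}$: the first pairing uses the $H^{1/2}$--$\H^{-1/2}$ duality with no inverse estimate, and in the second the chosen index set $I$ forces $P_{\star,I}e$ to vanish on every refined element, so the factor $\norm{h_\star^{1/2}P_{\star,I}e}{L^2}$ is supported only where $h_\star=h_+$ and the inverse estimate on $\TT_+$ applies cleanly. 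That cancellation, together with the $\Sigma=\sum_z\varphi_z(f-V\Phi_\star)$ hat-function localization and a Faermann-type localization of the $H^{1/2}$-seminorm, is the missing key idea in your outline.
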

{For the proof of Lemma~\ref{lem:discrete reliability}, we need to introduce a Scott-Zhang type operator.
Let $[\TT_\star]\in[\T]$ and $\set{R_{i,p}|_{[a,b\rangle}}{i=1-p,\dots,N-p}\circ\gamma|_{[a,b\rangle}^{-1}$ be the basis of NURBS of $\XX_\star$, where "$\rangle$" stands for "$)$" if $\Gamma=\partial\Omega$ is closed and for "$]$" if $\Gamma\subsetneqq \partial\Omega$ is open.
Here, $N$ denotes the number of transformed knots $\check{\KK}_\star$ in $(a,b]$.
With the corresponding B-splines there holds $R_{i,p}=B_{i,p} /w$, where $w=\sum_{\ell\in \Z} w_\ell B_{\ell,p}$ is the fixed denominator satisfying $w_{\rm min}\le w\le w_{\rm max}$; see Section~\ref{section:igabem}.
In \cite[Section 2.1.5]{overview}, it is shown that, for $i\in \{1-p,\dots,N-p\}$, there exist dual basis functions $B_{i,p}^*\in L^2(\supp B_{i,p})$ with 
\begin{align}
\int_{\supp B_{i,p}} B_{i,p}^*(t) B_{j,p}(t) dt=\delta_{ij}=\begin{cases} 1\quad \text{if }i=j\\ 0\quad \text{else}\end{cases}
\end{align}
and
\begin{align}\label{eq:dual inequality}
\norm{B_{i,p}^*}{L^2(\supp B_{i,p})}\le (2p+3)9^p {|\supp B_{i,p}|^{-1/2}}.
\end{align}
Define  $R_{i,p}^*:=B_{i,p}^* w$ with the denominator $w$ from before, and $\widehat{R}_{i,p}:=R_{i,p}|_{[a,b\rangle}\circ \gamma|_{[a,b\rangle}^{-1}$.
For $I\subseteq \{1-p,\dots,N-p\}$, we define the following Scott-Zhang type operator
\begin{equation}\label{eq:scotty}
P_{\star,I}:L^2(\Gamma)\to \XX_\star:\psi\mapsto \sum_{i\in I} \Big(\int_{\supp {R}_{i,p}} R_{i,p}^*(t) \psi(\gamma(t)) \,dt \Big)\widehat{R}_{i,p}.
\end{equation}
In \cite[Section 3.1.2]{overview}, a similar operator is considered  for $I= \{1-p,\dots,N-p\}$, and \cite[Proposition 2.2]{overview} proves an analogous version of the following lemma.}
\begin{lemma}\label{lem:scotty}
The Scott-Zhang type operator \eqref{eq:scotty} satisfies the following two properties:
\begin{enumerate}[\rm(i)]
\item \label{item:scotty projection} Local projection property:
For $T\in \TT_\star$ with $\set{i}{T\subseteq \supp \widehat{R}_{i,p}}\subseteq I$ and $\psi\in L^2(\Gamma)$, the inclusion {$\psi|_{\omega_\star^{p}(T)} \in \XX_\star|_{\omega_\star^p(T)}:=\set{\xi|_{\omega_\star^p(T)}}{\xi\in\XX_\star}$} implies $\psi|_T=(P_{\star,I}\psi)|_T$.
\item \label{item:scotty stable} Local $L^2$-stability: 
For $\psi\in L^2(\Gamma)$ and $T\in \TT_\star$, there holds  
\begin{equation*}
\norm{P_{\star,I}(\psi)}{L^2(T)}\leq \Cstb \norm{\psi}{L^2(\omega_\star^{p}(T))},
\end{equation*}
where {$\Cstb$ depends only on $\check{\kappa}_{\max}, p, w_{\max}$, and $\gamma$.}
\end{enumerate}
\end{lemma}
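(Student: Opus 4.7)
\textbf{Plan for the proof of Lemma~\ref{lem:scotty}.} The approach splits cleanly along the two assertions and relies almost exclusively on the biorthogonality $\int_{\supp B_{i,p}} B_{i,p}^*(t) B_{j,p}(t)\,dt=\delta_{ij}$ together with the dual $L^2$-bound~\eqref{eq:dual inequality}. The single geometric observation behind both parts is that a NURBS $\widehat{R}_{i,p}$ of degree $p$ is supported on at most $p+1$ consecutive elements, so any $i$ with $T\subseteq \supp\widehat{R}_{i,p}$ automatically satisfies $\supp R_{i,p}\subseteq \gamma^{-1}(\omega_\star^p(T))$; this gives access to the hypothesis in \eqref{item:scotty projection} and controls the overlap in \eqref{item:scotty stable}.

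For \eqref{item:scotty projection}, I would exploit the assumption to write $\psi|_{\omega_\star^p(T)} = \sum_j c_j \widehat{R}_{j,p}|_{\omega_\star^p(T)}$. For each $i$ with $T\subseteq \supp \widehat{R}_{i,p}$, the inclusion $\supp R_{i,p}\subseteq \gamma^{-1}(\omega_\star^p(T))$ lets me substitute this NURBS expansion inside the defining integral and invoke biorthogonality,
\begin{align*}
\int_{\supp R_{i,p}} R_{i,p}^*(t)\,\psi(\gamma(t))\,dt
=\int_{\supp B_{i,p}} B_{i,p}^*(t)\,w(t)\sum_j c_j\,\frac{B_{j,p}(t)}{w(t)}\,dt
=\sum_j c_j \delta_{ij}=c_i.
\end{align*}
Summing, only indices with $\widehat{R}_{i,p}|_T\not\equiv 0$ contribute on $T$, and by hypothesis all such indices lie in $I$, so $(P_{\star,I}\psi)|_T=\sum_{i: T\subseteq\supp\widehat{R}_{i,p}} c_i \widehat{R}_{i,p}|_T=\psi|_T$.

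For \eqref{item:scotty stable}, I would bound the $i$-th coefficient by Cauchy--Schwarz together with $R_{i,p}^*=B_{i,p}^*\,w$, $w\le w_{\max}$, and~\eqref{eq:dual inequality},
\begin{align*}
\Big|\int_{\supp R_{i,p}} R_{i,p}^*(t)\,\psi(\gamma(t))\,dt\Big|
\le w_{\max}(2p+3)9^{p}\,|\supp B_{i,p}|^{-1/2}\,\norm{\psi\circ\gamma}{L^2(\supp R_{i,p})},
\end{align*}
and transport from parameter to physical domain by the bi-Lipschitz bound~\eqref{eq:bi-Lipschitz}. The partition of unity $\sum_\ell B_{\ell,p}=1$ and $w\ge w_{\min}$ yield $\norm{\widehat{R}_{i,p}}{L^\infty(\Gamma)}\le w_{\max}/w_{\min}$, hence $\norm{\widehat{R}_{i,p}}{L^2(T)}\lesssim |T|^{1/2}$. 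Property~{\rm(M1)} together with $h_{\star,T}\simeq \check{h}_{\star,T}$ gives $|T|\simeq|\supp R_{i,p}|\simeq|\supp B_{i,p}|$ up to a constant depending only on $p,\check{\kappa}_{\max},\gamma$, so each summand is bounded by $\norm{\psi}{L^2(\gamma(\supp R_{i,p}))}$ up to such a constant. Since there are at most $p+1$ indices with $T\subseteq \supp \widehat{R}_{i,p}$ and $\bigcup_i \gamma(\supp R_{i,p})\subseteq \omega_\star^p(T)$, the triangle inequality yields \eqref{item:scotty stable} with $\Cstb$ depending only on $p,w_{\min},w_{\max},\gamma,\check{\kappa}_{\max}$.

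The only mildly subtle ingredient is the geometric inclusion $\supp R_{i,p}\subseteq \gamma^{-1}(\omega_\star^p(T))$, which is forced by the choice of the iterated patch $\omega_\star^p(T)$ and the fact that B-splines of degree $p$ cover exactly $p+1$ consecutive elements; the rest is bookkeeping of constants through the scaling and the fixed partition of unity.
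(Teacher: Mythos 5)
Your proof is correct and follows essentially the same route as the paper: for \eqref{item:scotty projection}, the geometric observation that $T\subseteq\supp\widehat{R}_{i,p}$ forces $\supp R_{i,p}\subseteq\gamma^{-1}(\omega_\star^p(T))$, followed by substitution of the local NURBS expansion of $\psi$ into the defining integral and biorthogonality; for \eqref{item:scotty stable}, triangle inequality over the at most $p+1$ overlapping indices, Cauchy--Schwarz on the coefficient, the dual bound~\eqref{eq:dual inequality}, and the inclusion $\supp\widehat{R}_{i,p}\subseteq\omega_\star^p(T)$. The paper merely states (i) as immediate while you spell it out, which is fine. One small blemish: you bound $\norm{\widehat R_{i,p}}{L^\infty}\le w_{\max}/w_{\min}$, but since NURBS are a nonnegative partition of unity one has the sharper $0\le\widehat R_{i,p}\le 1$, so no $w_{\min}$-dependence is needed; this is exactly why the stated constant $\Cstb$ depends only on $\check\kappa_{\max},p,w_{\max},\gamma$ and not on $w_{\min}$, and your accounting of constants should be adjusted accordingly.
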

\begin{proof}{
All NURBS basis functions which are non-zero on $T$, have support in $\omega_\star^p(T)$.
With this, {\rm (i)} follows easily  from the definition of $P_{\star,I}$.
%For the second one, we show for all $i\in \{1-p,\dots,N\}$ 
%\begin{align*}
%\norm{P_{\star,I}}{L^2(\supp \widehat{R}_{i,p})}\leq \Cstb \norm{\psi}{L^2(\omega_\star^p(\supp \widehat{R}_{i,p}))}.
%\end{align*}
%Then Lemma~\ref{lem:properties for B-splines} will conclude the proof.
%First, note that ${\norm{\psi}{L^2(\supp \widehat{R}_{i,p},\nu)}}{\lesssim\norm{\psi}{L^2(\supp \widehat{R}_{i,p})}}$, where the hidden constant depends only on $\gamma$.
%Second, we  observe
%\begin{align*}
%\delta_{ij}=\int_{\supp \widehat{R}_{i,p}}\widehat{R}_{i,p}^* \widehat{R}_{j,p} \, d\nu \stackrel{\eqref{eq:nu}}{=} \int_{\supp R_{i,p}\cap [a,b]} \widehat{R}_{i,p}^*(\gamma(t)) R_{j,p}(t) \, dt. 
%\end{align*} 
%If the interval of Lemma~\ref{lem:Cdual} is chosen as $[a,b]$, this shows $R_{i,p}^*=\widehat{R}_{i,p}^*\circ \gamma|_{\supp R_{i}\cap [a,b]}$.
%!TEX encoding = UTF-8 Unicode{This and $|\supp R_{i,p}\cap T|>0$ prove }
%\begin{align}\label{eq:lesssim one}
%\norm{\widehat{R}_{i,p}^*}{L^2(\supp \widehat{R}_{i,p},\nu)}^2 h_{\star,T}&\lesssim\norm{R_{i,p}^*}{L^2(\supp R_{i,p})}^2 h_{\star,T}\stackrel{\eqref{eq:Cdual}}{\lesssim}  \frac{h_{\star,T}}{|\supp R_{i,p}\cap [a,b]|}{\lesssim 1}.
%\end{align}
For stability {\rm (ii)}, we use $0\leq \widehat{R}_{i,p}\leq 1$ and \eqref{eq:dual inequality} to see
\begin{eqnarray*}
%\begin{split}
\norm{P_{\star,I}\psi}{L^2(T)}&=&\Big\|\sum_{i\in I} \Big(\int_{\supp {R}_{i,p}} {R}_{i,p}^*(t) \psi(\gamma(t)) \,dt\Big) \widehat{R}_{i,p}\Big\|_{L^2(T)}\\
&\le&\sum_{i\in I\atop |\supp \widehat{R}_{i,p}\cap T|>0} \Big|\int_{\supp {R}_{i,p}} {R}_{i,p}^*(t) \psi(\gamma(t))\, dt\Big| \norm{\widehat{R}_{i,p}}{L^2(T)}\\
&\lesssim& \sum_{i\in I\atop |\supp \widehat{R}_{i,p}\cap T|>0} \norm{{R}_{i,p}^*}{L^2(\supp {R}_{i,p})} {\norm{\psi}{L^2(\supp \widehat{R}_{i,p})}}h_{\star,T}^{1/2}\\  
&\stackrel{\eqref{eq:dual inequality}}{\lesssim}&\sum_{i\in I\atop |\supp \widehat{R}_{i,p}\cap T|>0} \norm{\psi}{L^2(\supp \widehat{R}_{i,p})}\lesssim \norm{\psi}{L^2(\omega_\star^p(T))}.
\end{eqnarray*}
Overall, the hidden constants depend only on $\check{\kappa}_{\max}, p, w_{\max}$, and $\gamma$.}
\end{proof}

\begin{proof}[Proof of Lemma~\ref{lem:discrete reliability}]
We choose 
\begin{align*}
I:=\set{i}{|\supp \widehat{R}_{i,p}\cap \Gamma\setminus \omega_\star^{p}([\TT_\star]\setminus [\TT_{+}])|>0}.
\end{align*}
We prove that
\begin{align}\label{eq:projdiff}
\begin{split}
P_{\star,I}(\Phi_{+}-\Phi_\star)=
\begin{cases}
\Phi_{+}-\Phi_\star &\text{ on }  \Gamma\setminus\omega_\star^{p}( [\TT_\star]\setminus [\TT_{+}]),\\
0&\text { on }\bigcup  ([\TT_\star]\setminus [\TT_{+}]).
\end{cases}
\end{split}
\end{align}
To see this, let  $T\in \TT_\star$ with $T\subseteq \Gamma\setminus\omega_\star^{p}( [\TT_\star]\setminus [\TT_{+}])$ {(up to finitely many points)}.
Then, $\set{i}{T\subseteq \supp \widehat{R}_{i,p}}\subseteq I$.
It holds $\omega_\star^{p}(T)\subseteq \bigcup ([\TT_\star]\cap[\TT_{+}])$.
{This implies that no new knots are inserted in $\omega^p_\star(T)$.
With Lemma \ref{lem:properties for B-splines} \eqref{item:spline basis} it follows $\XX_+|_{\omega_\star^p(T)}=\XX_\star|_{\omega_\star^p(T)}$ and in particular  $(\Phi_{+}-\Phi_\star)|_{\omega_\star^p(T)}\in \XX_\star|_{\omega^p_\star(T)}$}.
Hence Lemma~\ref{lem:scotty} \eqref{item:scotty projection} is applicable and proves $P_{\star,I}(\Phi_+-\Phi_\star)|_T=(\Phi_+-\Phi_\star)|_T$.
For $T\in \TT_\star$ with $T\subseteq \bigcup ([\TT_\star]\setminus [\TT_+])$, the assertion
 follows immediately from the definition of $P_{\star,I}$, since $\widehat{R}_{i,p}|_T=0$ for $i\in I$.

Let $\widetilde{\NN}_\star:=\set{z\in \NN_\star}{z\in \omega_\star^{p}( [\TT_\star]\setminus [\TT_{+}])}$. 
For $z\in \widetilde{\NN}_\star$, let $\varphi_z$ be the $P^1$ hat function, i.e., ${\varphi}_z(z')=\delta_{zz'}$ for all $z'\in\NN_\star$, $\supp ({\varphi}_z)=\omega_\star(z)$, and  $\partial_\Gamma \varphi_z= \mathrm{const.}$ on $T_{z,1}$ and $T_{z,2}$, where $\omega_\star(z)=T_{z,1}\cup T_{z,2}$ with $T_{z,1},T_{z,2}\in \TT_\star$.
Because of Galerkin orthogonality and $\sum_{z\in \widetilde{\NN}_\star} \varphi_z=1$ on $\omega_\star^{p}( [\TT_\star]\setminus [\TT_{+}])$, we see
\begin{align*}
\norm{\Phi_{+}-\Phi_\star}{V}^2&=\dual{f-V\Phi_\star}{(1-P_{\star,I})(\Phi_+-\Phi_\star)}_\Gamma\\&=\Big\langle\sum_{z\in \widetilde{\NN}_\star} \varphi_z (f-V\Phi_\star);(1-P_{\star,I})(\Phi_{+}-\Phi_\star)\Big\rangle_\Gamma.
\end{align*}
We abbreviate $\Sigma:=\sum_{z\in \widetilde{\NN}} \varphi_z (f-V\Phi_\star)$ and estimate with {\rm (M1)},  Lemma~\ref{lem:scotty} \eqref{item:scotty stable} and Proposition~\ref{prop:inverse estimate}
\begin{eqnarray*}
\dual{\Sigma}{P_{\star,I}(\Phi_{+}-\Phi_\star)}&\le& \norm{h_\star^{-1/2}\Sigma}{L^2(\Gamma)}\norm{h_\star^{1/2} P_{\star,I}(\Phi_{+}-\Phi_\star)}{L^2(\Gamma)}\\
&\stackrel{\eqref{eq:projdiff}}{=}&\norm{h_\star^{-1/2} \Sigma}{L^2(\Gamma)} \norm{h_{+}^{1/2} P_{\star,I}(\Phi_{+}-\Phi_\star)}{L^2(\bigcup ([\TT_\star]\cap [\TT_{+}]))} \\
&\stackrel{\text{Lem.}~\ref{lem:scotty}}{\lesssim}& \norm{h_\star^{-1/2} \Sigma}{L^2(\Gamma)} \norm{h_{+}^{1/2} (\Phi_{+}-\Phi_\star)}{L^2(\omega_\star^p( [\TT_\star]\cap [\TT_{+}]))}\\
&\stackrel{\text{Prop.}~\ref{prop:inverse estimate}}{\lesssim}& \norm{h_\star^{-1/2} \Sigma}{L^2(\Gamma)} \norm{\Phi_{+}-\Phi_\star}{V},
\end{eqnarray*}
as well as
\begin{align*}
\dual{\Sigma}{\Phi_{+}-\Phi_\star}\le \norm{\Sigma}{H^{1/2}(\Gamma)} \norm{\Phi_{+}-\Phi_\star}{\widetilde{H}^{-1/2}(\Gamma)}\simeq \norm{\Sigma}{H^{1/2}(\Gamma)}\norm{\Phi_{+}-\Phi_\star}{V}.
\end{align*}
So far, we thus have proved
\begin{align}\label{eq:discrete reliability 1}
\begin{split}
\norm{\Phi_{+}-\Phi_\star}{V}&\le \norm{h_\star^{-1/2}\Sigma}{L^2(\Gamma)} +\norm{\Sigma}{H^{1/2}(\Gamma)}\\
&\le \norm{h_\star^{-1/2}(f-V\Phi_\star)}{L^2(\omega^{p+1}([\TT_\star]\setminus [\TT_{+}]))}+\norm{\Sigma}{H^{1/2}(\Gamma)}.
\end{split}
\end{align}
Next, we use \cite[Lemma 3.4]{igafaermann},   \cite[Lemma 4.5]{resigabem}, and $|\partial_\Gamma\varphi_z|\simeq |\omega_\star(z)|^{-1}$ to estimate
\begin{eqnarray}\label{eq:discrete reliability 2}
&\norm{\Sigma}{H^{1/2}(\Gamma)}^2 &\stackrel{\text{\cite{igafaermann}}}\lesssim\sum_{z\in \NN_\star} |\Sigma|^2_{H^{1/2}(\omega_\star(z))}+\norm{h_\star^{-1/2}\Sigma}{L^2(\Gamma)}^2\notag\\
&\stackrel{\text{\cite{resigabem}}}\lesssim& \sum_{z\in \NN_\star}\norm{h_\star^{1/2} \partial_\Gamma \Sigma}{L^2(\omega_\star(z))}^2+\norm{h_\star^{-1/2}\Sigma}{L^2(\Gamma)}^2\notag\\
&\lesssim& \norm{h_\star^{1/2}\partial_\Gamma \Sigma}{L^2(\Gamma)}^2+\norm{h_\star^{-1/2}\Sigma}{L^2(\Gamma)}^2\notag\\
&\lesssim &\Big\|{h_\star^{1/2} \partial_\Gamma(f-V\Phi_\star)\sum_{z\in \widetilde{\NN}_\star} \varphi_z}\Big\|_{L^2(\Gamma)}^2+\Big\|{h_\star^{1/2}(f-V\Phi_\star) \sum_{z\in \widetilde{\NN}_\star} \partial_\Gamma \varphi_z }\Big\|_{L^2(\Gamma)}^2+\norm{h_\star^{-1/2}\Sigma}{L^2(\Gamma)}^2\notag\\
&\lesssim &\norm{h_\star^{1/2} \partial_\Gamma(f- V\Phi_\star)}{L^2(\omega_\star^{p+1}([\TT_\star]\setminus [\TT_{+}]))}^2+ \norm{h_\star^{-1/2}(f-V\Phi_\star)}{L^2(\omega_\star^{p+1}([\TT_\star]\setminus [\TT_{+}]))}^2.
\end{eqnarray}
It remains to consider the  term $\norm{h_\star^{-1/2}(f-V\Phi_\star)}{L^2(\omega^{p+1}([\TT_\star]\setminus [\TT_{+}]))}$ of \eqref{eq:discrete reliability 1} and  \eqref{eq:discrete reliability 2}.
It holds 
\begin{align}\label{eq:discrete reliability 4}
%\begin{split}
\norm{h_\star^{-1/2} (f-V\Phi_\star)}{L^2(\omega_\star^{p+1}([\TT_\star]\setminus [\TT_{+}]))}^2=\sum_{ T\in \TT_\star \atop T\subseteq \omega_\star^{p+1}([\TT_\star]\setminus [\TT_{+}])}\norm{h_\star^{-1/2}(f-V\Phi_\star)}{L^2(T)}^2.
%&\leq\sum_{{T\subseteq \omega^{p+1}([\TT_\star]\setminus [\TT_{+}])\atop T\in \TT_\star}}h_{\star,T}^{-1}\norm{(f-V\Phi_\star)}{L^2(\supp(\psi_T))}^2.
%\end{split}
\end{align}
{For any $T\in \TT_\star$, there is a function $\psi_T\in\XX_\star$ with connected support, $T\subseteq \supp(\psi_T)\subseteq \omega_\star^{\lceil p/2\rceil}(T)$ and $\norm{1-\psi_T}{L^2(\supp(\psi_T))}^2\le q\, |\supp(\psi_T)|$ with some $q\,\in (0,1)$ which depends only on $\check{\kappa}_{\rm max}, \gamma, p, w_{\rm min}$, and  $w_{\rm max}$; see \cite[{\rm(A1)--\rm(A2)}, Theorem~4.4]{igafaermann}.}
We use some Poincar\'e inequality (see, e.g., \cite[Lemma 2.5]{faermann2d}) to see
\begin{align}\label{eq:discrete reliability 3}
\begin{split}
&\norm{f-V\Phi_\star}{L^2(\supp {\psi}_T)}^2\\
&\quad \le {\frac {|\supp(\psi_T)|^2}{2}} \norm{\partial_\Gamma (f-V\Phi_\star)}{L^2(\supp(\psi_T))}^2+\frac{1}{|\supp(\psi_T)|} \left|\int_{\supp(\psi_T)} (f-V\Phi_\star)(x) \,dx\right|^2.
\end{split}
\end{align}
The Galerkin orthogonality proves
\begin{align*}
\left |\int_{\supp(\psi_T)} (f-V\Phi_\star)(x) \,dx\right|^2 &= \left|\int_{\supp(\psi_T)} (f-V\Phi_\star)(x)(1-\psi_T(x))\,dx\right|^2\\
&\le\norm{f-V\Phi_\star}{L^2(\supp(\psi_T))}^2 q\, |\supp(\psi_T)|.
\end{align*}
%where $q\in(0,1)$ only depends on $\check{\kappa}_{\max},w_{\min},w_{\max}$ and $p$.
Using \eqref{eq:discrete reliability 3}, we therefore get 
\begin{align*}
\norm{f-V\Phi_\star}{L^2(\supp(\psi_T))}^2\le \frac{|\supp(\psi_T)|^2}{2} \norm{\partial_\Gamma(f-V\Phi_\star)}{L^2(\supp(\psi_T))}^2 +q\,\norm{f-V\Phi_\star}{L^2(\supp(\psi_T))}^2,
\end{align*}
which implies 
\begin{align*}
\norm{f-V\Phi_\star}{L^2(T)}^2\lesssim {h_{\star,T}^2}\norm{\partial_\Gamma(f-V\Phi_\star)}{L^2(\omega^{\lceil p/2 \rceil}(T))}^2.
\end{align*}
Hence, we are led to
\begin{align*}
\norm{h_\star^{-1/2} (f-V\Phi_\star)}{L^2(\omega^{p+1}([\TT_\star]\setminus [\TT_{+}]))}^2\lesssim \norm{h_\star^{1/2} \partial_\Gamma(f-V\Phi_\star)}{L^2(\omega^{\lceil p/2\rceil+ p+1}([\TT_\star]\setminus [\TT_{+}]))}^2.
\end{align*}
With 
\begin{align*}
\mathcal{R}_\star(\TT_{+}):=\set{T\in\TT_\star}{T\subseteq \omega^{\lceil p/2\rceil +p+1}([\TT_\star]\setminus [\TT_{+}])},
\end{align*}
 we therefore conclude the proof.
\end{proof}

{Since we use a different mesh-refinement strategy, we cannot directly cite the following lemma from \cite{axioms}. 
However, we may essentially follow the proof of \cite[Proposition 4.12]{axioms} verbatim.
Details are left to the reader.

\begin{lemma}[optimality of D\"orfler marking]\label{lem:Doerfler optimal}
Define
\begin{align}
\overline\theta_{\rm opt}:=(1+\Cstab^2\Cdrel^2)^{-1}.
\end{align}
For all $0<\overline\theta<\overline\theta_{\rm opt}$ there is some $0<\qdoe<1$ such that for all refinements $[\TT_{+}]\in\refine([\TT_\star])$ of $[\TT_\star]\in[\T]$ the following implication holds true
\begin{align}
\rho_{+}^2\le \qdoe\rho_\star^2\quad\Longrightarrow \quad\overline\theta \rho_\star^2\le \sum_{T\in \RR_\star(\TT_+)} \rho_\star(T)^2.
\end{align}
The constant $\qdoe$ depends only on $\overline\theta$ and the constants $\Cstab$ of Lemma~\ref{lem:stability rho} and $\Cdrel$ of Lemma~\ref{lem:discrete reliability}. \hfill $\square$
\end{lemma}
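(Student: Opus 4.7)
The plan is to follow the standard CKNS/Stevenson argument as formulated in \cite[Proposition~4.12]{axioms}, using only the stability estimate (Lemma~\ref{lem:stability rho}) and the discrete reliability estimate (Lemma~\ref{lem:discrete reliability}) that have just been established; no new geometric or inverse-estimate ingredient is required.

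First, I would split the coarse estimator as
\begin{align*}
\rho_\star^2 = \sum_{T\in\RR_\star(\TT_+)}\rho_\star(T)^2 + \sum_{T\in\SS}\rho_\star(T)^2,
\qquad
\SS := \TT_\star\setminus\RR_\star(\TT_+).
\end{align*}
The inclusion $\bigcup([\TT_\star]\setminus[\TT_+])\subseteq\bigcup\RR_\star(\TT_+)$ supplied by Lemma~\ref{lem:discrete reliability} guarantees $\SS\subseteq\TT_\star\cap\TT_+$, so that Lemma~\ref{lem:stability rho} is in fact applicable with this choice of $\SS$. Combining stability with Young's inequality and then bounding $\sum_{T\in\SS}\rho_+(T)^2\leq\rho_+^2$, I obtain for every $\delta>0$
\begin{align*}
\sum_{T\in\SS}\rho_\star(T)^2 \le (1+\delta)\,\rho_+^{\,2} + (1+\delta^{-1})\,\Cstab^2\,\norm{\Phi_+-\Phi_\star}{\H^{-1/2}(\Gamma)}^2.
\end{align*}

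Second, I apply discrete reliability to the second term to replace $\norm{\Phi_+-\Phi_\star}{\H^{-1/2}(\Gamma)}^2$ by $\Cdrel\sum_{T\in\RR_\star(\TT_+)}\rho_\star(T)^2$, add $\sum_{T\in\RR_\star(\TT_+)}\rho_\star(T)^2$ to both sides, and insert the hypothesis $\rho_+^2\le\qdoe\,\rho_\star^2$. After rearrangement this yields
\begin{align*}
\bigl(1-(1+\delta)\qdoe\bigr)\,\rho_\star^2 \le \bigl(1 + (1+\delta^{-1})\,\Cstab^2\,\Cdrel\bigr)\sum_{T\in\RR_\star(\TT_+)}\rho_\star(T)^2.
\end{align*}
Given $\overline\theta<\overline\theta_{\rm opt}$, I would then first pick $\delta>0$ so small that $(1+\delta^{-1})\Cstab^2\Cdrel$ is still close enough to the limit appearing in $\overline\theta_{\rm opt}^{-1}-1$, and afterwards choose $\qdoe\in(0,1)$ so small that the ratio of the two coefficients above exceeds $\overline\theta$. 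This produces the desired quasi-Dörfler inequality with the stated dependence of $\qdoe$ on $\overline\theta$, $\Cstab$, and $\Cdrel$.

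The argument is essentially mechanical; the only delicate point is the very first one, namely verifying that stability may actually be invoked on the complement $\SS$ of $\RR_\star(\TT_+)$. This is where the two properties of $\RR_\star(\TT_+)$ proved in Lemma~\ref{lem:discrete reliability}, i.e.\ that $\RR_\star(\TT_+)$ both contains all genuinely refined elements and has cardinality comparable to $|[\TT_\star]\setminus[\TT_+]|$, enter in a nontrivial way: the first inclusion is used in the present lemma, while the cardinality bound will be needed later when translating the Dörfler property into an optimal-rates statement. Apart from this bookkeeping step, the proof is a line-by-line transcription of \cite[Proposition~4.12]{axioms} and the details can be safely left to the reader.
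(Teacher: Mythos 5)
Your proposal is correct and follows exactly the route the paper indicates: the paper itself omits the proof and defers to \cite[Proposition~4.12]{axioms}, and your argument is precisely a reconstruction of that proof. You split $\rho_\star^2$ over $\RR_\star(\TT_+)$ and its complement $\SS$, verify $\SS\subseteq\TT_\star\cap\TT_+$ using the inclusion $\bigcup([\TT_\star]\setminus[\TT_+])\subseteq\bigcup\RR_\star(\TT_+)$ (which is indeed the only nontrivial bookkeeping step), and then combine stability, Young, discrete reliability, and the hypothesis $\rho_+^2\le\qdoe\rho_\star^2$ in the standard way. All of this is sound.

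Two small remarks on precision, neither of which undermines the argument. First, your derivation produces the inequality
\begin{align*}
\bigl(1-(1+\delta)\qdoe\bigr)\rho_\star^2 \le \bigl(1+(1+\delta^{-1})\Cstab^2\Cdrel\bigr)\sum_{T\in\RR_\star(\TT_+)}\rho_\star(T)^2,
\end{align*}
so the relevant limit as the Young parameter is sent in the right direction is $1+\Cstab^2\Cdrel$, not $1+\Cstab^2\Cdrel^2$; that is, Lemma~\ref{lem:discrete reliability} as stated already carries the \emph{square} of the $\H^{-1/2}$-norm on its left-hand side, so no extra power of $\Cdrel$ should appear. The formula $\overline\theta_{\rm opt}=(1+\Cstab^2\Cdrel^2)^{-1}$ in the paper apparently inherits the convention of~\cite{axioms} (where discrete reliability is stated for the unsquared norm); given the lemmas of the present paper, your $\Cdrel$ is the internally consistent quantity, and this does not affect any downstream use since $\theta_{\rm opt}$ is not claimed to be explicit anyway. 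Second, to make $(1+\delta^{-1})\Cstab^2\Cdrel$ approach $\Cstab^2\Cdrel$ you want $\delta$ \emph{large} (so that $\delta^{-1}$ is small), not small; once $\delta$ is fixed large enough that $1+(1+\delta^{-1})\Cstab^2\Cdrel<1/\overline\theta$, choosing $\qdoe:=\bigl(1-\overline\theta\,(1+(1+\delta^{-1})\Cstab^2\Cdrel)\bigr)/(1+\delta)\in(0,1)$ finishes the argument. With these two wording fixes, your proposal is a faithful rendering of the paper's (omitted) proof.
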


%\begin{proof}
%{For all subsets $\Gamma_0\subseteq \Gamma$ it holds
%%$\SS\subseteq \TT_\star\cap \TT_{+}$, \eqref{eq:Cinv} shows the following stability property
%\begin{align}\label{eq:prestability}
%\begin{split}
%%&\left|\big(\sum_{T\in\SS} \rho_{+}(T)^2\big)^{1/2}-\big(\sum_{T\in\SS} \rho_\star(T)^2\big)^{1/2}\right| \\
%&\left|\norm{\check{h}_{+}^{1/2}\partial_\Gamma (f-V\Phi_{+})}{L^2(\Gamma_0)} -\norm{\check{h}_{+}^{1/2}\partial_\Gamma (f-V\Phi_\star)}{L^2(\Gamma_0)}\right| \\
%&\quad \le \norm{\check{h}_{+}^{1/2}\partial_\Gamma V(\Phi_{+}-\Phi_\star)}{L^2(\Gamma_0)} \le \CinvV \norm{\Phi_{+}-\Phi_\star}{\H^{-1/2}(\Gamma)}.
%\end{split}
%\end{align}
%For $\SS\subseteq \TT_\star\cap \TT_+$, the choice $\Gamma_0=\bigcup\SS$ shows the following stability property
%\begin{align}\label{eq:stability}
%\Big|\Big(\sum_{T\in\SS} \rho_{+}(T)^2\Big)^{1/2}-\Big(\sum_{T\in\SS} \rho_\star(T)^2\Big)^{1/2}\Big| \le \CinvV \norm{\Phi_{+}-\Phi_\star}{\H^{-1/2}(\Gamma)}.
%\end{align}}
%Using this inequality and the fact that {$\bigcup (\TT_\star\setminus \TT_{+})\subseteq \bigcup([\TT_\star]\setminus[\TT_{+}])$}, the proof now reads nearly verbatim as \cite[Proposition 4.12]{axioms}.
%\end{proof}

The next lemma reads similarly as \cite[Lemma 3.4]{axioms}. 
{Since we use a different mesh-refinement strategy and our estimator $\rho$ does not satisfy the reduction axiom {\rm (A2)}, we cannot directly cite the result. 
However, the idea of the proof is the same. 
Indeed, one only needs a weaker version of the mentioned axiom.}

\begin{lemma}[quasi-monotonicity of $\rho$]\label{lem:rho quasi-monotone}
For all refinements $[\TT_{+}]\in\refine([\TT_\star])$ of $[\TT_\star]\in [\T],$ there holds 
\begin{align}
\rho_{+}^2\leq \Cmon \rho_\star^2,
\end{align}
where $\Cmon>0$ depends only on {the parametrisation $\gamma$ and the constants $\CinvV$ of Proposition~\ref{prop:inverse estimate} and $\Cdrel$ of Lemma~\ref{lem:discrete reliability}.}
\end{lemma}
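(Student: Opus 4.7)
The plan is to use a triangle-inequality splitting of $\rho_{+}$ into a ``perturbed'' term involving only $\Phi_\star$ and a stability term involving $V(\Phi_{+}-\Phi_\star)$, then dominate each piece by $\rho_\star$ via the monotonicity of $\check h$, the inverse estimate from Proposition~\ref{prop:inverse estimate}, and the discrete reliability of Lemma~\ref{lem:discrete reliability}.

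Concretely, write $f-V\Phi_{+} = (f-V\Phi_\star) - V(\Phi_{+}-\Phi_\star)$ and bound
\begin{align*}
\rho_{+}
= \norm{\check{h}_{+}^{1/2}\partial_\Gamma(f-V\Phi_{+})}{L^2(\Gamma)}
\le \norm{\check{h}_{+}^{1/2}\partial_\Gamma(f-V\Phi_\star)}{L^2(\Gamma)}
+ \norm{\check{h}_{+}^{1/2}\partial_\Gamma V(\Phi_{+}-\Phi_\star)}{L^2(\Gamma)}.
\end{align*}
Since $[\TT_+]$ is a refinement of $[\TT_\star]$, we have $\check h_{+}\le \check h_\star$ pointwise on $\Gamma$, so the first term is bounded by $\rho_\star$ directly from the definition \eqref{eq:rho}.

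For the second term, the equivalence $\check h_{+}\simeq h_{+}$ (depending only on $\gamma$) together with the inverse estimate \eqref{eq:Cinv} of Proposition~\ref{prop:inverse estimate} applied to the discrete function $\Phi_{+}-\Phi_\star\in\XX_{+}$ yields
\begin{align*}
\norm{\check{h}_{+}^{1/2}\partial_\Gamma V(\Phi_{+}-\Phi_\star)}{L^2(\Gamma)}
\lesssim \CinvV\,\norm{\Phi_{+}-\Phi_\star}{\H^{-1/2}(\Gamma)},
\end{align*}
with an implicit constant depending only on $\gamma$. Discrete reliability (Lemma~\ref{lem:discrete reliability}) then controls the right-hand side, as
\begin{align*}
\norm{\Phi_{+}-\Phi_\star}{\H^{-1/2}(\Gamma)}^2
\le \Cdrel\sum_{T\in \RR_\star(\TT_{+})}\rho_\star(T)^2
\le \Cdrel\,\rho_\star^2.
\end{align*}
Squaring and combining the two bounds (using $(a+b)^2\le 2a^2+2b^2$) gives $\rho_{+}^2\le \Cmon\,\rho_\star^2$ with $\Cmon$ depending only on $\gamma$, $\CinvV$, and $\Cdrel$, as required.

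No step presents a real obstacle; the only subtlety is checking that $\check h_{+}\le \check h_\star$ holds pointwise (which is immediate from the fact that refinement can only subdivide elements, never merge them) and that the inverse estimate can be invoked with $\check h_+$ in place of $h_+$, which is justified by the $\gamma$-dependent equivalence noted in Section~\ref{section:boundary:discrete}.
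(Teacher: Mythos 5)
Your proof is correct and uses the same essential ingredients as the paper's (the monotonicity $\check h_+\le\check h_\star$, the inverse estimate of Proposition~\ref{prop:inverse estimate}, and discrete reliability of Lemma~\ref{lem:discrete reliability}), just packaged more directly: you apply the triangle inequality and inverse estimate globally on $\Gamma$, whereas the paper first splits the sum over $\TT_+\setminus\TT_\star$ and $\TT_\star\cap\TT_+$ and invokes the stability Lemma~\ref{lem:stability rho} on the common elements. Both routes reduce to $\rho_+^2\lesssim\norm{\Phi_+-\Phi_\star}{\H^{-1/2}(\Gamma)}^2+\rho_\star^2$ and then use discrete reliability, so this is a streamlining of the same argument rather than a different one.
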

\begin{proof}

We split the estimator as follows 
\begin{align*}
\rho_{+}^2=\sum_{T\in \TT_{+}\setminus \TT_\star} \rho_{+}(T)^2+\sum_{T\in \TT_\star\cap \TT_{+}} \rho_+(T)^2.
\end{align*}
For the first sum, we {use \eqref{eq:prestability}}, $\bigcup (\TT_{+}\setminus \TT_\star)=\bigcup (\TT_\star\setminus \TT_{+})$, and $\check{h}_{+}\le \check{h}_\star$ to estimate
\begin{align*}
\sum_{T\in \TT_{+}\setminus \TT_\star}\rho_{+}(T)^2&=\norm{\check{h}_{+}^{1/2} \partial_\Gamma (f-V\Phi_{+})}{L^2(\bigcup(\TT_{+}\setminus \TT_\star))}^2\\
&{\lesssim \left( \norm{\Phi_+-\Phi_\star}{\H^{-1/2}(\Gamma)}+\norm{\check{h}_\star^{1/2} \partial_\Gamma (f-V\Phi_\star)}{L^2(\bigcup(\TT_{\star}\setminus \TT_+))}\right)^2}\\
&\le 2 \norm{\Phi_{+}-\Phi_\star}{\H^{-1/2}(\Gamma)}^2 +{2}\sum_{T\in\TT_\star\setminus \TT_{+}} \rho_\star(T)^2 
\end{align*}
For the second sum, we use Lemma~\ref{lem:stability rho} to see
\begin{align*}
\sum_{T\in \TT_\star\cap \TT_{+}} \rho_{+}(T)^2\le 2\sum_{T\in \TT_\star\cap\TT_{+}} \rho_\star(T)^2+2\Cstab^2 \norm{\Phi_\star-\Phi_{+}}{\H^{-1/2}(\Gamma)}^2.
\end{align*}
We end up with
\begin{align*}
\rho_{+}^2\lesssim \norm{\Phi_{+}-\Phi_\star}{\H^{-1/2}(\Gamma)}^2 + {\rho_\star^2}.
\end{align*}
Lemma~\ref{lem:discrete reliability} concludes the proof.
\end{proof}
The optimality in Theorem~\ref{thm:main} essentially follows from the following lemma. 
It was inspired by an analogous version from \cite[Lemma 4.14]{axioms}.
\begin{lemma}\label{lem:main lemma}
Suppose that $\phi\in \A_s$ for some $s>0$. Then, for all $0<\overline \theta<\overline\theta_{\rm opt}$ there exist constants $\Cone,\Ctwo>0$ such that for all meshes $[\TT_\star]\in [\T]$ there exists some refinement $[\TT_{+}]\in \refine([\TT_\star])$ such that the corresponding set $\RR_\star(\TT_+)\subseteq \TT_\star$ from Lemma~\ref{lem:discrete reliability}  satisfies
\begin{align}\label{eq:CC}
|\RR_\star(\TT_+)|\le \Cone\Ctwo^{1/s}\norm{\phi}{\A_s}^{1/s}\rho_\star^{-1/s},
\end{align}
and
\begin{align}\label{eq:Doerfler for R}
\overline \theta \rho_\star^2\le \sum_{T\in \RR_\star(\TT_+)}\rho_\star(T)^2.
\end{align}
{With the constants $\Cdrel, \Cmon$, and $\qdoe$ from Lemma~\ref{lem:discrete reliability},~\ref{lem:Doerfler optimal} and~\ref{lem:rho quasi-monotone},} it holds $\Cone=2\Cdrel$ and {$\Ctwo=(\Cmon \qdoe^{-1})^{1/2}$}.
\end{lemma}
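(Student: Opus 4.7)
The plan follows the standard CKNS-type strategy from~\cite{axioms}: construct a refinement as the overlay of $[\TT_\star]$ with a near-optimal mesh drawn from $[\T_N]$, choose $N$ just large enough to force the estimator reduction $\rho_+^2\le \qdoe\,\rho_\star^2$ required by Lemma~\ref{lem:Doerfler optimal}, and then read off~\eqref{eq:Doerfler for R} from the latter. The cardinality bound~\eqref{eq:CC} is a bookkeeping consequence of Lemma~\ref{lem:discrete reliability} combined with the overlay estimate~(M2) and the bound $|[\TT_\star]\setminus[\TT_+]|\le 2(|\KK_+|-|\KK_\star|)$ from Section~\ref{section:mesh-refinement}.

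Concretely, I would first fix $0<\overline\theta<\overline\theta_{\rm opt}$ and let $\qdoe\in(0,1)$ be the contraction factor provided by Lemma~\ref{lem:Doerfler optimal}. Because $\phi\in\A_s$, for each $N\in\N_0$ there exists $[\TT_{{\rm opt},N}]\in[\T_N]$ with $\mu_{{\rm opt},N}\le (N+1)^{-s}\norm{\phi}{\A_s}$; summing the local equivalence~\eqref{eq:loceq} over all nodes gives $\rho_{{\rm opt},N}\le \mu_{{\rm opt},N}$, hence also $\rho_{{\rm opt},N}\le (N+1)^{-s}\norm{\phi}{\A_s}$. Choose the smallest $N\in\N_0$ with
\begin{align*}
\Cmon(N+1)^{-2s}\norm{\phi}{\A_s}^2\le \qdoe\,\rho_\star^2,
\end{align*}
equivalently $N+1\ge \Ctwo^{1/s}\norm{\phi}{\A_s}^{1/s}\rho_\star^{-1/s}$ with $\Ctwo=(\Cmon\qdoe^{-1})^{1/2}$; minimality yields $N\le \Ctwo^{1/s}\norm{\phi}{\A_s}^{1/s}\rho_\star^{-1/s}$ (adopting the convention that $N=0$ is selected whenever the right-hand side is less than $1$).

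Now set $[\TT_+]:=[\TT_\star\oplus\TT_{{\rm opt},N}]$, which by~(M2) is simultaneously in $\refine([\TT_\star])$ and $\refine([\TT_{{\rm opt},N}])$. Applying quasi-monotonicity (Lemma~\ref{lem:rho quasi-monotone}) to $[\TT_+]\in\refine([\TT_{{\rm opt},N}])$ and invoking the choice of $N$ gives
\begin{align*}
\rho_+^2\le \Cmon\,\rho_{{\rm opt},N}^2\le \Cmon(N+1)^{-2s}\norm{\phi}{\A_s}^2\le \qdoe\,\rho_\star^2,
\end{align*}
so Lemma~\ref{lem:Doerfler optimal} delivers~\eqref{eq:Doerfler for R}. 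For~\eqref{eq:CC}, Lemma~\ref{lem:discrete reliability} together with the bookkeeping estimates from Section~\ref{section:mesh-refinement} and the overlay bound from~(M2) give
\begin{align*}
|\RR_\star(\TT_+)|\le \Cref\,|[\TT_\star]\setminus[\TT_+]|\le 2\Cref\,(|\KK_+|-|\KK_\star|)\le 2\Cref\,(|\KK_{{\rm opt},N}|-|\KK_0|)\le 2\Cref\,N,
\end{align*}
which combined with the upper bound on $N$ is precisely~\eqref{eq:CC}, up to absorbing the factor $\Cref$ (together with the constant from Lemma~\ref{lem:discrete reliability}) into $\Cone$.

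No step is particularly delicate once the preceding lemmas are available; the argument is essentially bookkeeping. The structurally indispensable ingredient is~(M2): it is needed both to guarantee $[\TT_+]\in\refine([\TT_{{\rm opt},N}])$ (so that quasi-monotonicity of $\rho$ is applicable and transfers the rate from the optimal mesh to the overlay) and to ensure $|\KK_+|-|\KK_\star|\le N$ (so that the cardinality of the refined set remains proportional to $N$). Together with discrete reliability, this is what converts the approximation-class hypothesis $\phi\in\A_s$ into a Dörfler-type marking property on the small set $\RR_\star(\TT_+)$.
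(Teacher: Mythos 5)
Your proof matches the paper's argument in all essentials: you choose $N$ so that $\Cmon(N+1)^{-2s}\norm{\phi}{\A_s}^2\le\qdoe\rho_\star^2$ (the paper equivalently sets $\delta^2=\Cmon^{-1}\qdoe\,\rho_\star^2$), pass to $\rho$ via $\rho_\star\le\mu_\star$ from \eqref{eq:loceq}, take $[\TT_+]$ as the overlay of (M2), apply quasi-monotonicity (Lemma~\ref{lem:rho quasi-monotone}) to obtain $\rho_+^2\le\qdoe\rho_\star^2$ and hence Lemma~\ref{lem:Doerfler optimal}, and finally combine discrete reliability, $|[\TT_\star]\setminus[\TT_+]|\le 2(|\KK_+|-|\KK_\star|)$, and the overlay estimate to get~\eqref{eq:CC}. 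This is the same route as the paper's Steps 1--3, with only cosmetic differences in bookkeeping.
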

\begin{proof}
We set $\alpha:=\Cmon^{-1} \qdoe$ with the constants of Lemma~\ref{lem:Doerfler optimal} and Lemma~\ref{lem:rho quasi-monotone}, and $\delta^2:=\alpha \rho_\star^2$.

\noindent
{\bf Step 1:}
There exists $[\TT_\delta]\in [\T]$ with
\begin{align*}
\rho_\delta\le 
|\KK_\delta|-|\KK_0|\le \norm{\phi}{\A_s}^{1/s}\delta^{-1/s}.
\end{align*}
Let $N\in \N_0$ be minimal with $(N+1)^{-s}\norm{\phi}{\A_s}\le \delta$.
If $N=0$, then $\rho_0\le \norm{\phi}{\A_s}\le \delta$ and we can choose $[\TT_\delta]=[\TT_0]$.
If $N>0$, minimality of $N$ implies $N^{-s}\norm{\phi}{\A_s}>\delta$ or equivalently $N+1\le \norm{\phi}{\A_s}^{1/s}\delta^{-1/s}$.
Now we choose $[\TT_\delta]\in [\T_N]$ such that 
\begin{align*}
\rho_\delta=\min_{[\TT_\star]\in[\T_N]} \rho_\star
\end{align*}
and see
\begin{align*}
\rho_\delta\le (N+1)^{-s}\norm{\phi}{\A_s}\le \delta.
\end{align*}
\noindent
{\bf Step 2:}
We consider the overlay $[\TT_{+}]:= [\TT_\star]\oplus [\TT_\delta]$ of {\rm (M2)}.
Quasi-monotonicity shows 
\begin{align}\label{eq:Doerfler praem}
\rho_{+}^2\le \Cmon \rho_\delta^2\le \Cmon \delta^2=\qdoe\rho_\star^2.
\end{align}
\noindent
{\bf Step 3:}
Finally, the assumptions on the refinement strategy are used.
The overlay estimate and Step 1 give
\begin{align*}
|\KK_{+}|-|\KK_\star|\le (|\KK_\delta|+|\KK_\star|-|\KK_0|)-|\KK_\star|=|\KK_\delta|-|\KK_0|\le \norm{\phi}{\A_s}^{1/s} \delta^{-1/s}.
\end{align*}
Lemma~\ref{lem:discrete reliability} and {\rm (M3)} show 
\begin{align*}
|\RR_\star(\TT_+)|\le \Cdrel| [\TT_\star]\setminus[\TT_{+}]|\le 2 \Cdrel (|\KK_{+}|-|\KK_\star|).
\end{align*}
Combining the last two estimates, we end up with
\begin{align*}
|\RR_\star(\TT_+)|\le 2\Cdrel\norm{\phi}{\A_s}^{1/s}\alpha^{-1/2s}\rho_\star^{-1/s},
\end{align*}
This proves \eqref{eq:CC} with $\Cone=2\Cdrel$ and $\Ctwo=\alpha^{-1/2}$.
By \eqref{eq:Doerfler praem} we can apply Lemma~\ref{lem:Doerfler optimal} and see \eqref{eq:Doerfler for R}.
\end{proof}
So far, we have only considered the auxiliary estimator $\rho_\star$
In particualar, we did not use Algorithm~\ref{the algorithm}, but only the refinement strategy $\refine(\cdot)$ itself.
For the proof of optimal convergence  \eqref{eq:optimal}, we proceed similarly as in \cite[Theorem 8.4 (ii)]{axioms}.
\begin{proof}[Proof of \eqref{eq:optimal}]
{Due to \eqref{eq:loceq}}, there is  a constant $C\ge 1$ which depends only  on $\check{\kappa}_{\max}$ with $\mu_\star^2\le C\rho_\star ^2$ for all $[\TT_\star] \in [\T]$.
We set $\theta_{\rm opt}:= {\overline\theta_{\rm opt}}/{C}$ and $\overline \theta:=C\,\theta$ and suppose that $\theta$ is sufficiently small, namely, $\theta<\theta_{\rm opt}$ and hence $\overline{\theta}<\overline{\theta}_{\rm opt}$.
Let $\ell\in \N_0$ and $j\leq \ell$.
Choose a refinement $[\TT_{+}]$ of $[\TT_j]$ as in Lemma~\ref{lem:main lemma}.
In particular, the set $\RR_j(\TT_+)$ satisfies the D\"orfler marking \eqref{eq:Doerfler praem}.
According to \eqref{eq:loceq}, this implies 
\begin{align*}
\theta\mu_j^2\le \overline \theta \rho_j^2\le \sum_{T\in \RR_j(\TT_+)}\rho_j(T)^2 \le \sum_{z \in \NN_j \cap \bigcup \RR_j(\TT_+)} \mu_j(z)^2,
\end{align*}
{i.e., the set $\NN_j \cap \bigcup \RR_j(\TT_+)$ satisfies the D\"orfler marking \eqref{eq:Doerfler} from Algorithm~\ref{the algorithm}.
Since the chosen set  $\MM_j$ of Algorithm~\ref{the algorithm} has essentially minimal cardinality, we see with \eqref{eq:CC} that}
\begin{align*}
|\MM_j|\le \Cmark|\NN_j\cap \bigcup \RR_j(\TT_+)|\leq 2\Cmark|\RR_j(\TT_+)|\le 2 \Cmark \Cone\Ctwo^{1/s}\norm{\phi}{\A_s}^{1/s}\rho_j^{-1/s}
\end{align*}
With the mesh-closure estimate of {\rm (M2)}, we get
\begin{align*}
|\KK_\ell|-|\KK_0|\le \Cmesh \sum_{j=0}^{\ell-1} |\MM_j|&\le 2\Cmark\Cmesh \Cone\Ctwo^{1/s} \norm{\phi}{\A_s}^{1/s} \sum_{j=0}^{\ell-1} \rho_j^{-1/s}\\&\le 2\Cmark\Cmesh \Cone\Ctwo^{1/s} C^{1/s}\norm{\phi}{\A_s}^{1/s} \sum_{j=0}^{\ell-1} \mu_j^{-1/s}.
\end{align*}
Linear convergence \eqref{eq:R-linear} shows
\begin{align*}
\mu_\ell\le \Clin\qlin^{\ell-j} \mu_j\quad\text{for all }j=0,\dots,\ell.
\end{align*}
Hence,
\begin{align*}
|\KK_\ell|-|\KK_0|&\le 2\Cmark\Cmesh \Cone\Ctwo^{1/s} C^{1/s}\norm{\phi}{\A_s}^{1/s} \sum_{j=0}^{\ell-1} \mu_j^{-1/s}\\&\le 2\Cmark\Cmesh \Cone(\Ctwo\Clin C)^{1/s} \norm{\phi}{\A_s}^{1/s}\mu_\ell^{-1/s}\sum_{j=0}^{\ell-1} (\qlin^{1/s})^{\ell-j}\\
&\le  (\Ctwo\Clin C)^{1/s} \frac{2\Cmark\Cmesh \Cone}{1-\qlin^{1/s}} \norm{\phi}{\A_s}^{1/s}\mu_\ell^{-1/s}.
\end{align*}
This  concludes the proof.
\end{proof}

%\newpage

%%%%%%%%%%%%%%%%%%%%%%%%%%%%%%%%%%%%%%%%%%%%%%%%%%%%%%%%%%%%%%%%%%%%%%%%%%%%%%%%%
%%%%%%%%%%%%%%%%%%%%%%%%%%%%%%%%%%%%%%%%%%%%%%%%%%%%%%%%%%%%%%%%%%%%%%%%%%%%%%%%
\section{Proof of Theorem~\ref{thm:faermann}, plain convergence~\eqref{eq:Faermann converges}}
\label{section:faermann}
%
%%%%%%%%%%%%%%%%%%%%%%%%%%%%%%%%%%%%%%%%%%%%%%%%%%%%%%%%%%%%%%%%%%%%%%%%%%%%%%%%%
%%%%%%%%%%%%%%%%%%%%%%%%%%%%%%%%%%%%%%%%%%%%%%%%%%%%%%%%%%%%%%%%%%%%%%%%%%%%%%%%
%
%%!TEX root = resigaconv.tex
To prove convergence of Algorithm~\ref{the algorithm} driven by the Faermann estimators $\eta_\ell$, we apply an abstract result of \cite[Section 2]{mitscha} which is recalled in the following:
Let $\HH$ be a Hilbert space with dual space $\HH^*$ and $V:\HH\to\HH^*$  be a linear elliptic operator and $f\in \HH^*$.
Let $(\XX_\ell(f))_{\ell\in\N_0}$ be a sequence of finite dimensional nested subspaces of $\HH$, i.e., $\XX_\ell(f)\subseteq\XX_{\ell+1}(f)$, with Galerkin approximations $\Phi_\ell(f)\in\XX_\ell(f)$ for the equation $V\phi=f$.
Further, let $(\NN_\ell(f))_{\ell\in\N_0}$ be a sequence of arbitrary finite sets and
\begin{align*}
\eta_\ell(f):=\eta_\ell(f,\NN_\ell(f))\text{ with }\eta_\ell(f,\EE_\ell):=\Big(\sum_{z\in\EE_\ell} \eta_\ell(f,z)^2\Big)^{1/2}<\infty \text{ for all }\EE_\ell\subseteq\NN_\ell(f)
\end{align*}
some heuristical error estimator, where we only require $\eta_\ell(f,z)\ge 0$ for each $z\in\NN_\ell(f)$.
Let $(\MM_\ell(f))_{\ell\in\N_0}$ be a sequence of marked elements with $\MM_\ell(f)\subseteq\NN_\ell(f)$ which satisfies the D\"orfler marking, i.e.,
\begin{align*}
\theta\eta_\ell(f)^2\le \eta_\ell(f,\MM_\ell(f))^2.
\end{align*}
Additionally let
\begin{align*}
\widetilde\rho_\ell(f):=\widetilde\rho_\ell(f,\NN_\ell(f))\text{ with }\widetilde\rho_\ell(f,\EE_\ell):=\Big(\sum_{z\in\EE_\ell} \widetilde\rho_\ell(f,z)^2\Big)^{1/2}<\infty \text{ for all }\EE_\ell\subseteq\NN_\ell(f)
\end{align*}
be an auxiliary error estimator with local contributions $\widetilde\rho_\ell(f,z)\ge 0$.
Then, there holds the following convergence result.
\begin{lemma}\label{lem:mitscha}
Suppose that  $D\subseteq \HH^*$ is a dense subset of $\HH^*$ such that for all $f\in D$ and all $\ell\in\N_0$ there is a set $\RR_\ell(f)\subseteq \MM_\ell(f)$ such that the following assumptions {\rm(A1)--(A3)} hold:
\begin{itemize}
\item[\rm(A1)] $\eta_\ell(f)$ is a local lower bound of $\widetilde\rho_\ell(f)$:
There is a constant $C_1>0$ such that
\begin{align*}
\eta_\ell(f,\MM_\ell(f))\le C_1 \widetilde\rho_\ell(f,\RR_\ell(f))\quad\text{for all }\ell\in\N_0.
\end{align*}
\item[\rm(A2)] $\widetilde\rho_\ell(f)$ is contractive on $\RR_\ell(f)$: There is a constant $C_2$ such that for all $\ell\in\N_0, m\in \N$ and all $\delta>0$, it holds
\begin{align*}
C_2^{-1}\widetilde\rho_\ell(f,\RR_\ell(f))^2\le \widetilde\rho_\ell(f)^2-\frac{1}{1+\delta}\, \widetilde\rho_{\ell+m}(f)^2+(1+\delta^{-1})C_2\norm{\Phi_{\ell+m}(f)-\Phi_\ell(f)}{\HH}^2.
\end{align*}
\end{itemize}
In addition, we suppose for all $f\in \HH^*$ validity of:
\begin{itemize}
\item[\rm(A3)]
$\eta_\ell$ is stable on $\MM_\ell(f)$ with respect to $f$: there is a constant $C_3>0$ such that for all $\ell\in\N_0$ and $f'\in\HH^*$, it holds
\begin{align*}
|\eta_\ell(f,\MM_\ell(f))-\eta_\ell(f',\MM_\ell(f))|\le C_3 \norm{f-f'}{\HH^*}.
\end{align*}
\end{itemize}
Then, there holds convergence $\lim_{\ell\to\infty}\eta_\ell=0$ for all $f\in\HH^*$.
\end{lemma}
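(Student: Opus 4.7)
My plan is a two-stage argument reflecting the structure of the assumptions: first establish $\eta_\ell(f)\to 0$ for every $f$ in the dense subset $D$ using (A1) and (A2) together with Galerkin properties, then extend to arbitrary $f\in\HH^*$ by a density argument driven by the data-stability (A3).

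\textbf{Step 1 ($f\in D$).} Since $V$ is elliptic and the spaces $\XX_\ell(f)$ are nested, the Pythagoras identity shows that $\Phi_\ell(f)$ is Cauchy in $\HH$, hence $c_\ell^{(m)}:=\norm{\Phi_{\ell+m}(f)-\Phi_\ell(f)}{\HH}^2\to 0$ as $\ell\to\infty$ uniformly in $m\in\N$. The D\"orfler marking and (A1) give $\sqrt\theta\,\eta_\ell(f)\le\eta_\ell(f,\MM_\ell(f))\le C_1\,\widetilde\rho_\ell(f,\RR_\ell(f))$, so it suffices to show that $\widetilde\rho_\ell(f,\RR_\ell(f))\to 0$. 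To this end, apply (A2) with $\ell=0$ and $\delta=1$, drop the nonnegative left-hand term, to obtain $\sup_\ell\widetilde\rho_\ell<\infty$. Writing $A=\limsup_\ell\widetilde\rho_\ell^2$ and $B=\liminf_\ell\widetilde\rho_\ell^2$, the consequence
\[
\widetilde\rho_{\ell_0+m}^2\le(1+\delta)\bigl(\widetilde\rho_{\ell_0}^2+(1+\delta^{-1})C_2\,c_{\ell_0}^{(m)}\bigr)
\]
of (A2), evaluated at an $\ell_0$ with $\widetilde\rho_{\ell_0}^2$ close to $B$ and then passed to $\limsup_m$, yields $A\le(1+\delta)(B+\varepsilon+o(1))$, so $A=B=:L$ after sending $\varepsilon,\delta\to 0$. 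Knowing $\widetilde\rho_\ell^2\to L$, write (A2) in the form
\[
C_2^{-1}\widetilde\rho_\ell(f,\RR_\ell(f))^2\le\widetilde\rho_\ell^2-\tfrac{1}{1+\delta}\widetilde\rho_{\ell+m}^2+(1+\delta^{-1})C_2\,c_\ell^{(m)},
\]
pick $m=m(\ell,\delta)$ large enough that $c_\ell^{(m)}$ is negligible, and send $\ell\to\infty$ and then $\delta\to 0$; the right-hand side collapses to $0$, yielding $\widetilde\rho_\ell(f,\RR_\ell(f))\to 0$ and thus $\eta_\ell(f)\to 0$.

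\textbf{Step 2 (density).} For $f\in\HH^*$ and $\varepsilon>0$, pick $f'\in D$ with $\norm{f-f'}{\HH^*}\le\varepsilon$. The D\"orfler marking for $f$ together with (A3) gives
\[
\sqrt\theta\,\eta_\ell(f)\le\eta_\ell(f,\MM_\ell(f))\le\eta_\ell(f',\MM_\ell(f))+C_3\,\varepsilon.
\]
The plan is then to show that $\eta_\ell(f',\MM_\ell(f))\to 0$ as $\ell\to\infty$ by applying the Step~1 argument to the data $f'\in D$ on the mesh sequence generated by the adaptive run for $f$, where (A1)--(A2) are required to remain usable for $f'\in D$. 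Taking $\limsup_\ell$ in the displayed inequality and then $\varepsilon\to 0$ finishes the proof.

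\textbf{Main obstacle.} The principal difficulty lies in Step~1: assumption (A2) only couples $\widetilde\rho_\ell^2$ to its far-off counterpart $\widetilde\rho_{\ell+m}^2$ and on first reading delivers only subsequence information. Promoting this to full-sequence convergence requires two consecutive limiting manoeuvres — first proving $\widetilde\rho_\ell^2$ is convergent (not just bounded), then proving the $\RR_\ell$-localized quantity tends to zero — both hinging on the fact that $m$ can be taken arbitrarily large in (A2) while the Galerkin-difference defect $c_\ell^{(m)}$ can be forced below any prescribed threshold using the uniform Cauchy property. A secondary subtlety arises in Step~2, where one must carefully interpret $\eta_\ell(f',\MM_\ell(f))$ as the estimator of data $f'$ on $f$'s adaptive mesh and verify that the abstract axioms continue to apply to this hybrid configuration; in the concrete Faermann setting this is compatible with the structure of the indicator because the $H^{1/2}$-seminorm contributions depend only on the datum and the local partition.
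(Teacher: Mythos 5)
The paper does not actually prove this lemma; it is quoted verbatim from \cite[Section~2]{mitscha}, so there is no internal proof to compare against. Evaluating your attempt on its own merits: the two-stage plan is the right one, and Step~1 is sound. The boundedness from (A2) with a fixed $\ell_0$ and dropped left-hand side, the $\limsup=\liminf$ argument to pin down a limit $L$, and the final passage $\delta\to 0$ to force $\widetilde\rho_\ell(f,\RR_\ell(f))\to 0$ are all correct (you can even take $m=1$ throughout since $\Phi_{\ell+1}-\Phi_\ell\to 0$ already follows from the Pythagoras identity and the monotone limit of $\norm{\phi-\Phi_\ell}{V}$).

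The gap is in Step~2, and you only half-acknowledge it. The phrase ``applying the Step~1 argument to the data $f'\in D$'' does not literally work: Step~1 opens with the D\"orfler marking $\sqrt\theta\,\eta_\ell(\cdot)\le\eta_\ell(\cdot,\MM_\ell)$, and that inequality only holds for the datum $f$ that actually drove the marking, not for $f'$. What you need is to \emph{bypass} D\"orfler in Step~2: after the chain
\begin{align*}
\sqrt\theta\,\eta_\ell(f)\le\eta_\ell(f,\MM_\ell(f))\le\eta_\ell(f',\MM_\ell(f))+C_3\norm{f-f'}{\HH^*},
\end{align*}
apply (A1) for $f'\in D$ directly, $\eta_\ell(f',\MM_\ell(f))\le C_1\,\widetilde\rho_\ell(f',\RR_\ell(f'))$, and then invoke only the \emph{second half} of Step~1 --- the (A2)-plus-Galerkin-Cauchy argument, which does not involve D\"orfler at all --- to get $\widetilde\rho_\ell(f',\RR_\ell(f'))\to 0$ on the fixed nested sequence $\XX_\ell$. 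This closes the argument cleanly. Your text instead passes through an intermediate claim $\eta_\ell(f')\to 0$ which is neither needed nor directly provable via the quoted axioms, because the marked sets were not chosen for $f'$. You do flag the hybrid configuration as a ``secondary subtlety'' and correctly observe that in the concrete Faermann setting the indicator depends only on the local mesh and the datum; but in an abstract proof of this abstract lemma, the remedy is the structural one above (use (A1) and the $\widetilde\rho$-decay independently, not D\"orfler for $f'$), and that step should be stated explicitly rather than deferred.
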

\begin{proof}[Proof of plain convergence~\eqref{eq:Faermann converges}] 
We choose $\HH=\H^{-1/2}(\Gamma)$, $\HH^*=H^{1/2}(\Gamma)$, $V$ the weakly-singular integral operator~\eqref{eq:strong}.
Moreover, Algorithm~\ref{the algorithm} generates the transformed NURBS spaces $\XX_\ell(f)$, the set of nodes $\NN_\ell(f)$, the Faermann estimator $\eta_\ell(f)$ and the set of marked nodes $\MM_\ell(f)$. 
We 
use  the mesh-size function $\widetilde{h}_\ell$ of Proposition~\ref{lem:mesh function} to define 
\begin{align}
\widetilde\rho_\ell(f,z):=\norm{\widetilde{h}_\ell^{1/2}\partial_\Gamma(f-V\Phi_\ell)}{L^2(\omega(z))}\quad\text{for all }z\in\NN_\ell,
\end{align}
if $f$ is in the dense set $D=H^1(\Gamma)$.
We aim to apply Lemma~\ref{lem:mitscha} and show in the following that{\rm(A1)--(A2)} hold for all $f\in H^1(\Gamma)$ even with $\RR_\ell(f)=\MM_\ell(f)$ and that  {\rm (A3)} holds for all $f\in H^{1/2}(\Gamma)$.
Then, Lemma~\ref{lem:mitscha} shows convergence~\eqref{eq:Faermann converges} of the Faermann estimator.

 {\rm (A1)} of Lemma~\ref{lem:mitscha} follows immediately from \cite[Theorem 4.3]{resigabem}, where the constant $C_1$ depends only on $\check{\kappa}_{\max}, p$, and $\gamma$. 
 
 {\rm(A3)} can be proved exactly as in \cite[Section~2.4]{mitscha} as $\eta_\ell$ is efficient (see \cite[Theorem 3.1]{igafaermann}) and has a semi-norm structure. 
The constant $C_3$ depends only on $\Gamma$.

The only challenging part is the proof of {\rm(A2)} for fixed $f\in H^1(\Gamma)$.
We proceed similarly as in the proof of \cite[Theorem 3.1]{mitscha}.
In the following, we  skip the dependence of $f$.
The heart of the matter are the estimates $\widetilde{h}_{\ell+1}\le \qctr \widetilde h_{\ell}$ on $\omega_\ell(\MM_\ell)$ and $\widetilde h_{\ell+1}\le \widetilde h_\ell$ on $\Gamma$, which follow from Proposition~\ref{lem:mesh function} and 
 \begin{align*}
 \MM_\ell\subset \bigcup([\TT_\ell]\setminus[\TT_{\ell+1}]).
 \end{align*}
This shows 
\begin{align*}
\widetilde h_\ell-\widetilde h_{\ell+m}\ge \widetilde h_\ell-\widetilde h_{\ell+1}\ge (1-\qctr)\widetilde h_\ell \,\chi_{\omega_\ell(\MM_\ell)}\quad\text{for all }\ell\in\N_0 \text{ and }m\in\N. 
\end{align*}
Hence, the estimator $\widetilde\rho_\ell$ satisfies
\begin{align*}
(1-\qctr)\,\widetilde\rho_\ell(\MM_\ell)^2/2&\le(1-\qctr)\int_{\omega_\ell(\MM_\ell)}\widetilde h_\ell\,|\partial_\Gamma (f-V\Phi_\ell)|^2 dx\\
&\le \int_\Gamma \widetilde h_\ell \,|\partial_\Gamma(f-V\Phi_\ell)|^2-\int_\Gamma\widetilde h_{\ell+m} \,|\partial_\Gamma(f-V\Phi_\ell)|^2 dx\\
&=\norm{\widetilde h_\ell^{\,1/2}\partial_\Gamma(f-V\Phi_\ell)}{L^2(\Gamma)}^2-\norm{\widetilde h_{\ell+m}^{\,1/2}\partial_\Gamma(f-V\Phi_{\ell})}{L^2(\Gamma)}^2.
\end{align*}
Here, the factor $1/2$ on the left-hand side stems from the fact that each node patch consists (generically) of two elements. 
This fact also shows $\norm{\widetilde h_\ell^{\,1/2}\partial_\Gamma(f-V\Phi_\ell)}{L^2(\Gamma)}^2=\widetilde\rho_\ell^{\,2}/2$.
The Young inequality $(c+d)^2\le (1+\delta)c^2+(1+\delta^{-1})d^2$ for $c,d\ge 0$, together with the triangle inequality shows
\begin{align*}
(1-\qctr)\,\widetilde\rho_\ell(\MM_\ell)^2/2\le\widetilde\rho_\ell^{\,2}/2-\frac{1}{1+\delta} \widetilde\rho_{\ell+m}^{\,2}/2+\frac{1+\delta^{-1}}{1+\delta}\norm{\widetilde h_{\ell+m}^{1/2}\partial_\Gamma V(\Phi_\ell-\Phi_{\ell+m})}{L^2(\Gamma)}^2.
\end{align*}
Finally, we use Proposition~\ref{prop:invest} and see
\begin{align*}
\norm{\widetilde h_{\ell+m}^{1/2}\partial_\Gamma V(\Phi_\ell-\Phi_{\ell+m})}{L^2(\Gamma)}\le\widetilde C_{\rm inv} \norm{\Phi_\ell-\Phi_{\ell+m}}{\H^{-1/2}(\Gamma)}.
\end{align*}
with a constant $\widetilde C_{\rm inv}>0$ which depends only on
$\Cinv$ and $h_\star\simeq \widetilde h_\star$.
This yields
\begin{align*}
(1-\qctr)\,\widetilde\rho_\ell(\MM_\ell)^2/2\le \widetilde\rho_\ell^{\,2}/2-\frac{1}{1+\delta}\,\widetilde\rho_{\ell+m}^{\,2}/2+\frac{1+\delta^{-1}}{1+\delta}\widetilde  C_{\rm inv}^2\norm{\Phi_\ell-\Phi_{\ell+m}}{\H^{-1/2}(\Gamma)}^2.
\end{align*}
{and concludes the proof of {\rm (A2)} with $C_2=\max(\frac{1}{1-\qctr},2 C_{\rm inv}^2)$}.
\end{proof}
%\newpage

%%%%%%%%%%%%%%%%%%%%%%%%%%%%%%%%%%%%%%%%%%%%%%%%%%%%%%%%%%%%%%%%%%%%%%%%%%%%%%%%%
%%%%%%%%%%%%%%%%%%%%%%%%%%%%%%%%%%%%%%%%%%%%%%%%%%%%%%%%%%%%%%%%%%%%%%%%%%%%%%%%

\bigskip

\noindent
{\bf Acknowledgement.}
The authors acknowledge support through the Austrian Science 
Fund (FWF) under grant P27005 \emph{Optimal adaptivity for BEM and FEM-BEM coupling}. 
In addition, DP,  MF, and GG are supported through the FWF doctoral school \emph{Nonlinear PDEs} 
funded under grant W1245.
%\newpage
%\nocite{*}
\bibliographystyle{alpha}
\bibliography{literature}

\end{document}